\journal{{\tt arXiv.org}}
\definecolor{greenyellow}   {cmyk}{0.15, 0   , 0.69, 0   }
\definecolor{yellow}        {cmyk}{0   , 0   , 1   , 0   }
\definecolor{goldenrod}     {cmyk}{0   , 0.10, 0.84, 0   }
\definecolor{dandelion}     {cmyk}{0   , 0.29, 0.84, 0   }
\definecolor{apricot}       {cmyk}{0   , 0.32, 0.52, 0   }
\definecolor{peach}         {cmyk}{0   , 0.50, 0.70, 0   }
\definecolor{melon}         {cmyk}{0   , 0.46, 0.50, 0   }
\definecolor{yelloworange}  {cmyk}{0   , 0.42, 1   , 0   }
\definecolor{orange}        {cmyk}{0   , 0.61, 0.87, 0   }
\definecolor{burntorange}   {cmyk}{0   , 0.51, 1   , 0   }
\definecolor{bittersweet}   {cmyk}{0   , 0.75, 1   , 0.24}
\definecolor{redorange}     {cmyk}{0   , 0.77, 0.87, 0   }
\definecolor{mahogany}      {cmyk}{0   , 0.85, 0.87, 0.35}
\definecolor{maroon}        {cmyk}{0   , 0.87, 0.68, 0.32}
\definecolor{brickred}      {cmyk}{0   , 0.89, 0.94, 0.28}
\definecolor{red}           {cmyk}{0   , 1   , 1   , 0   }
\definecolor{orangered}     {cmyk}{0   , 1   , 0.50, 0   }
\definecolor{rubinered}     {cmyk}{0   , 1   , 0.13, 0   }
\definecolor{wildstrawberry}{cmyk}{0   , 0.96, 0.39, 0   }
\definecolor{salmon}        {cmyk}{0   , 0.53, 0.38, 0   }
\definecolor{carnationpink} {cmyk}{0   , 0.63, 0   , 0   }
\definecolor{magenta}       {cmyk}{0   , 1   , 0   , 0   }
\definecolor{violetred}     {cmyk}{0   , 0.81, 0   , 0   }
\definecolor{rhodamine}     {cmyk}{0   , 0.82, 0   , 0   }
\definecolor{mulberry}      {cmyk}{0.34, 0.90, 0   , 0.02}
\definecolor{redviolet}     {cmyk}{0.07, 0.90, 0   , 0.34}
\definecolor{fuchsia}       {cmyk}{0.47, 0.91, 0   , 0.08}
\definecolor{lavender}      {cmyk}{0   , 0.48, 0   , 0   }
\definecolor{thistle}       {cmyk}{0.12, 0.59, 0   , 0   }
\definecolor{orchid}        {cmyk}{0.32, 0.64, 0   , 0   }
\definecolor{darkorchid}    {cmyk}{0.40, 0.80, 0.20, 0   }
\definecolor{purple}        {cmyk}{0.45, 0.86, 0   , 0   }
\definecolor{plum}          {cmyk}{0.50, 1   , 0   , 0   }
\definecolor{violet}        {cmyk}{0.79, 0.88, 0   , 0   }
\definecolor{royalpurple}   {cmyk}{0.75, 0.90, 0   , 0   }
\definecolor{blueviolet}    {cmyk}{0.86, 0.91, 0   , 0.04}
\definecolor{periwinkle}    {cmyk}{0.57, 0.55, 0   , 0   }
\definecolor{cadetblue}     {cmyk}{0.62, 0.57, 0.23, 0   }
\definecolor{cornflowerblue}{cmyk}{0.65, 0.13, 0   , 0   }
\definecolor{midnightblue}  {cmyk}{0.98, 0.13, 0   , 0.43}
\definecolor{navyblue}      {cmyk}{0.94, 0.54, 0   , 0   }
\definecolor{royalblue}     {cmyk}{1   , 0.50, 0   , 0   }
\definecolor{blue}          {cmyk}{1   , 1   , 0   , 0   }
\definecolor{cerulean}      {cmyk}{0.94, 0.11, 0   , 0   }
\definecolor{cyan}          {cmyk}{1   , 0   , 0   , 0   }
\definecolor{processblue}   {cmyk}{0.96, 0   , 0   , 0   }
\definecolor{skyblue}       {cmyk}{0.62, 0   , 0.12, 0   }
\definecolor{turquoise}     {cmyk}{0.85, 0   , 0.20, 0   }
\definecolor{tealblue}      {cmyk}{0.86, 0   , 0.34, 0.02}
\definecolor{aquamarine}    {cmyk}{0.82, 0   , 0.30, 0   }
\definecolor{bluegreen}     {cmyk}{0.85, 0   , 0.33, 0   }
\definecolor{emerald}       {cmyk}{1   , 0   , 0.50, 0   }
\definecolor{junglegreen}   {cmyk}{0.99, 0   , 0.52, 0   }
\definecolor{seagreen}      {cmyk}{0.69, 0   , 0.50, 0   }
\definecolor{green}         {cmyk}{1   , 0   , 1   , 0   }
\definecolor{forestgreen}   {cmyk}{0.91, 0   , 0.88, 0.12}
\definecolor{pinegreen}     {cmyk}{0.92, 0   , 0.59, 0.25}
\definecolor{limegreen}     {cmyk}{0.50, 0   , 1   , 0   }
\definecolor{yellowgreen}   {cmyk}{0.44, 0   , 0.74, 0   }
\definecolor{springgreen}   {cmyk}{0.26, 0   , 0.76, 0   }
\definecolor{olivegreen}    {cmyk}{0.64, 0   , 0.95, 0.40}
\definecolor{rawsienna}     {cmyk}{0   , 0.72, 1   , 0.45}
\definecolor{sepia}         {cmyk}{0   , 0.83, 1   , 0.70}
\definecolor{brown}         {cmyk}{0   , 0.81, 1   , 0.60}
\definecolor{tan}           {cmyk}{0.14, 0.42, 0.56, 0   }
\definecolor{gray}          {cmyk}{0   , 0   , 0   , 0.50}
\definecolor{black}         {cmyk}{0   , 0   , 0   , 1   }
\definecolor{white}         {cmyk}{0   , 0   , 0   , 0   }
\pgfplotsset{compat=newest}
\newcommand{\externaltikz}[2]{\includegraphics{#1}} 
\newtheorem{theorem}{Theorem}[section]
\newtheorem{definition}[theorem]{Definition}
\newtheorem{remark}[theorem]{Remark}
\newtheorem{lemma}[theorem]{Lemma}
\numberwithin{equation}{section}
\newcommand{\bdm}{\begin{displaymath}}
\newcommand{\edm}{\end{displaymath}}
\newcommand{\beq}{\begin{equation}}
\newcommand{\eeq}{\end{equation}}
\newcommand{\beqa}{\begin{eqnarray}}
\newcommand{\eeqa}{\end{eqnarray}}
\title{First-order continuous- and discontinuous-Galerkin moment models for a linear kinetic
	equation: realizability-preserving splitting scheme and numerical analysis}
\author[fs]{Florian Schneider}
\address[fs]{Fachbereich Mathematik, TU Kaiserslautern, Erwin-Schr\"odinger-Str., 67663 Kaiserslautern, Germany, {\tt schneider@mathematik.uni-kl.de}}
\author[tl]{Tobias Leibner\fnref{fn1}}
\address[tl]{Fachbereich Mathematik und Informatik, WWU M\"unster, Einsteinstrasse 62, 48149 M\"unster, {\tt tobias.leibner@uni-muenster.de}}
\date{}
  \def\fnref#1{}%
\newcommand*{\Cpp}{C\nolinebreak[4]\hspace{-.05em}\raisebox{.4ex}{\relsize{-3}{\textbf{++}}}\ }
\def\quand{\quad \mbox{and} \quad}
\newcommand*{\secref}[1]{Section~\ref{#1}}
\newcommand*{\defnref}[1]{Definition~\ref{#1}}
\newcommand*{\figref}[1]{Figure~\ref{#1}}
\newcommand*{\tabref}[1]{Table~\ref{#1}}
\DeclareMathOperator{\minmod}{minmod}
\DeclareMathOperator{\argmin}{argmin}
\DeclareMathOperator{\matexp}{\mathbf{exp}}
\newcommand*{\closure}[1]{\operatorname{cl}\left(#1\right)}
\newcommand*{\convexhull}[1]{\ensuremath{\operatorname{conv}\left(#1\right)}}
\newcommand*{\distancebd}{\ensuremath{d}}
\newcommand*{\floor}[1]{\ensuremath{\left\lfloor #1 \right\rfloor}}
\newcommand*{\interior}[1]{\ensuremath{\operatorname{int}\left(#1\right)}}
\newcommand*{\minmodfunc}[1]{\ensuremath{\minmod\left(#1\right)}}
\newcommand*{\outernormal}{\ensuremath{\Vn}}
\newcommand*{\indicator}[1][\checkerboardmaterialindex]{\ensuremath{\mathbbm{1}_{#1}}}
\providecommand\given{}
\newcommand\SetSymbol[1][]{%
  \nonscript\:#1\vert
  \allowbreak
  \nonscript\:
  \mathopen{}}
\DeclarePairedDelimiterX\Set[1]\{\}{%
\renewcommand\given{\,\SetSymbol[\delimsize]\,}
\,#1\,
}
\newcommand*{\R}{\mathbb{R}}
\newcommand*{\Rpos}{\R^{+}}
\newcommand*{\Lp}[1]{L^{#1}}
\newcommand*{\sphere}[1][2]{\ensuremath{\mathcal{S}^{#1}}}
\newcommand*{\abs}[1]{\ensuremath{\left\vert #1 \right\vert}}
\newcommand*{\norm}[2]{\ensuremath{\left\lVert #1 \right\rVert_{#2}}}
\newcommand*{\eyematrix}{\ensuremath{\MI}}
\newcommand*{\gridindex}{\ensuremath{i}}
\newcommand*{\gridindexalt}{\ensuremath{j}}
\newcommand*{\cellindex}{\gridindex}
\newcommand*{\gridsize}{\ensuremath{I}}
\newcommand*{\timeindex}{\ensuremath{\kappa}}
\newcommand*{\dimension}{\ensuremath{d}}
\newcommand*{\momentindex}{\ensuremath{l}} 
\newcommand*{\momentindexvar}{\ensuremath{m}} 
\newcommand*{\basisindex}{\momentindex} 
\newcommand*{\basisindexvar}{\momentindexvar} 
\newcommand*{\momentorder}{\ensuremath{N}} 
\newcommand*{\momentnumber}{\ensuremath{n}} 
\newcommand*{\altvariable}[1]{\tilde{#1}} 
\newcommand*{\dx}[1][]{\ensuremath{\partial_{\x_{#1}}}}
\newcommand*{\dy}{\ensuremath{\partial_{\y}}}
\newcommand*{\dz}{\ensuremath{\partial_{\z}}}
\newcommand*{\dt}{\ensuremath{\partial_\timevar}}
\newcommand*{\spatialGradient}{\ensuremath{\nabla_\spatialvar}}
\newcommand*{\jacobianop}{\MD}
\newcommand*{\multipliersGradient}{\nabla_{\multipliers}}
\newcommand*{\x}{\ensuremath{x}}
\newcommand*{\y}{\ensuremath{y}}
\newcommand*{\z}{\ensuremath{z}}
\newcommand*{\domain}{\ensuremath{X}}
\newcommand*{\spatialvar}{\ensuremath{\Vx}}
\newcommand*{\gridwidth}{\ensuremath{\Delta\x}}
\newcommand*{\gridwidthx}{\ensuremath{\Delta\x}}
\newcommand*{\gridwidthy}{\ensuremath{\Delta\y}}
\newcommand*{\gridwidthz}{\ensuremath{\Delta\z}}
\newcommand*{\cell}[1]{\ensuremath{I_{#1}}}
\newcommand*{\ccell}[1]{\cell{{#1}}}
\newcommand*{\cellmean}[2][\cellindex]{\ensuremath{\overline{#2}_{#1}}}
\newcommand*{\timevar}{\ensuremath{t}} 
\newcommand*{\timeint}{\ensuremath{T}} 
\newcommand*{\timestep}{\ensuremath{\Delta\timevar}}
\newcommand*{\rkstages}{\ensuremath{s}}
\newcommand*{\tf}{\ensuremath{t_f}} 
  \def\command@factory#1{%
    \expandafter\def\csname V#1\endcsname{\mathbf{#1}}
  }
\def\greekvectors#1{%
\@for\next:=#1\do{%
\def\X##1;{%
\expandafter\def\csname V##1\endcsname{\boldsymbol{\csname##1\endcsname}}
}
\expandafter\X\next;
}
}
  \def\command@factory#1{%
    \expandafter\def\csname M#1\endcsname{\mathbf{#1}}
  }
\def\greekmatrices#1{%
\@for\next:=#1\do{%
\def\X##1;{%
\expandafter\def\csname M##1\endcsname{\boldsymbol{\csname##1\endcsname}}
}
\expandafter\X\next;
}
}
\newcommand*{\imaginaryunit}{\mathrm{i}}
\newcommand*{\Source}{\ensuremath{\Vs}}
\newcommand*{\collisionop}{\ensuremath{\mathcal{C}}}
\newcommand*{\collision}[1]{\ensuremath{\collisionop\left(#1\right)}}
\newcommand*{\collisionkernel}{\ensuremath{K}}
\newcommand{\SHl}{{\ensuremath{l}}} 
\newcommand{\SHm}{{\ensuremath{m}}} 
\newcommand{\Slmmod}[2]{{\ensuremath{S_{#1}^{#2}}}} 
\newcommand{\Slm}{\Slmmod{\SHl}{\SHm}} 
\newcommand*{\zL}{\ensuremath{\z_{L}}} 
\newcommand*{\zR}{\ensuremath{\z_{R}}} 
\newcommand*{\SC}{\ensuremath{\boldsymbol{\Omega}}} 
\newcommand*{\SCx}{\ensuremath{\Omega_\x}} 
\newcommand*{\SCy}{\ensuremath{\Omega_\y}} 
\newcommand*{\SCz}{\ensuremath{\Omega_\z}} 
\newcommand*{\SCheight}{\ensuremath{\mu}}
\newcommand*{\SCangle}{\ensuremath{\varphi}}
\newcommand*{\Flux}{\ensuremath{\Vf}}
\newcommand*{\numericalflux}{\ensuremath{\Vg}}
\newcommand*{\numericalFlux}{\numericalflux}
\newcommand*{\kineticflux}{\numericalflux^{kin}}
\newcommand*{\kineticFlux}{\kineticflux}
\newcommand*{\viscosityconstant}{\ensuremath{C}}
\newcommand*{\limitervariable}{\ensuremath{\theta}}
\newcommand*{\limitereps}{\ensuremath{\varepsilon_{\mathcal{R}}}}
\newcommand*{\limiterepstilde}{\ensuremath{\tilde{\varepsilon}}}
\newcommand*{\eigenvectormatrix}{\ensuremath{\MV}}
\newcommand*{\eigenvalue}{\ensuremath{\lambda}}
\newcommand*{\scattering}{\ensuremath{\sigma_s}}
\newcommand*{\absorption}{\ensuremath{\sigma_a}}
\newcommand*{\crosssection}{\ensuremath{\sigma_t}}
\newcommand*{\source}{\ensuremath{Q}}
\newcommand*{\moments}[1][ ]{\ensuremath{\Vu_{#1}}} 
\newcommand*{\momentspv}[1]{\ensuremath{\moments[#1]}}
\newcommand*{\momentscellmean}[1]{\ensuremath{\cellmean[#1]{\moments}}} 
\newcommand*{\momentcompcellmean}[1]{\ensuremath{\cellmean[#1]{\momentcomp{}}}}
\newcommand*{\momentschar}{\ensuremath{\Vu^{\text{c}}}} 
\newcommand*{\momentscharcomp}[1][\basisindex]{\ensuremath{u^{\text{c}}_{#1}}}
\newcommand*{\momentscharmeancomp}[1]{\ensuremath{\cellmean[#1]{u}^{\text{c}}}}
\newcommand*{\momentstime}[2]{\moments[#1]^{\left(#2\right)}}
\newcommand*{\momentslimited}[2][\limitervariable]{\moments[#2]^{#1}}
\newcommand*{\isotropicmoment}{\moments[\text{iso}]}
\newcommand*{\momentcomp}[1]{\ensuremath{u_{#1}}}
\newcommand*{\normalizedmoments}[1][ ]{\ensuremath{\Vphi_{#1}}}
\newcommand*{\normalizedmomentcomp}[1]{\ensuremath{\phi_{#1}}}
\newcommand*{\multipliers}[1][ ]{\Valpha_{#1}}
\newcommand*{\numericalmultipliers}{\widetilde{\Valpha}}
\newcommand*{\multipliersone}[1][\basis]{\Valpha_{#1}^{\mathbbm{1}}}
\newcommand*{\density}{\ensuremath{\rho}} 
\newcommand*{\densityvacuum}{\ensuremath{\rho}_{vac}}
\newcommand*{\densityMult}{\ensuremath{\varrho}} 
\newcommand*{\distribution}[1][ ]{\ensuremath{\psi_{#1}}}
\newcommand*{\distributiontzero}{\ensuremath{\distribution[\timevar=0]}}
\newcommand*{\distributionboundary}{\ensuremath{\distribution[b]}}
\newcommand*{\distributionvacuum}{\ensuremath{\distribution[\text{vac}]}}
\newcommand*{\basis}[1][ ]{{\ensuremath{\Vb_{#1}}}} 
\newcommand*{\basiscomp}[1][\basisindex]{\ensuremath{b_{#1}}}
\newcommand*{\fmbasis}[1][\momentorder]{\ensuremath{\Vf_{#1}}}
\newcommand*{\pmbasis}[1][\momentnumber]{\ensuremath{\Vp_{#1}}}
\newcommand*{\hfbasis}[1][\momentnumber]{\ensuremath{\Vh_{#1}}}
\newcommand*{\hfbasiscomp}[1][\basisindex]{\ensuremath{h_{#1}}}
\newcommand*{\PN}[1][\momentorder]{\ensuremath{\text{P}_{#1}}}
\newcommand*{\Mn}[1][\momentorder]{\ensuremath{\text{M}_{#1}}}
\newcommand*{\MN}[1][\momentorder]{\ensuremath{\text{M}_{#1}}}
\newcommand*{\HFMN}[1][\momentnumber]{\ensuremath{\text{HFM}_{#1}}}
\newcommand*{\HFPN}[1][\momentnumber]{\ensuremath{\text{HFP}_{#1}}}
\newcommand*{\PMMN}[1][\momentnumber]{\ensuremath{\text{PMM}_{#1}}}
\newcommand*{\PMPN}[1][\momentnumber]{\ensuremath{\text{PMP}_{#1}}}
\newcommand*{\RD}[2]{\ensuremath{\mathcal{R}_{#1}^{#2}}}
\newcommand*{\RDpos}[1]{\ensuremath{\mathcal{R}^{+}_{#1}}}
\newcommand*{\RQ}[1]{\RD{#1}{\mathcal{Q}}}
\newcommand*{\RQeps}[1]{\RD{#1}{\mathcal{Q},\limitereps}}
\newcommand*{\RQone}[1]{\left. \RQ{#1} \right|_{\density \leq 1}}
\newcommand*{\RQonehalfspaceveccomp}{d}
\newcommand*{\RQonehalfspacematcol}{\Vc}
\newcommand*{\facetindex}{i}
\newcommand*{\hankelhalfind}{k}
\newcommand*{\angularDomain}{\ensuremath{V}}
\newcommand*{\angularQuadrature}{\ensuremath{\mathcal{Q}}}
\newcommand*{\angularQuadratureFunction}{\ensuremath{f}}
\newcommand*{\angularQuadratureIndex}{\ensuremath{i}}
\newcommand*{\angularQuadratureNumber}[1][\angularQuadrature]{\ensuremath{{n_{#1}}}}
\newcommand*{\angularQuadratureNodesS}[1]{\ensuremath{\SC_{#1}}}
\newcommand*{\angularQuadratureWeights}[1]{\ensuremath{w_{#1}}}
\newcommand*{\ints}[1]{\ensuremath{\left<#1\right>}} 
\newcommand*{\intA}[2]{\ensuremath{\left<#1\right>_{#2}}}
\newcommand*{\intQ}[1]{\ensuremath{\intA{#1}{\angularQuadrature}}} 
\newcommand*{\intp}[1]{\intA{#1}{+}}
\newcommand*{\intm}[1]{\intA{#1}{-}}
\newcommand*{\ld}[1]{\ensuremath{{#1}_*}} 
\newcommand*{\entropy}{\ensuremath{\eta}} 
\newcommand*{\entropyFunctional}{\ensuremath{\mathcal{H}}} 
\newcommand*{\ansatz}[1][ ]{\ensuremath{\hat{\psi}_{#1}}}
\newcommand*{\opttol}{\ensuremath{\tau}}
\newcommand*{\opttolmod}{\ensuremath{\tau'}}
\newcommand*{\opttoleps}{\ensuremath{{\varepsilon_{\gamma}}}}
\newcommand*{\optObjective}{p}
\newcommand*{\optGradient}{\ensuremath{\Vg}}
\newcommand*{\optHessian}{\ensuremath{\MH}}
\newcommand*{\newtonDirection}{\ensuremath{\Vd}}
\newcommand*{\regularizationParameter}[1][ ]{\ensuremath{r_{#1}}}
\newcommand*{\regularizedmoments}[1][ ]{\ensuremath{\moments[#1]^{\regularizationParameter}}}
\newcommand*{\multipliersmod}{\ensuremath{\Vbeta'}}
\newcommand*{\multiplierstilde}{\ensuremath{\Vbeta}}
\newcommand*{\isomatrix}{\ensuremath{\MG}}
\newcommand*{\optNewtonDirection}{\newtonDirection}
\newcommand*{\velocityHessian}{\MJ}
\DeclareFontFamily{U}{tipa}{}
\DeclareFontShape{U}{tipa}{m}{n}{<->tipa10}{}
\newcommand*{\arc@char}{{\usefont{U}{tipa}{m}{n}\symbol{62}}}%
\newcommand*{\arc}[1]{\mathpalette\arc@arc{#1}}
\newcommand*{\arc@arc}[2]{%
  \sbox0{$\m@th#1#2$}%
  \vbox{
    \hbox{\resizebox{\wd0}{\height}{\arc@char}}
    \nointerlineskip
    \box0
  }%
}
\newcommand*{\TriangulationSphere}{\ensuremath{\mathcal{T}_h}}
\newcommand*{\sphericaltriangle}{\ensuremath{\arc{K}}}
\newcommand*{\refinementnumber}{\ensuremath{r}} 
\newcommand*{\nvertex}{\ensuremath{n_v}}
\newcommand*{\nentity}{\ensuremath{n_t}}
\newcommand*{\generalpartition}{\ensuremath{\mathcal{P}}}
\newcommand*{\numfacets}{\ensuremath{n_{f}}} 
\definecolor{greenyellow}   {cmyk}{0.15, 0   , 0.69, 0   }
\definecolor{yellow}        {cmyk}{0   , 0   , 1   , 0   }
\definecolor{goldenrod}     {cmyk}{0   , 0.10, 0.84, 0   }
\definecolor{dandelion}     {cmyk}{0   , 0.29, 0.84, 0   }
\definecolor{apricot}       {cmyk}{0   , 0.32, 0.52, 0   }
\definecolor{peach}         {cmyk}{0   , 0.50, 0.70, 0   }
\definecolor{melon}         {cmyk}{0   , 0.46, 0.50, 0   }
\definecolor{yelloworange}  {cmyk}{0   , 0.42, 1   , 0   }
\definecolor{orange}        {cmyk}{0   , 0.61, 0.87, 0   }
\definecolor{burntorange}   {cmyk}{0   , 0.51, 1   , 0   }
\definecolor{bittersweet}   {cmyk}{0   , 0.75, 1   , 0.24}
\definecolor{redorange}     {cmyk}{0   , 0.77, 0.87, 0   }
\definecolor{mahogany}      {cmyk}{0   , 0.85, 0.87, 0.35}
\definecolor{maroon}        {cmyk}{0   , 0.87, 0.68, 0.32}
\definecolor{brickred}      {cmyk}{0   , 0.89, 0.94, 0.28}
\definecolor{red}           {cmyk}{0   , 1   , 1   , 0   }
\definecolor{orangered}     {cmyk}{0   , 1   , 0.50, 0   }
\definecolor{rubinered}     {cmyk}{0   , 1   , 0.13, 0   }
\definecolor{wildstrawberry}{cmyk}{0   , 0.96, 0.39, 0   }
\definecolor{salmon}        {cmyk}{0   , 0.53, 0.38, 0   }
\definecolor{carnationpink} {cmyk}{0   , 0.63, 0   , 0   }
\definecolor{magenta}       {cmyk}{0   , 1   , 0   , 0   }
\definecolor{violetred}     {cmyk}{0   , 0.81, 0   , 0   }
\definecolor{rhodamine}     {cmyk}{0   , 0.82, 0   , 0   }
\definecolor{mulberry}      {cmyk}{0.34, 0.90, 0   , 0.02}
\definecolor{redviolet}     {cmyk}{0.07, 0.90, 0   , 0.34}
\definecolor{fuchsia}       {cmyk}{0.47, 0.91, 0   , 0.08}
\definecolor{lavender}      {cmyk}{0   , 0.48, 0   , 0   }
\definecolor{thistle}       {cmyk}{0.12, 0.59, 0   , 0   }
\definecolor{orchid}        {cmyk}{0.32, 0.64, 0   , 0   }
\definecolor{darkorchid}    {cmyk}{0.40, 0.80, 0.20, 0   }
\definecolor{purple}        {cmyk}{0.45, 0.86, 0   , 0   }
\definecolor{plum}          {cmyk}{0.50, 1   , 0   , 0   }
\definecolor{violet}        {cmyk}{0.79, 0.88, 0   , 0   }
\definecolor{royalpurple}   {cmyk}{0.75, 0.90, 0   , 0   }
\definecolor{blueviolet}    {cmyk}{0.86, 0.91, 0   , 0.04}
\definecolor{periwinkle}    {cmyk}{0.57, 0.55, 0   , 0   }
\definecolor{cadetblue}     {cmyk}{0.62, 0.57, 0.23, 0   }
\definecolor{cornflowerblue}{cmyk}{0.65, 0.13, 0   , 0   }
\definecolor{midnightblue}  {cmyk}{0.98, 0.13, 0   , 0.43}
\definecolor{navyblue}      {cmyk}{0.94, 0.54, 0   , 0   }
\definecolor{royalblue}     {cmyk}{1   , 0.50, 0   , 0   }
\definecolor{blue}          {cmyk}{1   , 1   , 0   , 0   }
\definecolor{cerulean}      {cmyk}{0.94, 0.11, 0   , 0   }
\definecolor{cyan}          {cmyk}{1   , 0   , 0   , 0   }
\definecolor{processblue}   {cmyk}{0.96, 0   , 0   , 0   }
\definecolor{skyblue}       {cmyk}{0.62, 0   , 0.12, 0   }
\definecolor{turquoise}     {cmyk}{0.85, 0   , 0.20, 0   }
\definecolor{tealblue}      {cmyk}{0.86, 0   , 0.34, 0.02}
\definecolor{aquamarine}    {cmyk}{0.82, 0   , 0.30, 0   }
\definecolor{bluegreen}     {cmyk}{0.85, 0   , 0.33, 0   }
\definecolor{emerald}       {cmyk}{1   , 0   , 0.50, 0   }
\definecolor{junglegreen}   {cmyk}{0.99, 0   , 0.52, 0   }
\definecolor{seagreen}      {cmyk}{0.69, 0   , 0.50, 0   }
\definecolor{green}         {cmyk}{1   , 0   , 1   , 0   }
\definecolor{forestgreen}   {cmyk}{0.91, 0   , 0.88, 0.12}
\definecolor{pinegreen}     {cmyk}{0.92, 0   , 0.59, 0.25}
\definecolor{limegreen}     {cmyk}{0.50, 0   , 1   , 0   }
\definecolor{yellowgreen}   {cmyk}{0.44, 0   , 0.74, 0   }
\definecolor{springgreen}   {cmyk}{0.26, 0   , 0.76, 0   }
\definecolor{olivegreen}    {cmyk}{0.64, 0   , 0.95, 0.40}
\definecolor{rawsienna}     {cmyk}{0   , 0.72, 1   , 0.45}
\definecolor{sepia}         {cmyk}{0   , 0.83, 1   , 0.70}
\definecolor{brown}         {cmyk}{0   , 0.81, 1   , 0.60}
\definecolor{tan}           {cmyk}{0.14, 0.42, 0.56, 0   }
\definecolor{gray}          {cmyk}{0   , 0   , 0   , 0.50}
\definecolor{black}         {cmyk}{0   , 0   , 0   , 1   }
\definecolor{white}         {cmyk}{0   , 0   , 0   , 0   }
\pgfplotsset{
  discard if not/.style 2 args={
      x filter/.code={
          \edef\tempa{\thisrow{#1}}
          \edef\tempb{#2}
          \ifx\tempa\tempb
          \else
            
          \fi
        }
    }
}
\newlength{\figureheight}
\newlength{\figurewidth}
\pgfplotsset{%
  ,compat=1.11
  ,colormap={parula}{%
      rgb(0pt)=(0.2081,0.1663,0.5292);
      rgb(1pt)=(0.208355,0.16778,0.532238);
      rgb(2pt)=(0.208611,0.169261,0.535275);
      rgb(3pt)=(0.208866,0.170741,0.538313);
      rgb(4pt)=(0.209121,0.172222,0.54135);
      rgb(5pt)=(0.209376,0.173702,0.544388);
      rgb(6pt)=(0.209632,0.175183,0.547425);
      rgb(7pt)=(0.209887,0.176663,0.550463);
      rgb(8pt)=(0.210134,0.178144,0.553505);
      rgb(9pt)=(0.210338,0.179624,0.556568);
      rgb(10pt)=(0.210542,0.181105,0.559631);
      rgb(11pt)=(0.210746,0.182585,0.562694);
      rgb(12pt)=(0.210944,0.184066,0.565763);
      rgb(13pt)=(0.211123,0.185546,0.568852);
      rgb(14pt)=(0.211302,0.187027,0.57194);
      rgb(15pt)=(0.21148,0.188507,0.575029);
      rgb(16pt)=(0.211642,0.189996,0.578117);
      rgb(17pt)=(0.21177,0.191502,0.581206);
      rgb(18pt)=(0.211897,0.193008,0.584295);
      rgb(19pt)=(0.212025,0.194514,0.587383);
      rgb(20pt)=(0.212132,0.19602,0.590472);
      rgb(21pt)=(0.212208,0.197526,0.59356);
      rgb(22pt)=(0.212285,0.199032,0.596649);
      rgb(23pt)=(0.212361,0.200538,0.599738);
      rgb(24pt)=(0.212413,0.202044,0.602839);
      rgb(25pt)=(0.212438,0.20355,0.605953);
      rgb(26pt)=(0.212464,0.205056,0.609067);
      rgb(27pt)=(0.212489,0.206562,0.612181);
      rgb(28pt)=(0.212471,0.208083,0.61531);
      rgb(29pt)=(0.21242,0.209614,0.61845);
      rgb(30pt)=(0.212368,0.211146,0.621589);
      rgb(31pt)=(0.212317,0.212677,0.624729);
      rgb(32pt)=(0.212216,0.214209,0.627868);
      rgb(33pt)=(0.212088,0.215741,0.631008);
      rgb(34pt)=(0.211961,0.217272,0.634148);
      rgb(35pt)=(0.211833,0.218804,0.637287);
      rgb(36pt)=(0.211668,0.220354,0.640446);
      rgb(37pt)=(0.211489,0.221911,0.643611);
      rgb(38pt)=(0.21131,0.223468,0.646776);
      rgb(39pt)=(0.211132,0.225025,0.649941);
      rgb(40pt)=(0.210848,0.226603,0.653107);
      rgb(41pt)=(0.210541,0.228186,0.656272);
      rgb(42pt)=(0.210235,0.229768,0.659437);
      rgb(43pt)=(0.209929,0.231351,0.662602);
      rgb(44pt)=(0.209553,0.232934,0.665767);
      rgb(45pt)=(0.20917,0.234516,0.668932);
      rgb(46pt)=(0.208787,0.236099,0.672098);
      rgb(47pt)=(0.208405,0.237681,0.675263);
      rgb(48pt)=(0.20787,0.239289,0.678453);
      rgb(49pt)=(0.207334,0.240897,0.681644);
      rgb(50pt)=(0.206798,0.242505,0.684835);
      rgb(51pt)=(0.206255,0.244114,0.688025);
      rgb(52pt)=(0.205617,0.245722,0.691216);
      rgb(53pt)=(0.204979,0.24733,0.694407);
      rgb(54pt)=(0.204341,0.248938,0.697597);
      rgb(55pt)=(0.203675,0.250554,0.700792);
      rgb(56pt)=(0.202858,0.252213,0.704008);
      rgb(57pt)=(0.202041,0.253872,0.707224);
      rgb(58pt)=(0.201225,0.255531,0.710441);
      rgb(59pt)=(0.200372,0.257184,0.713657);
      rgb(60pt)=(0.199402,0.258818,0.716873);
      rgb(61pt)=(0.198432,0.260452,0.720089);
      rgb(62pt)=(0.197462,0.262085,0.723305);
      rgb(63pt)=(0.196419,0.263735,0.726522);
      rgb(64pt)=(0.195219,0.26542,0.729738);
      rgb(65pt)=(0.19402,0.267105,0.732954);
      rgb(66pt)=(0.19282,0.268789,0.73617);
      rgb(67pt)=(0.191549,0.270474,0.739386);
      rgb(68pt)=(0.19017,0.272159,0.742603);
      rgb(69pt)=(0.188792,0.273843,0.745819);
      rgb(70pt)=(0.187414,0.275528,0.749035);
      rgb(71pt)=(0.1859,0.277237,0.752264);
      rgb(72pt)=(0.184241,0.278973,0.755505);
      rgb(73pt)=(0.182581,0.280709,0.758747);
      rgb(74pt)=(0.180922,0.282444,0.761989);
      rgb(75pt)=(0.179133,0.284209,0.765245);
      rgb(76pt)=(0.177244,0.285996,0.768512);
      rgb(77pt)=(0.175356,0.287783,0.77178);
      rgb(78pt)=(0.173467,0.289569,0.775047);
      rgb(79pt)=(0.171363,0.291406,0.778314);
      rgb(80pt)=(0.169142,0.293269,0.781581);
      rgb(81pt)=(0.166922,0.295132,0.784849);
      rgb(82pt)=(0.164701,0.296996,0.788116);
      rgb(83pt)=(0.162238,0.298934,0.791365);
      rgb(84pt)=(0.159686,0.300899,0.794606);
      rgb(85pt)=(0.157133,0.302865,0.797848);
      rgb(86pt)=(0.15458,0.30483,0.80109);
      rgb(87pt)=(0.151738,0.306858,0.804352);
      rgb(88pt)=(0.148828,0.3089,0.80762);
      rgb(89pt)=(0.145918,0.310942,0.810887);
      rgb(90pt)=(0.143008,0.312984,0.814154);
      rgb(91pt)=(0.139687,0.31514,0.81733);
      rgb(92pt)=(0.136318,0.31731,0.820495);
      rgb(93pt)=(0.132949,0.319479,0.82366);
      rgb(94pt)=(0.129579,0.321649,0.826826);
      rgb(95pt)=(0.125811,0.323918,0.829841);
      rgb(96pt)=(0.122033,0.32619,0.832853);
      rgb(97pt)=(0.118256,0.328462,0.835865);
      rgb(98pt)=(0.114458,0.330737,0.838862);
      rgb(99pt)=(0.110349,0.333059,0.841619);
      rgb(100pt)=(0.106239,0.335382,0.844376);
      rgb(101pt)=(0.102129,0.337705,0.847132);
      rgb(102pt)=(0.0979874,0.340021,0.849835);
      rgb(103pt)=(0.093648,0.342292,0.852209);
      rgb(104pt)=(0.0893087,0.344564,0.854583);
      rgb(105pt)=(0.0849694,0.346836,0.856957);
      rgb(106pt)=(0.08063,0.349091,0.859234);
      rgb(107pt)=(0.0762907,0.351286,0.861174);
      rgb(108pt)=(0.0719514,0.353481,0.863114);
      rgb(109pt)=(0.067612,0.355676,0.865053);
      rgb(110pt)=(0.0633195,0.357817,0.866853);
      rgb(111pt)=(0.0591333,0.359833,0.868333);
      rgb(112pt)=(0.0549471,0.36185,0.869814);
      rgb(113pt)=(0.050761,0.363866,0.871294);
      rgb(114pt)=(0.0466838,0.365823,0.872626);
      rgb(115pt)=(0.0427784,0.367687,0.873724);
      rgb(116pt)=(0.038873,0.36955,0.874821);
      rgb(117pt)=(0.0349676,0.371414,0.875919);
      rgb(118pt)=(0.0315066,0.373217,0.876872);
      rgb(119pt)=(0.0285456,0.374953,0.877664);
      rgb(120pt)=(0.0255847,0.376688,0.878455);
      rgb(121pt)=(0.0226237,0.378424,0.879246);
      rgb(122pt)=(0.0202132,0.380061,0.879868);
      rgb(123pt)=(0.0182477,0.381618,0.880353);
      rgb(124pt)=(0.0162823,0.383175,0.880838);
      rgb(125pt)=(0.0143168,0.384732,0.881323);
      rgb(126pt)=(0.0127892,0.386241,0.881695);
      rgb(127pt)=(0.0115129,0.387721,0.882001);
      rgb(128pt)=(0.0102366,0.389202,0.882307);
      rgb(129pt)=(0.00896036,0.390682,0.882614);
      rgb(130pt)=(0.00812372,0.392089,0.88281);
      rgb(131pt)=(0.00746006,0.393468,0.882963);
      rgb(132pt)=(0.0067964,0.394846,0.883116);
      rgb(133pt)=(0.00613273,0.396224,0.883269);
      rgb(134pt)=(0.00581622,0.397562,0.88332);
      rgb(135pt)=(0.00558649,0.398889,0.883346);
      rgb(136pt)=(0.00535676,0.400217,0.883371);
      rgb(137pt)=(0.00512703,0.401544,0.883397);
      rgb(138pt)=(0.00516757,0.402804,0.883332);
      rgb(139pt)=(0.00524414,0.404054,0.883256);
      rgb(140pt)=(0.00532072,0.405305,0.883179);
      rgb(141pt)=(0.0053973,0.406556,0.883103);
      rgb(142pt)=(0.00572012,0.407757,0.882952);
      rgb(143pt)=(0.00605195,0.408957,0.882799);
      rgb(144pt)=(0.00638378,0.410157,0.882646);
      rgb(145pt)=(0.00672643,0.411355,0.882489);
      rgb(146pt)=(0.00728799,0.412529,0.882259);
      rgb(147pt)=(0.00784955,0.413704,0.88203);
      rgb(148pt)=(0.00841111,0.414878,0.8818);
      rgb(149pt)=(0.00898919,0.416045,0.881564);
      rgb(150pt)=(0.00967838,0.417168,0.881283);
      rgb(151pt)=(0.0103676,0.418292,0.881002);
      rgb(152pt)=(0.0110568,0.419415,0.880721);
      rgb(153pt)=(0.011773,0.420532,0.880435);
      rgb(154pt)=(0.0125898,0.42163,0.880129);
      rgb(155pt)=(0.0134066,0.422728,0.879823);
      rgb(156pt)=(0.0142234,0.423825,0.879516);
      rgb(157pt)=(0.0150703,0.424915,0.879195);
      rgb(158pt)=(0.0159892,0.425987,0.878838);
      rgb(159pt)=(0.0169081,0.427059,0.87848);
      rgb(160pt)=(0.017827,0.428132,0.878123);
      rgb(161pt)=(0.0187748,0.429194,0.877746);
      rgb(162pt)=(0.0197703,0.430241,0.877338);
      rgb(163pt)=(0.0207658,0.431287,0.876929);
      rgb(164pt)=(0.0217613,0.432334,0.876521);
      rgb(165pt)=(0.0227802,0.43338,0.876113);
      rgb(166pt)=(0.0238267,0.434427,0.875704);
      rgb(167pt)=(0.0248733,0.435473,0.875296);
      rgb(168pt)=(0.0259198,0.43652,0.874887);
      rgb(169pt)=(0.0269802,0.437553,0.874451);
      rgb(170pt)=(0.0280523,0.438574,0.873992);
      rgb(171pt)=(0.0291243,0.439595,0.873532);
      rgb(172pt)=(0.0301964,0.440616,0.873073);
      rgb(173pt)=(0.0312844,0.441621,0.872614);
      rgb(174pt)=(0.032382,0.442616,0.872154);
      rgb(175pt)=(0.0334796,0.443612,0.871695);
      rgb(176pt)=(0.0345772,0.444607,0.871235);
      rgb(177pt)=(0.0357108,0.445603,0.870758);
      rgb(178pt)=(0.0368595,0.446598,0.870273);
      rgb(179pt)=(0.0380081,0.447594,0.869788);
      rgb(180pt)=(0.0391568,0.448589,0.869303);
      rgb(181pt)=(0.0402652,0.449565,0.868798);
      rgb(182pt)=(0.0413628,0.450535,0.868287);
      rgb(183pt)=(0.0424604,0.451505,0.867777);
      rgb(184pt)=(0.043558,0.452474,0.867266);
      rgb(185pt)=(0.0445889,0.453444,0.866756);
      rgb(186pt)=(0.0456099,0.454414,0.866245);
      rgb(187pt)=(0.0466309,0.455384,0.865735);
      rgb(188pt)=(0.047652,0.456354,0.865224);
      rgb(189pt)=(0.0486,0.457324,0.864714);
      rgb(190pt)=(0.0495444,0.458294,0.864203);
      rgb(191pt)=(0.0504889,0.459264,0.863692);
      rgb(192pt)=(0.0514315,0.460234,0.863181);
      rgb(193pt)=(0.0523249,0.461204,0.862645);
      rgb(194pt)=(0.0532183,0.462174,0.862109);
      rgb(195pt)=(0.0541117,0.463144,0.861573);
      rgb(196pt)=(0.0549991,0.464111,0.861034);
      rgb(197pt)=(0.0558414,0.465056,0.860472);
      rgb(198pt)=(0.0566838,0.466,0.859911);
      rgb(199pt)=(0.0575261,0.466944,0.859349);
      rgb(200pt)=(0.0583532,0.467889,0.858793);
      rgb(201pt)=(0.0591189,0.468833,0.858257);
      rgb(202pt)=(0.0598847,0.469778,0.857721);
      rgb(203pt)=(0.0606505,0.470722,0.857185);
      rgb(204pt)=(0.0614018,0.471667,0.856641);
      rgb(205pt)=(0.0621165,0.472611,0.85608);
      rgb(206pt)=(0.0628312,0.473556,0.855518);
      rgb(207pt)=(0.0635459,0.4745,0.854957);
      rgb(208pt)=(0.064242,0.475444,0.854405);
      rgb(209pt)=(0.0649057,0.476389,0.853868);
      rgb(210pt)=(0.0655694,0.477333,0.853332);
      rgb(211pt)=(0.066233,0.478278,0.852796);
      rgb(212pt)=(0.0668625,0.479222,0.852249);
      rgb(213pt)=(0.0674495,0.480167,0.851687);
      rgb(214pt)=(0.0680366,0.481111,0.851126);
      rgb(215pt)=(0.0686237,0.482056,0.850564);
      rgb(216pt)=(0.0691838,0.483,0.850003);
      rgb(217pt)=(0.0697198,0.483944,0.849441);
      rgb(218pt)=(0.0702559,0.484889,0.84888);
      rgb(219pt)=(0.0707919,0.485833,0.848318);
      rgb(220pt)=(0.0712967,0.486778,0.847772);
      rgb(221pt)=(0.0717817,0.487722,0.847236);
      rgb(222pt)=(0.0722667,0.488667,0.8467);
      rgb(223pt)=(0.0727517,0.489611,0.846164);
      rgb(224pt)=(0.0732012,0.490573,0.845628);
      rgb(225pt)=(0.0736351,0.491543,0.845092);
      rgb(226pt)=(0.0740691,0.492513,0.844556);
      rgb(227pt)=(0.074503,0.493483,0.84402);
      rgb(228pt)=(0.0748973,0.494433,0.843484);
      rgb(229pt)=(0.0752802,0.495378,0.842948);
      rgb(230pt)=(0.0756631,0.496322,0.842412);
      rgb(231pt)=(0.0760459,0.497267,0.841876);
      rgb(232pt)=(0.0763631,0.498233,0.841362);
      rgb(233pt)=(0.0766694,0.499203,0.840851);
      rgb(234pt)=(0.0769757,0.500173,0.840341);
      rgb(235pt)=(0.077282,0.501143,0.83983);
      rgb(236pt)=(0.0775162,0.502137,0.83932);
      rgb(237pt)=(0.0777459,0.503132,0.838809);
      rgb(238pt)=(0.0779757,0.504128,0.838298);
      rgb(239pt)=(0.0782042,0.505123,0.837789);
      rgb(240pt)=(0.0783829,0.506093,0.837304);
      rgb(241pt)=(0.0785616,0.507063,0.836819);
      rgb(242pt)=(0.0787402,0.508033,0.836334);
      rgb(243pt)=(0.0789135,0.509008,0.835851);
      rgb(244pt)=(0.0790411,0.510029,0.835392);
      rgb(245pt)=(0.0791688,0.51105,0.834932);
      rgb(246pt)=(0.0792964,0.512071,0.834473);
      rgb(247pt)=(0.0794048,0.513092,0.834018);
      rgb(248pt)=(0.0794303,0.514113,0.833584);
      rgb(249pt)=(0.0794559,0.515134,0.83315);
      rgb(250pt)=(0.0794814,0.516155,0.832717);
      rgb(251pt)=(0.0794862,0.517183,0.832289);
      rgb(252pt)=(0.0794351,0.51823,0.831881);
      rgb(253pt)=(0.0793841,0.519276,0.831473);
      rgb(254pt)=(0.079333,0.520323,0.831064);
      rgb(255pt)=(0.079255,0.521369,0.830665);
      rgb(256pt)=(0.0791273,0.522416,0.830282);
      rgb(257pt)=(0.0789997,0.523462,0.829899);
      rgb(258pt)=(0.0788721,0.524509,0.829516);
      rgb(259pt)=(0.0786889,0.525589,0.829156);
      rgb(260pt)=(0.0784336,0.526712,0.828824);
      rgb(261pt)=(0.0781784,0.527835,0.828492);
      rgb(262pt)=(0.0779231,0.528958,0.82816);
      rgb(263pt)=(0.077615,0.530081,0.827868);
      rgb(264pt)=(0.0772577,0.531205,0.827613);
      rgb(265pt)=(0.0769003,0.532328,0.827357);
      rgb(266pt)=(0.0765429,0.533451,0.827102);
      rgb(267pt)=(0.0761243,0.534589,0.826862);
      rgb(268pt)=(0.0756649,0.535738,0.826632);
      rgb(269pt)=(0.0752054,0.536886,0.826403);
      rgb(270pt)=(0.0747459,0.538035,0.826173);
      rgb(271pt)=(0.0742168,0.539219,0.825961);
      rgb(272pt)=(0.0736553,0.540418,0.825756);
      rgb(273pt)=(0.0730937,0.541618,0.825552);
      rgb(274pt)=(0.0725321,0.542818,0.825348);
      rgb(275pt)=(0.0718925,0.544037,0.825183);
      rgb(276pt)=(0.0712288,0.545262,0.82503);
      rgb(277pt)=(0.0705652,0.546487,0.824877);
      rgb(278pt)=(0.0699015,0.547713,0.824723);
      rgb(279pt)=(0.0691514,0.548938,0.824614);
      rgb(280pt)=(0.0683856,0.550163,0.824511);
      rgb(281pt)=(0.0676198,0.551388,0.824409);
      rgb(282pt)=(0.0668541,0.552614,0.824307);
      rgb(283pt)=(0.0660408,0.553886,0.824205);
      rgb(284pt)=(0.065224,0.555162,0.824103);
      rgb(285pt)=(0.0644072,0.556439,0.824001);
      rgb(286pt)=(0.0635892,0.557715,0.823899);
      rgb(287pt)=(0.0626703,0.558991,0.823848);
      rgb(288pt)=(0.0617514,0.560268,0.823797);
      rgb(289pt)=(0.0608324,0.561544,0.823746);
      rgb(290pt)=(0.0599087,0.56282,0.823693);
      rgb(291pt)=(0.0589387,0.564096,0.823616);
      rgb(292pt)=(0.0579688,0.565373,0.82354);
      rgb(293pt)=(0.0569988,0.566649,0.823463);
      rgb(294pt)=(0.0560243,0.567925,0.823386);
      rgb(295pt)=(0.0550288,0.569202,0.82331);
      rgb(296pt)=(0.0540333,0.570478,0.823233);
      rgb(297pt)=(0.0530378,0.571754,0.823157);
      rgb(298pt)=(0.0520423,0.57303,0.82308);
      rgb(299pt)=(0.0510468,0.574307,0.823004);
      rgb(300pt)=(0.0500514,0.575583,0.822927);
      rgb(301pt)=(0.0490559,0.576859,0.82285);
      rgb(302pt)=(0.0480604,0.578127,0.822756);
      rgb(303pt)=(0.0470649,0.579377,0.822629);
      rgb(304pt)=(0.0460694,0.580628,0.822501);
      rgb(305pt)=(0.0450739,0.581879,0.822374);
      rgb(306pt)=(0.0441,0.583119,0.822235);
      rgb(307pt)=(0.0431556,0.584344,0.822082);
      rgb(308pt)=(0.0422111,0.585569,0.821929);
      rgb(309pt)=(0.0412667,0.586795,0.821776);
      rgb(310pt)=(0.0403351,0.58802,0.821597);
      rgb(311pt)=(0.0394162,0.589245,0.821392);
      rgb(312pt)=(0.0384973,0.59047,0.821188);
      rgb(313pt)=(0.0375784,0.591695,0.820984);
      rgb(314pt)=(0.0367495,0.592891,0.820735);
      rgb(315pt)=(0.0359838,0.594065,0.820454);
      rgb(316pt)=(0.035218,0.595239,0.820173);
      rgb(317pt)=(0.0344523,0.596413,0.819892);
      rgb(318pt)=(0.0337721,0.597553,0.819595);
      rgb(319pt)=(0.0331339,0.598676,0.819288);
      rgb(320pt)=(0.0324958,0.599799,0.818982);
      rgb(321pt)=(0.0318577,0.600923,0.818676);
      rgb(322pt)=(0.0312964,0.602026,0.818312);
      rgb(323pt)=(0.0307604,0.603124,0.817929);
      rgb(324pt)=(0.0302243,0.604222,0.817546);
      rgb(325pt)=(0.0296883,0.605319,0.817163);
      rgb(326pt)=(0.0292375,0.606395,0.816738);
      rgb(327pt)=(0.0288036,0.607468,0.816304);
      rgb(328pt)=(0.0283697,0.60854,0.81587);
      rgb(329pt)=(0.0279357,0.609612,0.815436);
      rgb(330pt)=(0.0275721,0.610637,0.814955);
      rgb(331pt)=(0.0272147,0.611658,0.81447);
      rgb(332pt)=(0.0268574,0.612679,0.813985);
      rgb(333pt)=(0.0265,0.6137,0.8135);
      rgb(334pt)=(0.0262447,0.614695,0.812964);
      rgb(335pt)=(0.0259895,0.615691,0.812428);
      rgb(336pt)=(0.0257342,0.616686,0.811892);
      rgb(337pt)=(0.0254853,0.61768,0.811352);
      rgb(338pt)=(0.0253066,0.61865,0.810765);
      rgb(339pt)=(0.0251279,0.61962,0.810177);
      rgb(340pt)=(0.0249492,0.62059,0.80959);
      rgb(341pt)=(0.024779,0.621551,0.808995);
      rgb(342pt)=(0.0246514,0.62247,0.808357);
      rgb(343pt)=(0.0245237,0.623389,0.807719);
      rgb(344pt)=(0.0243961,0.624308,0.80708);
      rgb(345pt)=(0.0242748,0.625221,0.80643);
      rgb(346pt)=(0.0241727,0.626114,0.805741);
      rgb(347pt)=(0.0240706,0.627008,0.805051);
      rgb(348pt)=(0.0239685,0.627901,0.804362);
      rgb(349pt)=(0.0238832,0.628786,0.803656);
      rgb(350pt)=(0.0238321,0.629654,0.802916);
      rgb(351pt)=(0.0237811,0.630522,0.802176);
      rgb(352pt)=(0.02373,0.631389,0.801435);
      rgb(353pt)=(0.023679,0.632247,0.800685);
      rgb(354pt)=(0.0236279,0.633089,0.799919);
      rgb(355pt)=(0.0235769,0.633932,0.799153);
      rgb(356pt)=(0.0235258,0.634774,0.798387);
      rgb(357pt)=(0.0234748,0.635604,0.797596);
      rgb(358pt)=(0.0234237,0.63642,0.79678);
      rgb(359pt)=(0.0233727,0.637237,0.795963);
      rgb(360pt)=(0.0233216,0.638054,0.795146);
      rgb(361pt)=(0.0232706,0.638856,0.794329);
      rgb(362pt)=(0.0232195,0.639647,0.793512);
      rgb(363pt)=(0.0231685,0.640439,0.792695);
      rgb(364pt)=(0.0231174,0.64123,0.791879);
      rgb(365pt)=(0.0230832,0.642005,0.791011);
      rgb(366pt)=(0.0230577,0.64277,0.790118);
      rgb(367pt)=(0.0230321,0.643536,0.789225);
      rgb(368pt)=(0.0230066,0.644302,0.788331);
      rgb(369pt)=(0.0229811,0.645049,0.787438);
      rgb(370pt)=(0.0229556,0.645789,0.786544);
      rgb(371pt)=(0.02293,0.646529,0.785651);
      rgb(372pt)=(0.0229045,0.647269,0.784758);
      rgb(373pt)=(0.022858,0.64801,0.783843);
      rgb(374pt)=(0.0228069,0.64875,0.782924);
      rgb(375pt)=(0.0227559,0.64949,0.782005);
      rgb(376pt)=(0.0227048,0.65023,0.781086);
      rgb(377pt)=(0.0227,0.650947,0.780144);
      rgb(378pt)=(0.0227,0.651662,0.7792);
      rgb(379pt)=(0.0227,0.652377,0.778256);
      rgb(380pt)=(0.0227,0.653092,0.777311);
      rgb(381pt)=(0.0228261,0.653781,0.776341);
      rgb(382pt)=(0.0229538,0.65447,0.775371);
      rgb(383pt)=(0.0230814,0.655159,0.774402);
      rgb(384pt)=(0.0232108,0.655849,0.77343);
      rgb(385pt)=(0.023364,0.656538,0.772434);
      rgb(386pt)=(0.0235171,0.657227,0.771439);
      rgb(387pt)=(0.0236703,0.657916,0.770443);
      rgb(388pt)=(0.0238312,0.658602,0.769444);
      rgb(389pt)=(0.0240354,0.659265,0.768423);
      rgb(390pt)=(0.0242396,0.659929,0.767402);
      rgb(391pt)=(0.0244438,0.660592,0.766381);
      rgb(392pt)=(0.0247021,0.661256,0.765354);
      rgb(393pt)=(0.025136,0.66192,0.764307);
      rgb(394pt)=(0.02557,0.662583,0.763261);
      rgb(395pt)=(0.0260039,0.663247,0.762214);
      rgb(396pt)=(0.0264541,0.663911,0.761168);
      rgb(397pt)=(0.026939,0.664574,0.760121);
      rgb(398pt)=(0.027424,0.665238,0.759074);
      rgb(399pt)=(0.027909,0.665902,0.758028);
      rgb(400pt)=(0.028445,0.666555,0.756971);
      rgb(401pt)=(0.0290577,0.667193,0.755899);
      rgb(402pt)=(0.0296703,0.667832,0.754827);
      rgb(403pt)=(0.0302829,0.66847,0.753755);
      rgb(404pt)=(0.030994,0.669095,0.752683);
      rgb(405pt)=(0.0318108,0.669708,0.751611);
      rgb(406pt)=(0.0326276,0.670321,0.750539);
      rgb(407pt)=(0.0334444,0.670933,0.749467);
      rgb(408pt)=(0.0343045,0.67156,0.748366);
      rgb(409pt)=(0.0351979,0.672198,0.747243);
      rgb(410pt)=(0.0360913,0.672837,0.74612);
      rgb(411pt)=(0.0369847,0.673475,0.744996);
      rgb(412pt)=(0.0380432,0.674096,0.743873);
      rgb(413pt)=(0.0391919,0.674709,0.74275);
      rgb(414pt)=(0.0403405,0.675322,0.741627);
      rgb(415pt)=(0.0414892,0.675934,0.740504);
      rgb(416pt)=(0.0427123,0.676528,0.739381);
      rgb(417pt)=(0.0439631,0.677115,0.738258);
      rgb(418pt)=(0.0452138,0.677702,0.737135);
      rgb(419pt)=(0.0464646,0.678289,0.736011);
      rgb(420pt)=(0.0477153,0.678897,0.734868);
      rgb(421pt)=(0.0489661,0.67951,0.733719);
      rgb(422pt)=(0.0502168,0.680123,0.73257);
      rgb(423pt)=(0.0514676,0.680735,0.731422);
      rgb(424pt)=(0.0529237,0.681325,0.73025);
      rgb(425pt)=(0.0544042,0.681912,0.729076);
      rgb(426pt)=(0.0558847,0.682499,0.727902);
      rgb(427pt)=(0.0573652,0.683086,0.726728);
      rgb(428pt)=(0.0587709,0.683673,0.725553);
      rgb(429pt)=(0.0601748,0.68426,0.724379);
      rgb(430pt)=(0.0615787,0.684847,0.723205);
      rgb(431pt)=(0.0629946,0.685435,0.722028);
      rgb(432pt)=(0.0646027,0.686022,0.720803);
      rgb(433pt)=(0.0662108,0.686609,0.719577);
      rgb(434pt)=(0.0678189,0.687196,0.718352);
      rgb(435pt)=(0.069427,0.687779,0.717131);
      rgb(436pt)=(0.0710351,0.688341,0.715931);
      rgb(437pt)=(0.0726432,0.688902,0.714731);
      rgb(438pt)=(0.0742514,0.689464,0.713532);
      rgb(439pt)=(0.0758709,0.690026,0.712326);
      rgb(440pt)=(0.07753,0.690587,0.711101);
      rgb(441pt)=(0.0791892,0.691149,0.709876);
      rgb(442pt)=(0.0808483,0.69171,0.70865);
      rgb(443pt)=(0.0825387,0.692272,0.707417);
      rgb(444pt)=(0.0843,0.692833,0.706167);
      rgb(445pt)=(0.0860613,0.693395,0.704916);
      rgb(446pt)=(0.0878225,0.693956,0.703665);
      rgb(447pt)=(0.089564,0.694518,0.702405);
      rgb(448pt)=(0.0912742,0.69508,0.701128);
      rgb(449pt)=(0.0929844,0.695641,0.699852);
      rgb(450pt)=(0.0946946,0.696203,0.698576);
      rgb(451pt)=(0.0965009,0.696752,0.697299);
      rgb(452pt)=(0.0984153,0.697288,0.696023);
      rgb(453pt)=(0.10033,0.697824,0.694747);
      rgb(454pt)=(0.102244,0.69836,0.693471);
      rgb(455pt)=(0.10413,0.698896,0.69218);
      rgb(456pt)=(0.105994,0.699432,0.690878);
      rgb(457pt)=(0.107857,0.699968,0.689577);
      rgb(458pt)=(0.10972,0.700505,0.688275);
      rgb(459pt)=(0.111632,0.701041,0.686973);
      rgb(460pt)=(0.113572,0.701577,0.685671);
      rgb(461pt)=(0.115512,0.702113,0.684369);
      rgb(462pt)=(0.117452,0.702649,0.683068);
      rgb(463pt)=(0.119429,0.703185,0.681747);
      rgb(464pt)=(0.12142,0.703721,0.68042);
      rgb(465pt)=(0.123411,0.704257,0.679093);
      rgb(466pt)=(0.125402,0.704793,0.677765);
      rgb(467pt)=(0.127372,0.705308,0.676438);
      rgb(468pt)=(0.129338,0.705819,0.675111);
      rgb(469pt)=(0.131303,0.706329,0.673783);
      rgb(470pt)=(0.133269,0.70684,0.672456);
      rgb(471pt)=(0.135369,0.70735,0.671084);
      rgb(472pt)=(0.137488,0.707861,0.669705);
      rgb(473pt)=(0.139607,0.708371,0.668327);
      rgb(474pt)=(0.141725,0.708882,0.666949);
      rgb(475pt)=(0.143795,0.709392,0.665595);
      rgb(476pt)=(0.145862,0.709903,0.664242);
      rgb(477pt)=(0.14793,0.710414,0.662889);
      rgb(478pt)=(0.150003,0.710924,0.661534);
      rgb(479pt)=(0.152198,0.711435,0.66013);
      rgb(480pt)=(0.154394,0.711945,0.658726);
      rgb(481pt)=(0.156589,0.712456,0.657322);
      rgb(482pt)=(0.158784,0.712963,0.655922);
      rgb(483pt)=(0.160979,0.713448,0.654543);
      rgb(484pt)=(0.163174,0.713933,0.653165);
      rgb(485pt)=(0.16537,0.714418,0.651786);
      rgb(486pt)=(0.16757,0.714908,0.650397);
      rgb(487pt)=(0.169791,0.715419,0.648968);
      rgb(488pt)=(0.172012,0.715929,0.647538);
      rgb(489pt)=(0.174232,0.71644,0.646109);
      rgb(490pt)=(0.176483,0.716935,0.64468);
      rgb(491pt)=(0.178806,0.717395,0.64325);
      rgb(492pt)=(0.181129,0.717854,0.641821);
      rgb(493pt)=(0.183452,0.718314,0.640391);
      rgb(494pt)=(0.185755,0.718783,0.638952);
      rgb(495pt)=(0.188027,0.719268,0.637497);
      rgb(496pt)=(0.190299,0.719753,0.636042);
      rgb(497pt)=(0.192571,0.720238,0.634587);
      rgb(498pt)=(0.194913,0.720711,0.633132);
      rgb(499pt)=(0.197338,0.72117,0.631677);
      rgb(500pt)=(0.199762,0.72163,0.630223);
      rgb(501pt)=(0.202187,0.722089,0.628768);
      rgb(502pt)=(0.204612,0.722549,0.627299);
      rgb(503pt)=(0.207037,0.723008,0.625818);
      rgb(504pt)=(0.209462,0.723468,0.624338);
      rgb(505pt)=(0.211887,0.723927,0.622857);
      rgb(506pt)=(0.214328,0.724386,0.621377);
      rgb(507pt)=(0.216778,0.724846,0.619896);
      rgb(508pt)=(0.219229,0.725305,0.618416);
      rgb(509pt)=(0.221679,0.725765,0.616935);
      rgb(510pt)=(0.224202,0.726188,0.615455);
      rgb(511pt)=(0.226754,0.726597,0.613974);
      rgb(512pt)=(0.229307,0.727005,0.612494);
      rgb(513pt)=(0.231859,0.727414,0.611014);
      rgb(514pt)=(0.234392,0.727842,0.609513);
      rgb(515pt)=(0.236919,0.728276,0.608007);
      rgb(516pt)=(0.239446,0.72871,0.606501);
      rgb(517pt)=(0.241973,0.729144,0.604995);
      rgb(518pt)=(0.244611,0.729556,0.603467);
      rgb(519pt)=(0.247266,0.729964,0.601935);
      rgb(520pt)=(0.24992,0.730372,0.600404);
      rgb(521pt)=(0.252575,0.730781,0.598872);
      rgb(522pt)=(0.25523,0.731189,0.597365);
      rgb(523pt)=(0.257884,0.731598,0.595859);
      rgb(524pt)=(0.260539,0.732006,0.594353);
      rgb(525pt)=(0.263194,0.732414,0.592846);
      rgb(526pt)=(0.265848,0.732796,0.591314);
      rgb(527pt)=(0.268503,0.733179,0.589783);
      rgb(528pt)=(0.271158,0.733562,0.588251);
      rgb(529pt)=(0.27383,0.733945,0.58672);
      rgb(530pt)=(0.276638,0.734328,0.585188);
      rgb(531pt)=(0.279446,0.734711,0.583657);
      rgb(532pt)=(0.282254,0.735094,0.582125);
      rgb(533pt)=(0.285051,0.735471,0.580594);
      rgb(534pt)=(0.287808,0.735829,0.579062);
      rgb(535pt)=(0.290565,0.736186,0.577531);
      rgb(536pt)=(0.293322,0.736544,0.575999);
      rgb(537pt)=(0.2961,0.736894,0.574468);
      rgb(538pt)=(0.298933,0.737226,0.572936);
      rgb(539pt)=(0.301767,0.737557,0.571405);
      rgb(540pt)=(0.3046,0.737889,0.569873);
      rgb(541pt)=(0.307452,0.738221,0.568351);
      rgb(542pt)=(0.310336,0.738553,0.566845);
      rgb(543pt)=(0.313221,0.738885,0.565339);
      rgb(544pt)=(0.316105,0.739217,0.563833);
      rgb(545pt)=(0.318978,0.739537,0.562315);
      rgb(546pt)=(0.321837,0.739843,0.560784);
      rgb(547pt)=(0.324696,0.74015,0.559252);
      rgb(548pt)=(0.327555,0.740456,0.557721);
      rgb(549pt)=(0.330468,0.740749,0.556216);
      rgb(550pt)=(0.333429,0.741029,0.554736);
      rgb(551pt)=(0.336389,0.74131,0.553255);
      rgb(552pt)=(0.33935,0.741591,0.551775);
      rgb(553pt)=(0.342296,0.741872,0.550279);
      rgb(554pt)=(0.345231,0.742153,0.548773);
      rgb(555pt)=(0.348167,0.742433,0.547267);
      rgb(556pt)=(0.351102,0.742714,0.545761);
      rgb(557pt)=(0.354038,0.742977,0.54429);
      rgb(558pt)=(0.356973,0.743232,0.542835);
      rgb(559pt)=(0.359908,0.743488,0.54138);
      rgb(560pt)=(0.362844,0.743743,0.539925);
      rgb(561pt)=(0.365839,0.743959,0.53847);
      rgb(562pt)=(0.368851,0.744163,0.537015);
      rgb(563pt)=(0.371863,0.744367,0.53556);
      rgb(564pt)=(0.374875,0.744571,0.534105);
      rgb(565pt)=(0.377843,0.744775,0.532672);
      rgb(566pt)=(0.380804,0.74498,0.531243);
      rgb(567pt)=(0.383765,0.745184,0.529814);
      rgb(568pt)=(0.386726,0.745388,0.528384);
      rgb(569pt)=(0.389711,0.745568,0.527003);
      rgb(570pt)=(0.392697,0.745747,0.525624);
      rgb(571pt)=(0.395684,0.745926,0.524246);
      rgb(572pt)=(0.39867,0.746104,0.522868);
      rgb(573pt)=(0.401657,0.746257,0.521489);
      rgb(574pt)=(0.404643,0.74641,0.520111);
      rgb(575pt)=(0.40763,0.746563,0.518732);
      rgb(576pt)=(0.410611,0.746716,0.517359);
      rgb(577pt)=(0.413546,0.746869,0.516032);
      rgb(578pt)=(0.416482,0.747023,0.514705);
      rgb(579pt)=(0.419417,0.747176,0.513377);
      rgb(580pt)=(0.422357,0.747319,0.512055);
      rgb(581pt)=(0.425318,0.747421,0.510753);
      rgb(582pt)=(0.428279,0.747523,0.509451);
      rgb(583pt)=(0.43124,0.747626,0.50815);
      rgb(584pt)=(0.43418,0.747735,0.506848);
      rgb(585pt)=(0.437065,0.747862,0.505546);
      rgb(586pt)=(0.439949,0.74799,0.504244);
      rgb(587pt)=(0.442834,0.748117,0.502942);
      rgb(588pt)=(0.445727,0.748227,0.501659);
      rgb(589pt)=(0.448637,0.748304,0.500408);
      rgb(590pt)=(0.451547,0.74838,0.499157);
      rgb(591pt)=(0.454457,0.748457,0.497906);
      rgb(592pt)=(0.457333,0.748522,0.496667);
      rgb(593pt)=(0.460167,0.748573,0.495441);
      rgb(594pt)=(0.463,0.748624,0.494216);
      rgb(595pt)=(0.465833,0.748675,0.492991);
      rgb(596pt)=(0.468667,0.748726,0.491779);
      rgb(597pt)=(0.4715,0.748777,0.490579);
      rgb(598pt)=(0.474333,0.748829,0.48938);
      rgb(599pt)=(0.477167,0.74888,0.48818);
      rgb(600pt)=(0.479969,0.748931,0.486995);
      rgb(601pt)=(0.482752,0.748982,0.485821);
      rgb(602pt)=(0.485534,0.749033,0.484647);
      rgb(603pt)=(0.488316,0.749084,0.483473);
      rgb(604pt)=(0.491081,0.7491,0.482316);
      rgb(605pt)=(0.493838,0.7491,0.481168);
      rgb(606pt)=(0.496595,0.7491,0.480019);
      rgb(607pt)=(0.499351,0.7491,0.47887);
      rgb(608pt)=(0.502069,0.74912,0.477722);
      rgb(609pt)=(0.504775,0.749145,0.476573);
      rgb(610pt)=(0.50748,0.749171,0.475424);
      rgb(611pt)=(0.510186,0.749196,0.474276);
      rgb(612pt)=(0.512892,0.7492,0.47317);
      rgb(613pt)=(0.515598,0.7492,0.472073);
      rgb(614pt)=(0.518303,0.7492,0.470975);
      rgb(615pt)=(0.521009,0.7492,0.469877);
      rgb(616pt)=(0.523644,0.749176,0.46878);
      rgb(617pt)=(0.526273,0.749151,0.467682);
      rgb(618pt)=(0.528902,0.749125,0.466585);
      rgb(619pt)=(0.531531,0.7491,0.465487);
      rgb(620pt)=(0.53416,0.749074,0.464415);
      rgb(621pt)=(0.536789,0.749049,0.463343);
      rgb(622pt)=(0.539418,0.749023,0.462271);
      rgb(623pt)=(0.542043,0.748998,0.461199);
      rgb(624pt)=(0.544621,0.748972,0.460127);
      rgb(625pt)=(0.547199,0.748947,0.459055);
      rgb(626pt)=(0.549777,0.748921,0.457983);
      rgb(627pt)=(0.55235,0.748891,0.45692);
      rgb(628pt)=(0.554903,0.74884,0.455899);
      rgb(629pt)=(0.557456,0.748789,0.454878);
      rgb(630pt)=(0.560008,0.748738,0.453857);
      rgb(631pt)=(0.562554,0.748687,0.452829);
      rgb(632pt)=(0.565081,0.748636,0.451783);
      rgb(633pt)=(0.567608,0.748585,0.450736);
      rgb(634pt)=(0.570135,0.748534,0.449689);
      rgb(635pt)=(0.572653,0.748474,0.44866);
      rgb(636pt)=(0.575155,0.748397,0.447665);
      rgb(637pt)=(0.577656,0.748321,0.446669);
      rgb(638pt)=(0.580158,0.748244,0.445674);
      rgb(639pt)=(0.582649,0.748168,0.444678);
      rgb(640pt)=(0.585125,0.748091,0.443683);
      rgb(641pt)=(0.587601,0.748014,0.442687);
      rgb(642pt)=(0.590077,0.747938,0.441692);
      rgb(643pt)=(0.59254,0.747861,0.440709);
      rgb(644pt)=(0.59499,0.747785,0.439739);
      rgb(645pt)=(0.597441,0.747708,0.438769);
      rgb(646pt)=(0.599891,0.747632,0.437799);
      rgb(647pt)=(0.602311,0.747555,0.436814);
      rgb(648pt)=(0.604711,0.747478,0.435819);
      rgb(649pt)=(0.60711,0.747402,0.434823);
      rgb(650pt)=(0.60951,0.747325,0.433828);
      rgb(651pt)=(0.611909,0.747232,0.432867);
      rgb(652pt)=(0.614308,0.747129,0.431922);
      rgb(653pt)=(0.616708,0.747027,0.430978);
      rgb(654pt)=(0.619107,0.746925,0.430033);
      rgb(655pt)=(0.621487,0.746823,0.429089);
      rgb(656pt)=(0.623861,0.746721,0.428144);
      rgb(657pt)=(0.626235,0.746619,0.4272);
      rgb(658pt)=(0.628609,0.746517,0.426256);
      rgb(659pt)=(0.630962,0.746393,0.425311);
      rgb(660pt)=(0.63331,0.746266,0.424367);
      rgb(661pt)=(0.635658,0.746138,0.423422);
      rgb(662pt)=(0.638007,0.746011,0.422478);
      rgb(663pt)=(0.640332,0.745906,0.421557);
      rgb(664pt)=(0.642654,0.745804,0.420638);
      rgb(665pt)=(0.644977,0.745702,0.419719);
      rgb(666pt)=(0.6473,0.7456,0.4188);
      rgb(667pt)=(0.649623,0.745472,0.417881);
      rgb(668pt)=(0.651946,0.745345,0.416962);
      rgb(669pt)=(0.654268,0.745217,0.416043);
      rgb(670pt)=(0.656587,0.745089,0.415124);
      rgb(671pt)=(0.658859,0.744962,0.414205);
      rgb(672pt)=(0.661131,0.744834,0.413286);
      rgb(673pt)=(0.663402,0.744707,0.412368);
      rgb(674pt)=(0.665674,0.744579,0.411453);
      rgb(675pt)=(0.667946,0.744451,0.410559);
      rgb(676pt)=(0.670218,0.744324,0.409666);
      rgb(677pt)=(0.672489,0.744196,0.408773);
      rgb(678pt)=(0.674755,0.744062,0.407879);
      rgb(679pt)=(0.677001,0.743909,0.406986);
      rgb(680pt)=(0.679247,0.743756,0.406092);
      rgb(681pt)=(0.681494,0.743603,0.405199);
      rgb(682pt)=(0.68374,0.743458,0.404306);
      rgb(683pt)=(0.685986,0.74333,0.403412);
      rgb(684pt)=(0.688232,0.743203,0.402519);
      rgb(685pt)=(0.690479,0.743075,0.401626);
      rgb(686pt)=(0.692704,0.742937,0.400732);
      rgb(687pt)=(0.694899,0.742784,0.399839);
      rgb(688pt)=(0.697094,0.742631,0.398945);
      rgb(689pt)=(0.699289,0.742477,0.398052);
      rgb(690pt)=(0.701497,0.742324,0.397171);
      rgb(691pt)=(0.703718,0.742171,0.396303);
      rgb(692pt)=(0.705939,0.742018,0.395435);
      rgb(693pt)=(0.708159,0.741865,0.394568);
      rgb(694pt)=(0.710351,0.741712,0.3937);
      rgb(695pt)=(0.71252,0.741559,0.392832);
      rgb(696pt)=(0.71469,0.741405,0.391964);
      rgb(697pt)=(0.71686,0.741252,0.391096);
      rgb(698pt)=(0.719029,0.741082,0.390228);
      rgb(699pt)=(0.721199,0.740904,0.38936);
      rgb(700pt)=(0.723369,0.740725,0.388492);
      rgb(701pt)=(0.725538,0.740546,0.387625);
      rgb(702pt)=(0.727708,0.740386,0.386757);
      rgb(703pt)=(0.729878,0.740233,0.385889);
      rgb(704pt)=(0.732047,0.74008,0.385021);
      rgb(705pt)=(0.734217,0.739927,0.384153);
      rgb(706pt)=(0.736366,0.739753,0.383285);
      rgb(707pt)=(0.73851,0.739574,0.382417);
      rgb(708pt)=(0.740654,0.739395,0.38155);
      rgb(709pt)=(0.742798,0.739217,0.380682);
      rgb(710pt)=(0.744919,0.739038,0.379837);
      rgb(711pt)=(0.747038,0.738859,0.378995);
      rgb(712pt)=(0.749156,0.738681,0.378152);
      rgb(713pt)=(0.751275,0.738502,0.37731);
      rgb(714pt)=(0.753394,0.738323,0.376442);
      rgb(715pt)=(0.755512,0.738145,0.375574);
      rgb(716pt)=(0.757631,0.737966,0.374707);
      rgb(717pt)=(0.75975,0.737789,0.373841);
      rgb(718pt)=(0.761868,0.737636,0.372998);
      rgb(719pt)=(0.763987,0.737483,0.372156);
      rgb(720pt)=(0.766105,0.73733,0.371314);
      rgb(721pt)=(0.76822,0.737169,0.370471);
      rgb(722pt)=(0.770313,0.736965,0.369629);
      rgb(723pt)=(0.772406,0.73676,0.368786);
      rgb(724pt)=(0.774499,0.736556,0.367944);
      rgb(725pt)=(0.776592,0.736358,0.367096);
      rgb(726pt)=(0.778686,0.736179,0.366228);
      rgb(727pt)=(0.780779,0.736001,0.36536);
      rgb(728pt)=(0.782872,0.735822,0.364492);
      rgb(729pt)=(0.784957,0.735643,0.363632);
      rgb(730pt)=(0.787024,0.735465,0.36279);
      rgb(731pt)=(0.789092,0.735286,0.361948);
      rgb(732pt)=(0.791159,0.735107,0.361105);
      rgb(733pt)=(0.793227,0.734929,0.360263);
      rgb(734pt)=(0.795295,0.73475,0.359421);
      rgb(735pt)=(0.797362,0.734571,0.358578);
      rgb(736pt)=(0.79943,0.734392,0.357736);
      rgb(737pt)=(0.801485,0.734214,0.356881);
      rgb(738pt)=(0.803527,0.734035,0.356014);
      rgb(739pt)=(0.805569,0.733856,0.355146);
      rgb(740pt)=(0.807611,0.733678,0.354278);
      rgb(741pt)=(0.809668,0.733499,0.353424);
      rgb(742pt)=(0.811735,0.73332,0.352582);
      rgb(743pt)=(0.813803,0.733142,0.35174);
      rgb(744pt)=(0.81587,0.732963,0.350897);
      rgb(745pt)=(0.817921,0.732784,0.350038);
      rgb(746pt)=(0.819963,0.732606,0.349171);
      rgb(747pt)=(0.822005,0.732427,0.348303);
      rgb(748pt)=(0.824047,0.732248,0.347435);
      rgb(749pt)=(0.826071,0.73207,0.346567);
      rgb(750pt)=(0.828087,0.731891,0.345699);
      rgb(751pt)=(0.830104,0.731712,0.344831);
      rgb(752pt)=(0.83212,0.731534,0.343963);
      rgb(753pt)=(0.834158,0.731355,0.343095);
      rgb(754pt)=(0.8362,0.731176,0.342228);
      rgb(755pt)=(0.838242,0.730998,0.34136);
      rgb(756pt)=(0.840284,0.730819,0.340492);
      rgb(757pt)=(0.842303,0.73064,0.339624);
      rgb(758pt)=(0.84432,0.730462,0.338756);
      rgb(759pt)=(0.846336,0.730283,0.337888);
      rgb(760pt)=(0.848353,0.730104,0.33702);
      rgb(761pt)=(0.850369,0.729926,0.336153);
      rgb(762pt)=(0.852386,0.729747,0.335285);
      rgb(763pt)=(0.854402,0.729568,0.334417);
      rgb(764pt)=(0.856419,0.729391,0.333546);
      rgb(765pt)=(0.858435,0.729238,0.332627);
      rgb(766pt)=(0.860452,0.729085,0.331708);
      rgb(767pt)=(0.862468,0.728932,0.330789);
      rgb(768pt)=(0.864481,0.728778,0.329874);
      rgb(769pt)=(0.866472,0.728625,0.32898);
      rgb(770pt)=(0.868463,0.728472,0.328087);
      rgb(771pt)=(0.870454,0.728319,0.327194);
      rgb(772pt)=(0.872445,0.728166,0.326295);
      rgb(773pt)=(0.874436,0.728013,0.325376);
      rgb(774pt)=(0.876427,0.727859,0.324457);
      rgb(775pt)=(0.878418,0.727706,0.323538);
      rgb(776pt)=(0.880417,0.727561,0.322619);
      rgb(777pt)=(0.882433,0.727433,0.3217);
      rgb(778pt)=(0.88445,0.727306,0.320781);
      rgb(779pt)=(0.886466,0.727178,0.319862);
      rgb(780pt)=(0.888463,0.72705,0.318933);
      rgb(781pt)=(0.890429,0.726923,0.317989);
      rgb(782pt)=(0.892394,0.726795,0.317044);
      rgb(783pt)=(0.894359,0.726668,0.3161);
      rgb(784pt)=(0.896337,0.726552,0.315132);
      rgb(785pt)=(0.898328,0.72645,0.314136);
      rgb(786pt)=(0.900319,0.726348,0.313141);
      rgb(787pt)=(0.90231,0.726246,0.312145);
      rgb(788pt)=(0.904301,0.726158,0.31115);
      rgb(789pt)=(0.906292,0.726081,0.310154);
      rgb(790pt)=(0.908283,0.726005,0.309159);
      rgb(791pt)=(0.910274,0.725928,0.308163);
      rgb(792pt)=(0.912249,0.725851,0.307151);
      rgb(793pt)=(0.914214,0.725775,0.30613);
      rgb(794pt)=(0.91618,0.725698,0.305109);
      rgb(795pt)=(0.918145,0.725622,0.304088);
      rgb(796pt)=(0.920111,0.7256,0.303031);
      rgb(797pt)=(0.922076,0.7256,0.301959);
      rgb(798pt)=(0.924041,0.7256,0.300886);
      rgb(799pt)=(0.926007,0.7256,0.299814);
      rgb(800pt)=(0.927972,0.7256,0.298722);
      rgb(801pt)=(0.929938,0.7256,0.297624);
      rgb(802pt)=(0.931903,0.7256,0.296527);
      rgb(803pt)=(0.933869,0.7256,0.295429);
      rgb(804pt)=(0.935812,0.725668,0.294264);
      rgb(805pt)=(0.937752,0.725744,0.29309);
      rgb(806pt)=(0.939692,0.725821,0.291916);
      rgb(807pt)=(0.941632,0.725897,0.290741);
      rgb(808pt)=(0.943571,0.726023,0.289518);
      rgb(809pt)=(0.945511,0.726151,0.288293);
      rgb(810pt)=(0.947451,0.726278,0.287068);
      rgb(811pt)=(0.949389,0.726411,0.285839);
      rgb(812pt)=(0.951278,0.726641,0.284537);
      rgb(813pt)=(0.953167,0.72687,0.283235);
      rgb(814pt)=(0.955056,0.7271,0.281933);
      rgb(815pt)=(0.956938,0.72734,0.280622);
      rgb(816pt)=(0.958776,0.727646,0.279243);
      rgb(817pt)=(0.960614,0.727952,0.277865);
      rgb(818pt)=(0.962451,0.728259,0.276486);
      rgb(819pt)=(0.964273,0.728597,0.275086);
      rgb(820pt)=(0.966034,0.729057,0.273606);
      rgb(821pt)=(0.967795,0.729516,0.272126);
      rgb(822pt)=(0.969557,0.729976,0.270645);
      rgb(823pt)=(0.971288,0.730473,0.269135);
      rgb(824pt)=(0.972947,0.73106,0.267552);
      rgb(825pt)=(0.974606,0.731647,0.265969);
      rgb(826pt)=(0.976265,0.732234,0.264387);
      rgb(827pt)=(0.977857,0.732879,0.262785);
      rgb(828pt)=(0.979338,0.733619,0.261151);
      rgb(829pt)=(0.980818,0.734359,0.259518);
      rgb(830pt)=(0.982299,0.735099,0.257884);
      rgb(831pt)=(0.983697,0.73591,0.256227);
      rgb(832pt)=(0.984999,0.736803,0.254542);
      rgb(833pt)=(0.986301,0.737697,0.252858);
      rgb(834pt)=(0.987603,0.73859,0.251173);
      rgb(835pt)=(0.988753,0.739566,0.249474);
      rgb(836pt)=(0.989774,0.740613,0.247764);
      rgb(837pt)=(0.990795,0.741659,0.246054);
      rgb(838pt)=(0.991816,0.742706,0.244344);
      rgb(839pt)=(0.992677,0.743816,0.242681);
      rgb(840pt)=(0.993443,0.744965,0.241048);
      rgb(841pt)=(0.994209,0.746114,0.239414);
      rgb(842pt)=(0.994975,0.747262,0.23778);
      rgb(843pt)=(0.995578,0.748465,0.236165);
      rgb(844pt)=(0.996114,0.74969,0.234557);
      rgb(845pt)=(0.99665,0.750915,0.232949);
      rgb(846pt)=(0.997186,0.752141,0.231341);
      rgb(847pt)=(0.997562,0.753386,0.229813);
      rgb(848pt)=(0.997893,0.754637,0.228307);
      rgb(849pt)=(0.998225,0.755887,0.226801);
      rgb(850pt)=(0.998557,0.757138,0.225295);
      rgb(851pt)=(0.998711,0.758433,0.223856);
      rgb(852pt)=(0.998839,0.759735,0.222426);
      rgb(853pt)=(0.998966,0.761037,0.220997);
      rgb(854pt)=(0.999094,0.762339,0.219567);
      rgb(855pt)=(0.999076,0.763641,0.218186);
      rgb(856pt)=(0.99905,0.764942,0.216808);
      rgb(857pt)=(0.999025,0.766244,0.21543);
      rgb(858pt)=(0.998995,0.767546,0.214054);
      rgb(859pt)=(0.998868,0.768848,0.212752);
      rgb(860pt)=(0.99874,0.77015,0.21145);
      rgb(861pt)=(0.998613,0.771451,0.210149);
      rgb(862pt)=(0.998473,0.772756,0.208856);
      rgb(863pt)=(0.998243,0.774083,0.207631);
      rgb(864pt)=(0.998014,0.775411,0.206405);
      rgb(865pt)=(0.997784,0.776738,0.20518);
      rgb(866pt)=(0.997539,0.77806,0.20396);
      rgb(867pt)=(0.997232,0.779362,0.20276);
      rgb(868pt)=(0.996926,0.780664,0.201561);
      rgb(869pt)=(0.99662,0.781966,0.200361);
      rgb(870pt)=(0.996299,0.783268,0.199168);
      rgb(871pt)=(0.995942,0.784569,0.197994);
      rgb(872pt)=(0.995584,0.785871,0.19682);
      rgb(873pt)=(0.995227,0.787173,0.195646);
      rgb(874pt)=(0.994842,0.788475,0.19449);
      rgb(875pt)=(0.994408,0.789777,0.193367);
      rgb(876pt)=(0.993974,0.791078,0.192244);
      rgb(877pt)=(0.99354,0.79238,0.191121);
      rgb(878pt)=(0.993083,0.793671,0.190021);
      rgb(879pt)=(0.992598,0.794947,0.188949);
      rgb(880pt)=(0.992113,0.796223,0.187877);
      rgb(881pt)=(0.991628,0.797499,0.186805);
      rgb(882pt)=(0.99113,0.798789,0.185732);
      rgb(883pt)=(0.990619,0.800091,0.18466);
      rgb(884pt)=(0.990109,0.801393,0.183588);
      rgb(885pt)=(0.989598,0.802695,0.182516);
      rgb(886pt)=(0.989072,0.803996,0.18146);
      rgb(887pt)=(0.988536,0.805298,0.180413);
      rgb(888pt)=(0.988,0.8066,0.179367);
      rgb(889pt)=(0.987464,0.807902,0.17832);
      rgb(890pt)=(0.98691,0.809186,0.177291);
      rgb(891pt)=(0.986349,0.810462,0.17627);
      rgb(892pt)=(0.985787,0.811738,0.175249);
      rgb(893pt)=(0.985226,0.813015,0.174228);
      rgb(894pt)=(0.984644,0.814311,0.173207);
      rgb(895pt)=(0.984057,0.815613,0.172186);
      rgb(896pt)=(0.98347,0.816914,0.171165);
      rgb(897pt)=(0.982883,0.818216,0.170144);
      rgb(898pt)=(0.982296,0.819518,0.169145);
      rgb(899pt)=(0.981709,0.82082,0.16815);
      rgb(900pt)=(0.981122,0.822122,0.167154);
      rgb(901pt)=(0.980535,0.823423,0.166159);
      rgb(902pt)=(0.979947,0.824725,0.165163);
      rgb(903pt)=(0.97936,0.826027,0.164168);
      rgb(904pt)=(0.978773,0.827329,0.163172);
      rgb(905pt)=(0.978186,0.828631,0.162177);
      rgb(906pt)=(0.977599,0.829932,0.161181);
      rgb(907pt)=(0.977012,0.831234,0.160186);
      rgb(908pt)=(0.976425,0.832536,0.15919);
      rgb(909pt)=(0.975838,0.833841,0.158195);
      rgb(910pt)=(0.975251,0.835168,0.157199);
      rgb(911pt)=(0.974664,0.836495,0.156204);
      rgb(912pt)=(0.974077,0.837823,0.155208);
      rgb(913pt)=(0.973489,0.83915,0.154213);
      rgb(914pt)=(0.972902,0.840477,0.153217);
      rgb(915pt)=(0.972315,0.841805,0.152222);
      rgb(916pt)=(0.971728,0.843132,0.151226);
      rgb(917pt)=(0.971155,0.844466,0.150224);
      rgb(918pt)=(0.970619,0.845819,0.149203);
      rgb(919pt)=(0.970083,0.847172,0.148182);
      rgb(920pt)=(0.969547,0.848525,0.147161);
      rgb(921pt)=(0.96902,0.849886,0.14614);
      rgb(922pt)=(0.968509,0.851265,0.145119);
      rgb(923pt)=(0.967999,0.852643,0.144098);
      rgb(924pt)=(0.967488,0.854022,0.143077);
      rgb(925pt)=(0.967,0.855411,0.142056);
      rgb(926pt)=(0.966541,0.856815,0.141035);
      rgb(927pt)=(0.966081,0.858219,0.140014);
      rgb(928pt)=(0.965622,0.859623,0.138992);
      rgb(929pt)=(0.965189,0.86104,0.137945);
      rgb(930pt)=(0.96478,0.862469,0.136873);
      rgb(931pt)=(0.964372,0.863899,0.135801);
      rgb(932pt)=(0.963963,0.865328,0.134729);
      rgb(933pt)=(0.96357,0.866773,0.133657);
      rgb(934pt)=(0.963187,0.868228,0.132585);
      rgb(935pt)=(0.962805,0.869683,0.131513);
      rgb(936pt)=(0.962422,0.871138,0.130441);
      rgb(937pt)=(0.962091,0.87261,0.129351);
      rgb(938pt)=(0.961785,0.874091,0.128253);
      rgb(939pt)=(0.961478,0.875571,0.127156);
      rgb(940pt)=(0.961172,0.877052,0.126058);
      rgb(941pt)=(0.960885,0.878571,0.124961);
      rgb(942pt)=(0.960605,0.880103,0.123863);
      rgb(943pt)=(0.960324,0.881634,0.122765);
      rgb(944pt)=(0.960043,0.883166,0.121668);
      rgb(945pt)=(0.959849,0.884719,0.120549);
      rgb(946pt)=(0.95967,0.886276,0.119426);
      rgb(947pt)=(0.959491,0.887833,0.118302);
      rgb(948pt)=(0.959313,0.88939,0.117179);
      rgb(949pt)=(0.959181,0.890995,0.116032);
      rgb(950pt)=(0.959054,0.892603,0.114884);
      rgb(951pt)=(0.958926,0.894211,0.113735);
      rgb(952pt)=(0.958799,0.895819,0.112587);
      rgb(953pt)=(0.958748,0.897453,0.111464);
      rgb(954pt)=(0.958697,0.899086,0.110341);
      rgb(955pt)=(0.958646,0.90072,0.109217);
      rgb(956pt)=(0.958602,0.902359,0.108089);
      rgb(957pt)=(0.958628,0.904043,0.106915);
      rgb(958pt)=(0.958653,0.905728,0.105741);
      rgb(959pt)=(0.958679,0.907413,0.104567);
      rgb(960pt)=(0.958718,0.909102,0.103393);
      rgb(961pt)=(0.95882,0.910812,0.102219);
      rgb(962pt)=(0.958922,0.912522,0.101044);
      rgb(963pt)=(0.959024,0.914232,0.0998703);
      rgb(964pt)=(0.959153,0.915962,0.0986961);
      rgb(965pt)=(0.959357,0.917749,0.0975219);
      rgb(966pt)=(0.959561,0.919536,0.0963477);
      rgb(967pt)=(0.959765,0.921323,0.0951736);
      rgb(968pt)=(0.959996,0.923118,0.0939907);
      rgb(969pt)=(0.960277,0.924931,0.092791);
      rgb(970pt)=(0.960557,0.926743,0.0915913);
      rgb(971pt)=(0.960838,0.928555,0.0903916);
      rgb(972pt)=(0.961151,0.930378,0.0891919);
      rgb(973pt)=(0.961509,0.932216,0.0879922);
      rgb(974pt)=(0.961866,0.934054,0.0867925);
      rgb(975pt)=(0.962223,0.935892,0.0855928);
      rgb(976pt)=(0.96262,0.937768,0.0843802);
      rgb(977pt)=(0.963053,0.939683,0.083155);
      rgb(978pt)=(0.963487,0.941597,0.0819297);
      rgb(979pt)=(0.963921,0.943512,0.0807045);
      rgb(980pt)=(0.964415,0.945426,0.0794643);
      rgb(981pt)=(0.964951,0.947341,0.0782135);
      rgb(982pt)=(0.965487,0.949255,0.0769628);
      rgb(983pt)=(0.966023,0.951169,0.075712);
      rgb(984pt)=(0.966594,0.953118,0.0744441);
      rgb(985pt)=(0.967181,0.955083,0.0731679);
      rgb(986pt)=(0.967768,0.957049,0.0718916);
      rgb(987pt)=(0.968355,0.959014,0.0706153);
      rgb(988pt)=(0.96898,0.960999,0.0693006);
      rgb(989pt)=(0.969619,0.96299,0.0679733);
      rgb(990pt)=(0.970257,0.964981,0.0666459);
      rgb(991pt)=(0.970895,0.966972,0.0653186);
      rgb(992pt)=(0.971554,0.968984,0.0639486);
      rgb(993pt)=(0.972218,0.971001,0.0625703);
      rgb(994pt)=(0.972882,0.973017,0.0611919);
      rgb(995pt)=(0.973545,0.975034,0.0598135);
      rgb(996pt)=(0.974232,0.97705,0.058318);
      rgb(997pt)=(0.974922,0.979067,0.056812);
      rgb(998pt)=(0.975611,0.981083,0.055306);
      rgb(999pt)=(0.9763,0.9831,0.0538)
    }
}
\tikzstyle arrowstyle=[scale=1]
\tikzstyle directed=[postaction={decorate,decoration={markings,
        mark=at position .65 with {\arrow[arrowstyle]{stealth}}}}]
\tikzstyle reverse directed=[postaction={decorate,decoration={markings,
        mark=at position .65 with {\arrowreversed[arrowstyle]{stealth};}}}]
\newcounter{tikzsubfigcounter}[figure]
\renewcommand{\thetikzsubfigcounter}{\thesection.\the\numexpr\value{figure}+1\relax\alph{tikzsubfigcounter}}
\newcounter{tikzsubfigcounterinvisible}[figure]
\renewcommand{\thetikzsubfigcounterinvisible}{\the\numexpr\value{figure}+1\relax\alph{tikzsubfigcounterinvisible}}
\begin{document}

\begin{abstract}
  We derive a second-order realizability-preserving scheme for moment models for linear kinetic
  equations. We apply this scheme to the first-order continuous ($\HFMN$) and discontinuous ($\PMMN$)
  models in slab and three-dimensional geometry derived in \cite{SchneiderLeibner2020} as well as
  the classical full-moment $\Mn$ models. We provide extensive numerical analysis as well as our code
  to show that the new class of models can compete or even outperform the full-moment models in
  reasonable test cases.
\end{abstract}
\begin{keyword}
  moment models \sep minimum entropy \sep kinetic transport equation \sep continuous Galerkin \sep discontinuous Galerkin \sep realizability
\end{keyword}
\maketitle

\noindent


\def\tikzpath{Images/}

\section{Introduction}
We consider moment closures, which are a type of (non-linear) Galerkin projection, in the context
of kinetic transport equations. Here, moments are defined by taking velocity- or phase-space
averages with respect to some (truncated) basis of the velocity space. Unfortunately, the
truncation inevitably comes at the cost that information is required from the basis elements
which were removed.

The specification of this information, the so-called moment closure problem, distinguishes
different moment
methods.
In the context of linear radiative transport, the standard spectral method
is commonly referred to as the $\PN$ closure \cite{Lewis1984},
where $\momentorder$ is the degree of the highest-order moments in the model.
The $\PN$ method is powerful and simple to implement, but does not
take into account the fact that the original function to be
approximated, the kinetic density, must be non-negative.
Thus, $\PN$ solutions can contain negative values for the local
densities of particles, rendering the solution physically meaningless.
Entropy-based moment closures, typically denoted by $\Mn$ models in the
context of radiative transport \cite{Min78,DubFeu99},
have (for physically relevant entropies) all the properties one would desire in a moment method,
namely
positivity of the underlying kinetic density, hyperbolicity of the closed system of equations,
and entropy dissipation \cite{Levermore1996}.
These models are usually comparatively expensive as they require the numerical solution of an
optimization problem at every point on the space-time grid. Practical interest in such models
increased recently due to their inherent parallelizability \cite{Hauck2010}.
While the cost of solving the local nonlinear problems in the $\Mn$ model scales
strongly with the
number of moments $\momentnumber$ (since one has to solve square problems of size
$\momentnumber$),
the desired spectral convergence with respect to the moment order $\momentorder$ is only
achieved
for smooth test cases, which rarely occur in reality. This means that the gain in efficiency by
increasing the order of approximation will become rather insignificant.

To increase the accuracy of the $\Mn$ models while maintaining the lower cost for
small moment
order $\momentorder$, a partition of the velocity space while keeping the moment order
fixed is
useful, similar to some h-refinement for, e.g., finite element approximations
\cite{Babuska1992}. We
focus on the continuous and discontinuous piece-wise linear bases derived in
\cite{SchneiderLeibner2020}, which aim to be a generalization of the special cases provided in
\cite{Frank2006,DubKla02,Schneider2014,Ritter2016,Schneider2017} in slab geometry and
the fully three-dimensional case.

Besides their inherent parallelizability, in order to make these methods truly competitive with
more
basic discretizations, the gains in efficiency that come from higher-order methods (in space and
time) are necessary. Here the issue of realizability becomes a stumbling block.
The property of positivity implies that the system of moment
equations only evolves on the set of so-called realizable moments.
Realizable moments are simply those moments associated with positive
densities, and the set of these moments forms a convex cone which is a
strict subset of all moment vectors.
This property, even though desirable due to its consistency with the
original kinetic distribution, can cause problems in numerical simulations.
Standard high-order numerical solutions (in space and time) to the Euler equations, which indeed
are an entropy-based moment closure, have been observed to have negative
local densities and pressures \cite{Zhang2010}. Similar effects have been reported in the
context
of elastic flow \cite{Schar1996}. This is exactly loss of realizability.

We propose a second-order realizability-preserving
scheme, that is based on a splitting technique and analytic solutions of the stiff part, combined
with a realizability-preserving reconstruction scheme. It turns out that this scheme is very
effective for (medium) smooth and non-smooth test cases, which can also occur in practice. The
realizability-preserving property is achieved using the realizability limiter proposed in
\cite{Alldredge2015,Schneider2015b,Schneider2016a,Chidyagwai2017}. This limiter requires
information about the set of realizable moments, which turns out to be very simple in the context
of
our first-order models \cite{SchneiderLeibner2020}. Again, this additionally makes the
implementation of such models faster (and easier) compared to standard $\Mn$
models.

This paper is organized as follows. First, we shortly recall the transport equation, its moment
approximations and the relevant results from \cite{SchneiderLeibner2020} (\cref{sec:modelling,sec:angularbases}).
Then, we propose our
second-order realizability-preserving scheme and investigate all the required properties that it
should fulfill (\cref{sec:scheme}). In \cref{section:ImplementationDetails}, we discuss some
implementation details of our scheme.
Finally, in \cref{sec:results}, we give a comprehensive numerical investigation of our models
and the
$\Mn$
models in slab geometry and three dimension, to show that our models can indeed compete with or
even
outperform the full-moment models.

\section{Modeling}\label{sec:modelling}
This section closely follows the corresponding part in \cite{SchneiderLeibner2020}. We consider
the linear transport equation
\begin{subequations}\label{eq:FokkerPlanckEasy}
  \begin{equation}\label{eq:TransportEquation}
    \dt\distribution+\SC\cdot\spatialGradient\distribution + \absorption\distribution =
    \scattering\collision{\distribution}+\source,
  \end{equation}
  which describes the density of particles with speed $\SC\in\sphere$ at position
  $\spatialvar = (\x,\y,\z)^T\in\domain\subseteq\R^3$ and time $\timevar$ under the events of
  scattering (proportional to $\scattering\left(\timevar,\spatialvar\right)$), absorption
  (proportional to $\absorption\left(\spatialvar\right)$) and emission (proportional to
  $\source\left(\spatialvar,\SC\right)$). Collisions are modeled using the BGK-type collision
  operator
  \begin{equation}\label{eq:collisionOperatorR}
    \collision{\distribution} =  \int\limits_{\sphere} \collisionkernel(\SC, \SC^\prime)
    \distribution(\timevar, \spatialvar, \SC^\prime)~d\SC^\prime
    - \int\limits_{\sphere} \collisionkernel(\SC^\prime, \SC) \distribution(\timevar, \spatialvar, \SC)~d\SC^\prime.
  \end{equation}
  The collision kernel $\collisionkernel$ is assumed to be strictly positive, symmetric (i.e.
  $\collisionkernel(\SC,\SC')=\collisionkernel(\SC',\SC)$) and normalized to
  $\int\limits_{\sphere} \collisionkernel(\SC^\prime, \SC) d\SC^\prime~\equiv~1$.	In this paper, we restrict ourselves to
  \emph{isotropic scattering}, where $\collisionkernel(\SC, \SC^\prime) \equiv \frac{1}{\abs{\sphere}} = \frac{1}{4\pi}$.

  The equation is supplemented with initial condition and Dirichlet boundary conditions:
  \begin{align}
    \distribution(0,\spatialvar,\SC)
     & = \distributiontzero(\spatialvar,\SC)
     & \text{for } \spatialvar\in\domain, \SC\in\sphere     \label{eq:kineticequationinitial}                                    \\
    \distribution(\timevar,\spatialvar,\SC)
     & = \distributionboundary(\timevar,\spatialvar,\SC)
     & \text{for } \timevar\in\timeint, \spatialvar\in\partial\domain, \outernormal\cdot\SC<0 \label{eq:kineticequationboundary}
  \end{align}
  where $\outernormal$ is the outward unit normal vector in $\spatialvar\in\partial\domain$.
\end{subequations}
Parameterizing $\SC$ in spherical coordinates we obtain
\begin{equation}\label{eq:SphericalCoordinates}
  \SC = \left(\sqrt{1-\SCheight^2}\cos(\SCangle),\sqrt{1-\SCheight^2}\sin(\SCangle),
  \SCheight\right)^T \eqqcolon \left(\SCx,\SCy,\SCz\right)^T
\end{equation}
where $\SCangle\in[0,2\pi]$ is the azimuthal and $\SCheight\in[-1,1]$ the cosine of the polar
angle.
\begin{definition}
  The vector of functions $\basis:\sphere\to\R^{\momentnumber}$ consisting of $\momentnumber$ basis functions
  $\basiscomp[i]$, $\basisindex=0,\ldots\momentnumber-1$ of maximal \emph{order}
  $\momentorder$ (in $\SC$) is called an \emph{angular basis}.

  The so-called \emph{moments} $\moments=\left(\momentcomp{0},\ldots,\momentcomp{\momentnumber-1}\right)^T$ of a given distribution function
  $\distribution$ are then defined by
  \begin{equation}\label{eq:moments}
    \moments = \int\limits_{\sphere} {\basis}\distribution~d\SC \eqqcolon \ints{\basis\distribution}
  \end{equation}
  where the integration is performed component-wise.

  Furthermore, the quantity $\density = \density(\moments) \coloneqq \ints{\distribution}$ is called the
  \emph{local particle density}. Additionally, $\isotropicmoment = \ints{\basis}$ is called the
  \emph{isotropic moment}.
\end{definition}
Equations for $\moments$ can then be obtained by multiplying \eqref{eq:FokkerPlanckEasy}
with $\basis$ and integration over $\sphere$, resulting in
\begin{equation}\label{eq:MomentSystemUnclosed}
  \dt\moments+\spatialGradient\cdot\ints{\SC \basis\distribution} + \absorption\moments =
  \scattering\ints{\basis\collision{\distribution}}+\ints{\basis\source}.
\end{equation}
Depending on the choice of $\basis$ the terms $\ints{\SCx \basis\distribution}$,
$\ints{\SCy \basis\distribution}$, $\ints{\SCz \basis\distribution}$, and in some cases even $\ints{\basis\collision{\distribution}}$,
cannot be given explicitly in terms of $\moments$. Therefore an ansatz
$\ansatz$ has to be made for $\distribution$ closing the unknown terms. This is
called the \emph{moment-closure problem}.

In this paper the ansatz density $\ansatz$ is reconstructed from the moments
$\moments$ by minimizing the entropy-functional
\begin{equation}\label{eq:OptProblem}
  \entropyFunctional(\distribution) = \ints{\entropy(\distribution)}\text{ under the moment constraints }  \ints{\basis\distribution} =
  \moments.
\end{equation}
The kinetic entropy density $\entropy\colon\R\to\R$ is strictly convex and twice continuously
differentiable and the
minimum is simply taken over all functions $\distribution = \distribution(\SC)$ such that
$\entropyFunctional(\distribution)$ is well defined.
This problem, which must be solved over the space-time mesh, is typically solved through its
strictly convex finite-dimensional dual,
\begin{equation} \label{eq:dual}
  \multipliers(\moments) \coloneqq \underset{\tilde{\multipliers} \in \R^{\momentnumber}}{\argmin}
  \ints{\ld{\entropy}(\basis \cdot \tilde{\multipliers})} - \moments \cdot \tilde{\multipliers},
\end{equation}
where \(\ld{\entropy}\) is the Legendre dual of \(\entropy\). The first-order necessary
conditions for the
multipliers \(\multipliers(\moments)\) show that
the solution to~\eqref{eq:OptProblem}, if it exists, has the form
\begin{equation}\label{eq:psiME}
  \ansatz[\moments] = \ld{\entropy}' \left(\basis \cdot \multipliers(\moments) \right)
\end{equation}
This approach is called the \emph{minimum-entropy closure} \cite{Levermore1996}. The resulting model
has many desirable properties:
symmetric hyperbolicity, bounded eigenvalues of the directional flux Jacobian and the direct
existence of an entropy-entropy flux pair (compare \cite{Levermore1996,Schneider2016}).

The kinetic entropy density $\entropy$ can be chosen according to the
physics being modelled.
As in \cite{Levermore1996,Hauck2010}, Maxwell-Boltzmann entropy%
\begin{equation}\label{eq:EntropyM}
  \entropy(\distribution) = \distribution \log(\distribution) - \distribution
\end{equation}
is used, thus $\ld{\entropy}(p) = \ld{\entropy}'(p) = \exp(p)$. This entropy is used for non-interacting particles as in an
ideal gas.

Substituting $\distribution$ in \eqref{eq:MomentSystemUnclosed} with $\ansatz[\moments]$ yields a
closed system of equations for $\moments$:
\begin{equation}\label{eq:MomentSystemClosed}
  \dt\moments+\dx\ints{\SCx \basis\ansatz[\moments]}+\dy\ints{\SCy \basis\ansatz[\moments]}+\dz\ints{\SCz \basis\ansatz[\moments]} +
  \absorption\moments = \scattering\ints{\basis\collision{\ansatz[\moments]}}+\ints{\basis\source}.
\end{equation}
\begin{remark}
  Note that using the entropy $\entropy(\distribution) = \frac12\distribution^2$ the linear ansatz
  \begin{equation}\label{eq:PnAnsatz}
    \ansatz[\moments] = \basis \cdot \multipliers(\moments)
  \end{equation}
  is obtained, leading to standard continuous/discontinuous-Galerkin approaches.
  If the angular basis is chosen as spherical harmonics of order $\momentorder$,
  \eqref{eq:MomentSystemClosed} turns into the classical $\PN$
  model \cite{Blanco1997,Brunner2005b,Seibold2014}.
\end{remark}
For convenience, we write \eqref{eq:MomentSystemClosed} in the standard form of a non-linear hyperbolic
system of partial differential equations:
\begin{equation}\label{eq:GeneralHyperbolicSystem2D}
  \dt\moments+\dx\Flux_1\left(\moments\right)+\dy\Flux_2\left(\moments\right)+\dz\Flux_3\left(\moments\right) = \Source\left(\moments\right),
\end{equation}
where
\begin{subequations}\label{eq:FluxDefinitions}
  \begin{align}
    \Flux_1\left(\moments\right)             & =  \ints{\SC_x\basis\ansatz[\moments]},\quad \Flux_2\left(\moments\right) =  \ints{\SC_y\basis\ansatz[\moments]},\quad \Flux_3\left(\moments\right) =  \ints{\SC_z\basis\ansatz[\moments]}\in\R^{\momentnumber}, \\
    \Source\left(\spatialvar,\moments\right) & = {\scattering}(\spatialvar)\ints{\basis \collision{\ansatz[\moments]}}+\ints{\basis\source(\spatialvar,\cdot)}-\absorption(\spatialvar)\moments.
  \end{align}
\end{subequations}
For ease of visibility, we also consider our models in slab geometry, which is a projection of the
sphere onto the $\z$-axis \cite{Seibold2014}. The transport equation under
consideration then has the form
\begin{equation}\label{eq:TransportEquation1D}
  \dt\distribution+\SCheight\dz\distribution + \absorption\distribution =
  \scattering\collision{\distribution}+\source, \qquad
  \timevar\in\timeint,\z\in\domain,\SCheight\in[-1,1].
\end{equation}
The shorthand notation $\ints{\cdot} = \int\limits_{-1}^1\cdot~d\SCheight$ then denotes integration over $[-1,1]$
instead of $\sphere{}$. Finally, the moment system is given by
\begin{equation}\label{eq:MomentSystemUnclosed1D}
  \dt\moments+\dz\ints{\SCheight \basis\ansatz[\moments]} + \absorption\moments =
  \scattering\ints{\basis\collision{\ansatz[\moments]}}+\ints{\basis\source}.
\end{equation}

\section{Angular bases}\label{sec:angularbases}
We shortly recall the angular bases under consideration. For a detailed derivation and further
information, we refer the reader to \cite{SchneiderLeibner2020}\,.
\subsection{Slab geometry}
\begin{itemize}
  \item Full-moment basis
        \begin{subequations}
          \begin{align}
            \label{eq:monomialbasis}
            \fmbasis[\momentorder] & = \left(1,\SCheight,\ldots,\SCheight^\momentorder\right)^T \qquad\qquad
            \text{ or }                                                                                      \\
            \label{eq:legendrebasis}
            \fmbasis[\momentorder] & = \left(P_0^0,P_1^0,P_2^0\ldots,P_\momentorder^0\right)^T
          \end{align}
        \end{subequations}
        with the monomials or the \emph{Legendre polynomials} $P_{\momentindex}^0$, $\momentindex=0,\ldots,\momentorder$.
  \item Piecewise-linear angular basis (hat functions, continuous-Galerkin ansatz) $\hfbasis =
          \left(\hfbasiscomp[0],\ldots,\hfbasiscomp[\momentnumber-1]\right)^T$
        \begin{equation}\label{eq:hfbasis}
          \hfbasiscomp[\basisindex](\SCheight) =
          \indicator[\ccell{\basisindex-1}]\cfrac{\SCheight-\SCheight_{\basisindex-1}}{\SCheight_\basisindex-\SCheight_{\basisindex-1}}
          +\indicator[\ccell{\basisindex}]\cfrac{\SCheight-\SCheight_{\basisindex+1}}{\SCheight_\basisindex-\SCheight_{\basisindex+1}},
        \end{equation}
        where $-1 = \SCheight_0 < \SCheight_1 < \ldots < \SCheight_{\momentnumber-2} < \SCheight_{\momentnumber-1}=1$ are some angular ``grid'' points
        and
        \(\indicator[\ccell{\cellindex}](\SCheight)\) is the indicator function on the interval
        \(\ccell{\cellindex} = [\SCheight_{\cellindex}, \SCheight_{\cellindex+1}]\)
        (with \(\indicator[\ccell{-1}] \equiv \indicator[\ccell{\momentnumber-1}] \equiv 0\)).
  \item Partial moments (discontinuous-Galerkin ansatz) $\pmbasis[] = \left(\pmbasis[\cell{0}],\ldots\pmbasis[\cell{\hankelhalfind-1}]\right)$
        \begin{equation*}
          \pmbasis[\cell{\cellindex}] = \indicator[\cell{\cellindex}]\left(1,\SCheight\right)^T,
        \end{equation*}
        where \(\hankelhalfind\) is the number of intervals.
\end{itemize}

\begin{definition}
  \label{def:SlabGeometryNames}
  The resulting linear (compare \eqref{eq:PnAnsatz}) and nonlinear models (compare
  \eqref{eq:EntropyM}) will be called $\PN$/$\Mn$ (full moment
  basis), $\HFPN$/$\HFMN$ (hat functions basis) and
  $\PMPN$/$\PMMN$ (partial moment basis), respectively.
\end{definition}

\subsection{Angular bases in three dimensions}
Albeit both approaches are not limited to this, we consider moments on spherical triangles.
To that end, let $\TriangulationSphere$ be a spherical triangulation of $\sphere$ and
$\sphericaltriangle\in\TriangulationSphere$ be a spherical triangle.
In this paper, the triangulation \(\generalpartition\) will be obtained by dyadic refinement
of the octants of the sphere \(\angularDomain = \sphere\),
i.e.\ the coarsest triangulation contains the eight spherical triangles obtained by projecting the
octahedron with vertices
\(\{{(\pm1, 0, 0)}^T\), \({(0, \pm1, 0)}^T\),
\({(0, 0, \pm1)}^T\}\) to the sphere
and finer partitions are obtained by iteratively subdividing each spherical triangle into four new
ones,
adding vertices at the midpoints of the triangle edges. After
\(\refinementnumber\) refinements, we thus obtain \(\nvertex(\refinementnumber) =
4^{\refinementnumber+1}+2\) vertices and \(\nentity(\refinementnumber) = 2 \cdot
4^{\refinementnumber+1}\) spherical triangles.

The bases that we use are the following.
\begin{itemize}
  \item Full-moment basis
        \begin{equation*}
          \fmbasis[\momentorder] = \left(\Slm(\SCheight,\SCangle); \SHl = 0,\ldots,\momentorder,~ \SHm = -\SHl,\ldots,\SHl \right)^T,
        \end{equation*}
        where $\Slm$ are the real-valued spherical harmonics on the unit sphere
        \cite{Blanco1997,Seibold2014}.
  \item Barycentric-coordinate basis functions
        \begin{equation*}
          \hfbasis[\nvertex] = \left(\hfbasiscomp[0],\ldots,\hfbasiscomp[\nvertex-1]\right),
        \end{equation*}
        where \(\nvertex\) is the number of vertices of the triangulation and $\hfbasiscomp[\basisindex]$ is the
        basis function
        defined using spherical barycentric coordinates
        on the $\basisindex$-th vertex as
        in \cite{Buss2001,Langer2006,Rustamov2010}.
  \item Partial moments on the unit sphere
        \begin{equation*}
          \pmbasis[\momentnumber] = \left(\pmbasis[\sphericaltriangle]; \sphericaltriangle\in\TriangulationSphere\right) = \left(\left(\indicator[\sphericaltriangle],\indicator[\sphericaltriangle]\SC\right); \sphericaltriangle\in\TriangulationSphere\right),
        \end{equation*}
        where $\momentnumber = 4\cdot\abs{\TriangulationSphere}$ is the number of moments.
\end{itemize}
Naming of the models will be analogous to the slab-geometry case, compare \defnref{def:SlabGeometryNames}.

\subsection{Realizability}\label{sec:realizability}
The minimum-entropy moment problem \eqref{eq:OptProblem} has a
solution if and only if the moment vector is realizable.
\begin{definition}
  \label{def:RealizableSet}
  The \emph{realizable set} $\RD{\basis}{}$ is
  \begin{equation*}
    \RD{\basis}{} = \left\{\moments \in \R^{\momentnumber}~:~\exists
    \distribution(\SC)\ge 0,\, \density =
    \ints{\distribution} > 0,
    \text{ such that } \moments =\ints{\basis\distribution} \right\}.
  \end{equation*}
  If $\moments\in\RD{\basis}{}$, then $\moments$ is called \emph{realizable}.
  Any $\distribution$ such that $\moments =\ints{\basis \distribution}$ is called a \emph{representing
    density}.
\end{definition}
Unfortunately, checking whether a moment vector is realizable is not trivial for general bases.
However, for the piecewise linear moment models, the realizability conditions are particularly
simple (see~\cite{SchneiderLeibner2020}).
\begin{lemma}\label{cor:hf1drealizabilitypos}
  For the hat function basis in one or three dimensions, \(\moments \in \closure{\RD{\hfbasis}{}}\)
  if and only if $\momentcomp{\basisindex} \geq 0$ for all $\basisindex=0,\ldots,\momentnumber-1$.
\end{lemma}
\begin{lemma}\label{cor:pm1drealizabilitypos}
  For the partial moment basis in one dimension (slab geometry), \(\moments \in
  \closure{\RD{\pmbasis}{}}\)
  if and only if
  \begin{equation}\label{eq:pmrealizabilityconds}
    \momentcomp{2\cellindex} \geq 0 \quad \text{ and, for } \momentcomp{2\cellindex} > 0, \quad
    \frac{\momentcomp{2\cellindex+1}}{\momentcomp{2\cellindex}} \in \ccell{\cellindex} = [\SCheight_{\cellindex},
    \SCheight_{\cellindex+1}]
  \end{equation}
  for all $\cellindex=0,\ldots,\frac{\momentnumber}{2}-1$.
\end{lemma}
For more details on the realizability of the regarded models, see \cite{SchneiderLeibner2020}.

\section{Second-order realizability-preserving splitting scheme}\label{sec:scheme}

As already mentioned before, the minimum-entropy moment problem \eqref{eq:OptProblem} has a
solution if and only if the moment vector is realizable. This implies that it is mandatory to
maintain realizability during the numerical simulation (since otherwise the flux function cannot be
evaluated).
Explicit high-order schemes have been developed in \cite{Alldredge2015,Schneider2015b}.
Unfortunately, the physical parameters $\scattering$ and $\absorption$ directly
influence the CFL
condition, resulting in very small time steps for large scattering/absorption.

This can be overcome by using a first-order implicit-explicit time stepping scheme
\cite{Schneider2016a,Schneider2016aCodeIMEX,schneider2016implicit}, treating the transport part
explicit while implicitly solving the (time-)critical source term. Unfortunately, using
higher-order
IMEX schemes again results in a CFL condition of the same magnitude as for the fully explicit
schemes.

We are interested in a second-order scheme for \eqref{eq:GeneralHyperbolicSystem2D}. This can be
achieved by doing a Strang splitting for
\begin{subequations}
  \begin{align}
    \label{eq:SplittedSystema}
    \dt\moments & +\dx\Flux_1\left(\moments\right)+\dy\Flux_2\left(\moments\right)+\dz\Flux_3\left(\moments\right) = 0,                                                    \\
    \label{eq:SplittedSystemb}
    \dt\moments & \phantom{+\dx\Flux_1\left(\moments\right)+\dy\Flux_2\left(\moments\right)+\dz\Flux_3\left(\moments\right)}\,~= \Source\left(\spatialvar,\moments\right). 
  \end{align}
\end{subequations}
A \emph{second-order realizability preserving scheme} will be obtained if both subsystems are
solved with a (at least) second-order accurate and realizability-preserving scheme. For notational
simplicity, we show the full scheme for \emph{one spatial dimension} only. A generalization to
structured meshes in higher dimensions is straightforward.

\subsection{Source system}\label{sec:sourcesystem}
Let us start with the stiff part \eqref{eq:SplittedSystemb} whose finite-volume form is given by
\begin{equation}
  \label{eq:ODEexact}
  \dt \momentscellmean{\cellindex} =
  \frac{1}{\gridwidthz}\int\limits_{\z_{\cellindex-\frac12}}^{\z_{\cellindex+\frac12}}
  \Source\left(\z,\moments\right)~d\z.
\end{equation}
Fortunately, using the midpoint rule, it holds that
\begin{equation}
  \cellmean{\Source\left(\z,\moments\right)} =
  \frac{1}{\gridwidthz}\int\limits_{\z_{\cellindex-\frac12}}^{\z_{\cellindex+\frac12}}
  \Source\left(z, \moments\right)~d\z =
  \Source\left(\z_\cellindex,\momentscellmean{\cellindex}\right)+\mathcal{O}(\gridwidthz^2).
\end{equation}
To obtain a second-order accurate solution of \eqref{eq:ODEexact}, it is thus sufficient to
solve the system
\begin{equation}
  \label{eq:ODE}
  \dt \momentscellmean{\cellindex} = \Source\left(\z_\cellindex,\momentscellmean{\cellindex}\right),
\end{equation}
which is purely an ODE (in every cell).  As mentioned above, we restrict ourselves to isotropic
scattering,
where we have
$\collisionkernel(\SCheight,\SCheight') = \frac{1}{\ints{1}} = \frac12$, i.e.
\begin{equation}
  \collision{\distribution} = \frac{\ints{\distribution}}{\ints{1}} - \distribution.
\end{equation}
The source term now becomes
\begin{align}
  \begin{split}
    \Source\left(\z,\moments\right)
    & = {\scattering}\ints{\basis \collision{\ansatz[\moments]}}+\ints{\basis\source}
    -\absorption\moments
    = {\scattering}\frac{\density(\moments)}{\ints{1}}\ints{\basis}
    +\ints{\basis\source} -\crosssection\moments					     \\
    & = {\scattering} \isotropicmoment(\moments) + \ints{\basis\source}
    -\crosssection\moments
    = \left({\scattering} \isomatrix - \crosssection \eyematrix\right) \moments +
    \ints{\basis\source},
  \end{split}
\end{align}
where $\isomatrix = \frac{\isotropicmoment{(\multipliersone)}^T}{\ints{1}}$ is the matrix mapping the
moment
vector $\moments$
to the isotropic moment vector with the same density
$\isotropicmoment(\moments) = \isotropicmoment \cdot \frac{\density(\moments)}{\ints{1}}$. Here we
assumed that there exists a vector $\multipliersone$ such that $\multipliersone \cdot \basis \equiv 1$
(true for all regarded bases: $\multipliersone = (1, 0, \ldots, 0)^T$ for Legendre Polynomials,
$\multipliersone =
  (\sqrt{4 \pi}, 0, \ldots, 0)^T$ for real spherical harmonics,  $\multipliersone = (1, \ldots, 1)^T$ for the hat functions
basis,
$\multipliersone = (1, 0, 1, 0, \ldots)^T$ for the partial moments in slab geometry and $(1, 0, 0, 0, 1, 0, 0, 0, \ldots)^T$ for the
partial moment basis in three dimensions).

Since in this case, \eqref{eq:ODE} is
linear and the parameters $\scattering$, $\absorption$, $\source$
are
time-independent, we solve it explicitly using matrix exponentials, trivially
obtaining a realizable second-order accurate solution of \eqref{eq:ODEexact}.
\begin{remark}
  Note that in this specific situation, the solution of this sub-step does not depend on the moment
  closure used in the flux system.
\end{remark}
Using the matrix exponential and the variation of constants formula, the solution to
\eqref{eq:ODE} is
\begin{equation}
  \label{eq:rhssolutionbymatexp}
  \moments(t,\z) = \matexp\left(\left({\scattering} \isomatrix - \crosssection
    \eyematrix\right) t\right) \moments(0,\z) +
  \left(\int\limits_0^t \matexp
  \left(\left({\scattering} \isomatrix - \crosssection
      \eyematrix\right) (t-s) \right) \mathrm{d}s \right) \ints{\basis\source}
\end{equation}
As $\isomatrix$ and $\eyematrix$ commute, we have
\begin{equation}
  \label{eq:rhsmatexp}
  \matexp\left(\left({\scattering} \isomatrix - \crosssection \eyematrix\right)
  t\right) =
  \matexp\left({\scattering} t \isomatrix\right) \matexp\left(-\crosssection t
  \eyematrix\right) = \matexp\left({\scattering} t \isomatrix\right)
  \left(\exp(-\crosssection t \right)\eyematrix )
\end{equation}
It remains to compute the matrix exponential of ${\scattering} t
  \isomatrix$. As $\isomatrix \isotropicmoment(\moments) =
  \isotropicmoment(\moments)$,
we have that $\isomatrix^k = \isomatrix$ for all
$k \geq 1$. It follows
\begin{equation}
  \label{eq:isomatexp}
  \matexp\left({\scattering} t \isomatrix\right)
  = \sum \limits_{k=0}^\infty \frac{\left({\scattering} t
    \isomatrix\right)^k}{k!}
  = \eyematrix + \sum \limits_{k=1}^\infty \frac{\left({\scattering} t
    \right)^k}{k!} \isomatrix
  = \eyematrix + (\exp(\scattering t) - 1) \isomatrix
\end{equation}
Inserting \eqref{eq:isomatexp} in \eqref{eq:rhsmatexp}, we get
\begin{equation}
  \label{eq:rhsmatexp2}
  \matexp\left(\left({\scattering} \isomatrix - \crosssection \eyematrix\right)
  t\right) = \exp(-\crosssection t) \left(\eyematrix + (\exp(\scattering t) -
  1)
  \isomatrix\right)
\end{equation}
Plugging \eqref{eq:rhsmatexp2} into \eqref{eq:rhssolutionbymatexp}, we finally get
\begin{align}
  \begin{split}
    \moments(t)
    &= \exp(-\crosssection t) \left(\eyematrix + (\exp(\scattering t) -
    1) \isomatrix\right) \moments(0,\z) \\
    &\hspace{2.4cm}+
    \left(\int\limits_0^t \exp(-\crosssection (t-s)) \left(\eyematrix +
    (\exp(\scattering (t-s)) -
    1) \isomatrix\right) \mathrm{d}s \right) \ints{\basis\source} \\
    &= \exp(-\crosssection t) \left(\eyematrix + (\exp(\scattering t) -
    1) \isomatrix\right) \moments(0,\z) \\
    &\hspace{3.5cm}+
    \left(\frac{1-\exp(-\crosssection t)}{\crosssection} \left(\eyematrix -
      \isomatrix\right) +
    \frac{1-\exp(-\absorption t)}{\absorption} \isomatrix\right)
    \ints{\basis\source} \\
    &= e^{-\absorption t} \left(e^{-\scattering t} \moments(0,\z) +
    \left(1 - e^{-\scattering t}\right) \isotropicmoment(\moments(0,\z)) \right) \\
    &\hspace{5cm}+ \left(\frac{1-e^{-\crosssection t}}{\crosssection} \left(\eyematrix -
    \isomatrix\right) +
    \frac{1-e^{-\absorption t}}{\absorption} \isomatrix\right)
    \ints{\basis\source}
    \label{eq:matexpsolutiongeneral}
  \end{split}
\end{align}

If the source is also isotropic then $\isomatrix \ints{\basis\source} =
  \ints{\basis\source} = \ints{\basis}\source$ and \eqref{eq:matexpsolutiongeneral} simplifies
to
\begin{equation}
  \moments(t,\z)
  = e^{-\absorption t} \left(e^{-\scattering t} \moments(0,\z) +
  \left(1 - e^{-\scattering t}\right) \isotropicmoment(\moments(0,\z)) \right)
  +
  \frac{1-e^{-\absorption t}}{\absorption} \ints{\basis} \source
  \label{eq:matexpsolutionisotropicsource}
\end{equation}
which can easily be calculated without explicit calculation of $\isomatrix$ or any matrix
operations.

\subsection{Flux system}
Let us now consider the non-stiff part \eqref{eq:SplittedSystema}. This can be solved using standard
realizability-preserving methods \cite{Schneider2015b,Alldredge2015,Schneider2016,Chidyagwai2017}, which will be summarized in the
following.

The standard finite-volume scheme in semi-discrete form for \eqref{eq:SplittedSystema} looks like
\begin{equation}
  \label{eq:FV1}
  \dt \momentscellmean{\cellindex} = \numericalFlux(\momentspv{\cellindex+\frac12}^-,\momentspv{\cellindex+\frac12}^+)-\numericalFlux(\momentspv{\cellindex-\frac12}^-,\momentspv{\cellindex-\frac12}^+),
\end{equation}
where $\numericalFlux$ is a numerical flux function. The simplest example is the global
Lax-Friedrichs flux
\begin{equation}
  \label{eq:globalLF}
  \numericalFlux(\moments[1], \moments[2]) = \dfrac{1}{2} \left( \Flux_3(\moments[1]) + \Flux_3(\moments[2]) - \viscosityconstant ( \moments[2] -
    \moments[1]) \right).
\end{equation}
The numerical viscosity constant $\viscosityconstant$ is taken as the global estimate of the
absolute value of the largest eigenvalue of the Jacobian
$\Flux'$. In our case, the viscosity constant can be set to $\viscosityconstant = 1$,
because for the moment systems used here the largest eigenvalue is bounded in absolute value by one
\cite{Alldredge2015,Schneider2016,Olbrant2012}.

Another possible choice is the kinetic flux \cite{Schneider2015b,Hauck2010,Garrett2014,Frank2006}
\begin{equation}
  \label{eq:kineticFlux}
  \numericalFlux(\moments[1], \moments[2]) = \intp{\SCheight\basis\ansatz[1]}+ \intm{\SCheight \basis \ansatz[2]},\qquad
  \moments[\basisindex] = \ints{\basis \ansatz[\basisindex]},\quad \basisindex\in\{1,2\},
\end{equation}
where \(\intp{\cdot}\) and \(\intm{\cdot}\) denote integration over the
positive and negative half intervals \([-1, 0]\) and
\([0, 1]\), respectively.
The kinetic flux is less diffusive than the (global) Lax-Friedrichs flux and admits a more
consistent
implementation of kinetic boundary conditions (see \cite{Schneider2016} and
\cref{sec:initialandboundaryfvscheme}). For this reason, we will use
\eqref{eq:kineticFlux} in all our computations.

\subsubsection{Polynomial reconstruction}
\label{subsubsec:polynomial_reconstruction}
The value $\momentspv{\cellindex+\frac12}$ is the evaluation of a suitable linear
reconstruction of $\moments$ at the cell interface $\z_{\cellindex+\frac12}$. In
one dimension, it can be obtained from a minmod reconstruction\footnote{Other
  second-order accurate reconstructions like WENO \cite{Jiang1995,cravero2018cool}
  are also possible.}
\begin{align*}
  \moments[\cellindex](\z) & = \momentscellmean{\cellindex}+\moments[\cellindex]'\left(\z-\z_\cellindex\right)                                                                                                                                                   \\
  \moments[\cellindex]'    & = \frac{1}{\gridwidthz}\minmodfunc{\momentscellmean{\cellindex+1}-\momentscellmean{\cellindex},\momentscellmean{\cellindex}-\momentscellmean{\cellindex-1},\frac12(\momentscellmean{\cellindex+1}-\momentscellmean{\cellindex-1})},
\end{align*}
where $\minmodfunc{\cdot}$ is the minmod function
\begin{align*}
  \minmodfunc{a_1,a_2,a_3} & = \begin{cases}
    \operatorname{sign}(a_1) \min\{|a_1|,|a_2|,|a_3|\} & \text{if }
    \operatorname{sign}(a_1) = \operatorname{sign}(a_2) =
    \operatorname{sign}(a_3),                                         \\
    0                                                  & \text{else}.
  \end{cases}
\end{align*}
applied componentwise. We then set $\momentspv{\cellindex+\frac12}^- = \moments[\cellindex](\z_{\cellindex+\frac12})$ and $\momentspv{\cellindex+\frac12}^+ = \moments[\cellindex+1](\z_{\cellindex+\frac12})$.

To avoid spurious oscillations, the reconstruction has to be performed in characteristic variables.
They are found by transforming the moment vector $\moments$
using the matrix $\eigenvectormatrix_{\cellindex}$, whose columns hold the eigenvectors of the Jacobian
$\Flux'(\momentscellmean{\cellindex})$ evaluated at the cell mean $\momentscellmean{\cellindex}$. This leads to
\begin{equation}
  \moments[\cellindex]' = \frac{1}{\gridwidthz}\eigenvectormatrix_{\cellindex}\minmodfunc{\eigenvectormatrix_{\cellindex}^{-1}\left(\momentscellmean{\cellindex+1}-\momentscellmean{\cellindex}\right),\eigenvectormatrix_{\cellindex}^{-1}\left(\momentscellmean{\cellindex}-\momentscellmean{\cellindex-1}\right),\frac12\eigenvectormatrix_{\cellindex}^{-1}\left(\momentscellmean{\cellindex+1}-\momentscellmean{\cellindex-1}\right)}.
\end{equation}
For details on the eigenvalue computation see \cref{sec:eigenvalueproblems}.
In several dimension, we perform a dimension-by-dimension reconstruction as in
\cite{Titarev2004} using the minmod reconstruction in characteristic variables
in each one-dimensional reconstruction step.

\subsubsection{Realizability-preservation}
While this already gives us a second-order scheme, we do not have the
realizability-preserving property yet. To achieve this, we need to apply a
realizability limiter, ensuring that $\moments[\cellindex](\z)$ is point-wise
realizable at the interface nodes $\z
  \in\{\z_{\cellindex-\frac12},\z_{\cellindex+\frac12} \}$. We follow the construction
from \cite{Alldredge2015}.

We replace $\moments[\cellindex]$ with the limited version
\begin{equation}\label{eq:realizabilitylimiting}
  \momentslimited{\cellindex} = \limitervariable\momentscellmean{\cellindex}
  +(1-\limitervariable)\moments[\cellindex] =
  \momentscellmean{\cellindex}+(1-\limitervariable)\moments[\cellindex]'\left(\z-\z_\cellindex\right).
\end{equation}
The limiter variable $\limitervariable\in[0,1]$ dampens the reconstruction from
unlimited ($\limitervariable=0$) to first-order ($\limitervariable=1$). Assuming
that $\momentscellmean{\cellindex}\in\RD{\basis}{}$ is realizable, there exists at
least one $\limitervariable$ (namely $\limitervariable = 1$) such that
$\momentslimited{\cellindex}(\z)\in\closure{\RD{\basis}{}}$ for every $\z$ in the set
of quadrature nodes (where \(\closure{\RD{\basis}{}}\) is the closure of
\(\RD{\basis}{}\)).
Since the realizable set is a convex cone, and by
continuity, it is guaranteed that there exists a minimal $\limitervariable$
satisfying this assumption. We are thus searching for the solution of the
minimization problem
\begin{equation}
  \max\limits_{\z \in\{\z_{\cellindex-\frac12},\z_{\cellindex+\frac12}
    \}}\min\limits_{\limitervariable\in[0,1]} \limitervariable \\
  \quad \text{ s. t. }\quad \momentslimited{\cellindex}(\z) \in \closure{\RD{\basis}{}}
\end{equation}
In practice, given some interface node $\z$, we search for the intersection of
the line $\momentslimited{\cellindex}(\z)$ (wrt.\ $\limitervariable$) with
the boundary of the realizable set, check if the value is in
$[0,1]$ and store it in the case that it is.

For the presented first-order moment models, the solution of the above limiter
problem can often be computed explicitly (see Section
\ref{sec:Limiting} for more details).

If we discretize \eqref{eq:FV1} with a second-order SSP Runge-Kutta (RK) scheme,
e.g.\ Heun's
method or the general $\rkstages$ stage SSP ERK${}_2$
\cite{Gottlieb2005,Ketcheson2008}, a realizability-preserving scheme is obtained
under a CFL-like condition if reconstruction and limiting is performed in every
stage of the RK method, see Lemma \ref{lem:CFLcond}.

\subsubsection{Solving the optimization problem}\label{subsection:OptimizationProblem}
For the minimum-entropy models, in each stage of the time stepping scheme
for~\eqref{eq:SplittedSystema}, we
have to solve the optimization problem~\eqref{eq:OptProblem} once in each cell (to compute the
Jacobians) and twice at each interface of the computational mesh (one optimization problem for the
left and right reconstructed value at the interface, respectively). This usually accounts for
the majority of computation time which makes it mandatory to pay special
attention to the implementation of the optimization algorithm. In this section,
we will focus on the stopping criteria for the optimization algorithm.
For details on the implementation,
see \cref{subsection:OptimizationProblemDetails}.

Recall that the objective function in the dual problem~\eqref{eq:dual} is
\begin{equation}\label{eq:optObjective}
  \optObjective_{\moments}(\multipliers) = \ints{\ld{\entropy}(\basis \cdot \multipliers)} - \moments \cdot \multipliers.
\end{equation}
The gradient and Hessian of \(\optObjective\) are given by
\begin{equation}\label{eq:optGradient}
  \optGradient_{\moments}(\multipliers) = \multipliersGradient \optObjective(\multipliers) = \ints{\basis \ld{\entropy}'(\basis \cdot \multipliers)} -
  \moments
\end{equation}
and
\begin{equation}\label{eq:optHessian}
  \optHessian(\multipliers) = \jacobianop_{\multipliers} \optGradient(\multipliers) =
  \ints{\basis \basis^T \ld{\entropy}''\left(\basis \cdot \multipliers\right)},
\end{equation}
respectively. Note that \(\ld{\entropy}'' > 0\) since we assumed that \(\entropy\) (and
thus also \(\ld{\entropy}\)) is
strictly convex, and remember that
the basis functions contained in \(\basis\) are linearly independent. As a consequence, the Hessian
\(\optHessian\)
is symmetric positive definite.

To find a minimizer of \(\optObjective\), we are searching for a root of the gradient
\(\optGradient\) using Newton's method.
For simplicity, we will restrict ourselves
to Maxwell-Boltzmann entropy~\eqref{eq:EntropyM} such that
\(\ld{\entropy}'(\multipliers \cdot \basis) = \exp(\multipliers \cdot \basis)\).
We use the Newton algorithm from \cite{Schneider2016,Alldredge2015,Schneider2016c}
with some adaptions. Before entering the algorithm
for the moment vector \(\moments\) we rescale it to
\begin{equation}
  \normalizedmoments \coloneqq \frac{\moments}{\density(\moments)}
\end{equation}
such that
\(\density(\normalizedmoments) = 1\). Let
\(\densityMult(\multipliers) = \ints{\exp(\basis \cdot \multipliers)}\) be the mapping \(\multipliers\mapsto
\density(\moments(\multipliers[]))\) which maps a set
of multipliers \(\multipliers\) to the density of its associated moment.
If the optimization algorithm for \(\normalizedmoments\) stops at an iterate \(\multiplierstilde\),
we return
\begin{equation}\label{eq:beta_to_alpha}
  \numericalmultipliers = \multiplierstilde + \multipliersone
  \log\left(\frac{\density(\moments)}{\densityMult(\multiplierstilde)}\right)
\end{equation}
where \(\multipliersone\) is the multiplier with the
property that \(\multipliersone \cdot \basis \equiv 1\) (see~\cref{sec:sourcesystem}).
This ensures that the local particle density is preserved exactly:
\begin{equation}\label{eq:densityadjustmultipliers}
  \densityMult\left(\numericalmultipliers\right) =
  \ints{\exp(\basis \cdot \numericalmultipliers)} = \ints{\exp(\basis \cdot \multiplierstilde)}
  \frac{\density(\moments)}{\densityMult(\multiplierstilde)} =
  \density(\moments).
\end{equation}
Given
\(\opttol\in\Rpos\), \(\opttoleps\in(0,1)\), we will stop the Newton iteration
at iterate \(\multiplierstilde\) if
\begin{subequations}\label{eq:stoppingcriteria}
  \begin{align}\label{eq:firststoppingcriterion}
    (1) & \ \ \norm{\optGradient_{\normalizedmoments}(\multiplierstilde)}{2} < \opttolmod \coloneqq
    \begin{cases}
      \frac{\opttol}{\left(1+\norm{\normalizedmoments}{2}
      \right)\density(\moments) + \opttol}                     & \text{ if }\quad  \basis = \fmbasis                \\
      \frac{\opttol}{\left(1+\sqrt{\momentnumber}\norm{\normalizedmoments}{2}
      \right)\density(\moments) + \sqrt{\momentnumber}\opttol} & \text{ if }\quad  \basis\in\{\hfbasis, \pmbasis\},
    \end{cases} \quad \text{and }                                                         \\
    (2) & \ \ \moments - (1-\opttoleps) \moments(\numericalmultipliers)               \in\RDpos{\basis},
    \label{eq:thirdstoppingcriterion}
  \end{align}
\end{subequations}
where \(\numericalmultipliers\) is obtained from \(\multiplierstilde\) by~\eqref{eq:beta_to_alpha}
and, as always, \(\momentnumber\) is the number of moments.

In the following, we will explain the rationale behind these stopping criteria.

The first criterion guarantees that the gradient of the objective function is sufficiently small.
\begin{lemma}\label{lem:StoppingCrit1}
  Let \(\opttol\in\Rpos\). If~\eqref{eq:firststoppingcriterion} is fulfilled,
  we have that \(\norm{\optGradient_{\moments}(\numericalmultipliers)}{2} \leq \opttol\).
\end{lemma}
\begin{proof}
  First note that, by its definition~\eqref{eq:optGradient}, the gradient can be written as
  \begin{equation}
    \optGradient_{\moments}(\numericalmultipliers) = \ints{\basis \exp(\basis \cdot \numericalmultipliers)} - \moments = \moments(\numericalmultipliers) - \moments
  \end{equation}
  where \(\moments(\numericalmultipliers)\) is the moment vector corresponding to the multipliers
  \(\numericalmultipliers\).

  Let
  \(\multipliersmod \coloneqq \multiplierstilde-\multipliersone \log(\densityMult(\multiplierstilde))\).
  Then it follows that
  \begin{align*}
    \density(\moments)\norm{\optGradient_{\normalizedmoments}(\multipliersmod)}{2}
     & = \density(\moments)\norm{\left(\ints{\basis\exp(\basis \cdot \multipliersmod)}-\normalizedmoments\right)}{2}      \\
     & = \norm{\ints{\basis\exp\Big(\basis \cdot \big(\multipliersmod
    + \multipliersone \log(\density(\moments))\big)\Big)} - \density(\moments)\normalizedmoments}{2}                      \\
     & = \norm{\ints{\basis\exp(\basis \cdot \numericalmultipliers)} - \density(\moments)\normalizedmoments}{2}           \\
     & = \norm{\moments(\numericalmultipliers) - \moments}{2} = \norm{\optGradient_{\moments}(\numericalmultipliers)}{2}.
  \end{align*}
  Consequently, we have
  \begin{align}
    \begin{split} \label{eq:prooffirststopping}
      \norm{\optGradient_{\moments}(\numericalmultipliers)}{2} =
      \density(\moments) \norm{\optGradient_{\normalizedmoments}(\multipliersmod)}{2}
      &= \density(\moments) \norm{\moments(\multipliersmod) - \normalizedmoments}{2} \\
      &= \density(\moments) \norm{\frac{1}{\densityMult(\multiplierstilde)} \moments(\multiplierstilde)
        - \normalizedmoments}{2} \\
      &= \density(\moments) \norm{\frac{1}{\densityMult(\multiplierstilde)} \moments(\multiplierstilde)
        - \frac{1}{\densityMult(\multiplierstilde)} \normalizedmoments
        + \frac{1}{\densityMult(\multiplierstilde)} \normalizedmoments
        - \normalizedmoments}{2} \\
      &\leq \density(\moments) \left(\frac{1}{\densityMult(\multiplierstilde)} \norm{\moments(\multiplierstilde)
        - \normalizedmoments}{2}
      + \norm{\frac{1}{\densityMult(\multiplierstilde)} \normalizedmoments
        - \normalizedmoments}{2} \right) \\
      &= \density(\moments) \left(\frac{1}{\densityMult(\multiplierstilde)}
      \norm{\optGradient_{\normalizedmoments}(\multiplierstilde)}{2}
      + \frac{\abs{1-\densityMult(\multiplierstilde)}}{\densityMult(\multiplierstilde)}
      \norm{\normalizedmoments}{2}\right).
    \end{split}
  \end{align}
  Moreover,
  \begin{align}
    \begin{split}\label{eq:densityminusoneformula}
      \abs{\densityMult(\multiplierstilde) - 1}
      &= \abs{\densityMult(\multiplierstilde) - \density(\normalizedmoments)} \\
      &= \abs{\density(\moments(\multiplierstilde)) - \density(\normalizedmoments)} \\
      &= \abs{\multipliersone \cdot \left(\moments(\multiplierstilde) - \normalizedmoments\right)} \\
      &= \begin{cases}
        \abs{\sum_{\basisindex=0}^{\momentnumber-1}
          \big(
          \momentcomp{\basisindex}(\multiplierstilde) -
          \normalizedmomentcomp{\basisindex}
          \big)}
         & \text{ for hat functions,}                       \\[4pt]
        \abs{\sum_{\basisindexvar=0}^{\hankelhalfind-1}
          \big(\momentcomp{\basisindexvar,0}(\multiplierstilde) -
          \normalizedmomentcomp{\basisindexvar,0}
          \big)}
         & \text{ for partial moments,} \phantom{aaaaaaaaa} \\[4pt]
        \abs{
          \momentcomp{0}(\multiplierstilde) -
          \normalizedmomentcomp{0}}
         & \text{ for full moments,}                        \\
      \end{cases}
    \end{split}
  \end{align}
  where in the last step we used the explicit
  forms of \(\multipliersone\) for the different bases (see \cref{sec:sourcesystem}).
  Since
  \begin{equation}
    \moments(\multiplierstilde) - \normalizedmoments = \optGradient_{\normalizedmoments}(\multiplierstilde),
  \end{equation}
  it follows from~\eqref{eq:densityminusoneformula} and~\eqref{eq:firststoppingcriterion} that
  \begin{equation*}
    \abs{\densityMult(\multiplierstilde) - 1} \leq
    \norm{\optGradient_{\normalizedmoments}(\multiplierstilde)}{1} \leq
    \sqrt{\momentnumber} \norm{\optGradient_{\normalizedmoments}(\multiplierstilde)}{2}
    \leq \sqrt{\momentnumber} \opttolmod
  \end{equation*}
  for partial moments and hat functions, and
  \begin{equation*}
    \abs{\densityMult(\multiplierstilde) - 1} \leq
    \norm{\optGradient_{\normalizedmoments}(\multiplierstilde)}{\infty} \leq
    \norm{\optGradient_{\normalizedmoments}(\multiplierstilde)}{2}
    \leq \opttolmod
  \end{equation*}
  for full moments,
  which directly gives
  \begin{equation*}
    \frac{1}{\densityMult(\multiplierstilde)} \leq \frac{1}{1- \sqrt{\momentnumber} \opttolmod}
    \qquad \text{ and }\qquad
    \frac{1}{\densityMult(\multiplierstilde)} \leq \frac{1}{1-\opttolmod},
  \end{equation*}
  respectively.
  Inserting these bounds in~\eqref{eq:prooffirststopping}, we finally obtain
  \begin{equation*}
    \norm{\optGradient_{\moments}(\numericalmultipliers)}{2}
    \leq \density(\moments) \left( \frac{\opttolmod}{1- \sqrt{\momentnumber} \opttolmod}
    + \frac{\sqrt{\momentnumber} \opttolmod}{1- \sqrt{\momentnumber} \opttolmod}
    \norm{\normalizedmoments}{2} \right)
    = \density(\moments) \left(
    \frac{\opttolmod\left(1+\sqrt{\momentnumber}\norm{\normalizedmoments}{2}\right)}{1-\sqrt{
        \momentnumber}
      \opttolmod} \right)
    = \opttol
  \end{equation*}
  for partial moments and hat functions, and similarly for full moments,
  removing \(\sqrt{\momentnumber}\) accordingly.
\end{proof}
The second criterion~\eqref{eq:thirdstoppingcriterion} ensures
that the ansatz density~\eqref{eq:psiME} corresponding to the multiplier
\(\numericalmultipliers\) obtained
from the Newton iteration is close enough to a representing density for the moments \(\moments\).
\begin{lemma}\label{lem:stoppingcriterion2}
  Let \(\moments\in\RDpos{\basis}{}\) and let \(\opttoleps\in(0,1)\),
  \(\numericalmultipliers\in\R^\momentnumber\)
  such that the second stopping criterion~\eqref{eq:thirdstoppingcriterion} holds.
  Then there exists a representing distribution \(\distribution\) for \(\moments\),
  i.e.\ \(\moments = \ints{\basis \distribution}\), such that
  \begin{equation} \label{eq:distributionrequirement}
    \frac{\distribution}{\ansatz[\moments(\numericalmultipliers)]} =
    \frac{\distribution}{\ld{\entropy}'(\numericalmultipliers \cdot \basis)} \geq 1 - \opttoleps.
  \end{equation}
\end{lemma}
\begin{proof}
  If~\eqref{eq:thirdstoppingcriterion} is satisfied, there exists a
  positive distribution \(\distribution^\opttoleps\) such that
  \begin{equation}
    \ints{\distribution^\opttoleps \basis} =
    \moments - (1-\opttoleps) \moments(\numericalmultipliers).
  \end{equation}
  Then
  \begin{equation}\label{eq:representingdistproof}
    \distribution \coloneqq \distribution^\opttoleps + (1-\opttoleps) \ansatz[\moments(\numericalmultipliers)]
  \end{equation}
  is a positive distribution representing 
  \(\moments\) and satisfying~\eqref{eq:distributionrequirement}.
\end{proof}
In \cref{sec:timesteprestrictionsecondorder} we will use \cref{lem:stoppingcriterion2}
to show that the scheme is realizability-preserving although the optimization problems
are only solved approximately.
\begin{remark}
  Note that
  \begin{equation*}
    \moments - (1-\opttoleps) \moments = \opttoleps \moments
  \end{equation*}
  is realizable for all \(\opttoleps > 0\). Due to the openness of \(\RDpos{\basis}{}\), there
  exists a
  \(\delta > 0\) s.t.\ \(\altvariable{\moments} \in \RDpos{\basis}\) for all
  \(\altvariable{\moments}\) with
  \(\norm{\altvariable{\moments} - \opttoleps \moments}{2} < \delta\). Note further that
  \begin{align*}
    \norm{\moments - (1-\opttoleps) \moments(\numericalmultipliers) - \opttoleps \moments}{2}
     & = \norm{(1-\opttoleps)(\moments - \moments(\numericalmultipliers))}{2}     \\
     & = (1-\opttoleps) \norm{\moments - \moments(\numericalmultipliers)}{2}      \\
     & = (1-\opttoleps) \norm{\optGradient_{\moments}(\numericalmultipliers)}{2},
  \end{align*}
  so~\eqref{eq:thirdstoppingcriterion} is fulfilled if \(\norm{\optGradient_{\moments}(\numericalmultipliers)}{2} \leq
  \frac{\delta}{1-\opttoleps}\), i.e.\ if our numerical solution to the approximation problem is close
  enough to the exact solution. For moments \(\moments\) that are very close to the realizable
  boundary
  (so \(\delta\) is very small and in addition \(\optHessian\) may be very badly conditioned), we
  might
  not be able to achieve such an accuracy. In that case, we either use a regularized version of
  \(\moments\) (see~\eqref{eq:RegularizedMoments}) or disable linear reconstruction
  (see \cref{linear_reconstruction_step} in \cref{section:ImplementationDetails}). Choosing \(\opttoleps\)
  closer to \(1\) makes it easier to fulfil~\eqref{eq:thirdstoppingcriterion} at the expense of
  smaller time steps (see \cref{lem:CFLcond}). In our computations, we used the value
  \(\opttoleps = 0.01\) which worked well in practice.
\end{remark}

\subsubsection{Time-step restriction}\label{sec:timesteprestrictionsecondorder}
Now we are able to put all the things together to show that one forward-Euler step of our scheme
\eqref{eq:FV1} is indeed realizability-preserving.
\begin{lemma}
  \label{lem:CFLcond}
  The finite volume scheme \eqref{eq:FV1}, using the kinetic flux
  \eqref{eq:kineticFlux} and the stopping criteria from Section
  \ref{subsection:OptimizationProblem}, on a rectangular grid in $\dimension$ dimensions
  preserves realizability under the CFL-like condition
  \begin{equation}
    \timestep < \frac{1-\opttoleps}{2\sqrt{\dimension}} \gridwidthx.
  \end{equation}
\end{lemma}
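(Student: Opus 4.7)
The plan is to exhibit $\momentscellmean{\cellind}^{n+1}$ as the moment $\ints{\basis\,\Phi}$ of an explicit non-negative density $\Phi$ on $\sphere$; realizability then follows from \defnref{def:RealizableSet}. I treat the slab case first with $\lambda = \dtstepsize/\dzstepsize$. Linearity of the minmod reconstruction gives $\momentscellmean{\cellind}^n = \tfrac12(\momentspv{\cellind-\frac12}^+ + \momentspv{\cellind+\frac12}^-)$, and the stopping criterion \eqref{eq:thirdstoppingcriterion} furnishes for each reconstructed interface moment a non-negative representing density that dominates $(1-\opttoleps)$ times the inexact optimizer's ansatz, i.e., one can write the representing density as $(1-\opttoleps)\hat\psi^{\opttol} + r$ with $r\ge 0$. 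Substituting this decomposition of $\momentscellmean{\cellind}^n$, expanding the two kinetic fluxes \eqref{eq:kineticFlux} into their upwind half-sphere pieces, and collecting terms by ansatz density, the update takes the form $\momentscellmean{\cellind}^{n+1} = \ints{\basis\,\Phi}$ where $\Phi$ splits on each half-sphere into four contributions: a ``self'' term of coefficient $\tfrac{1-\opttoleps}{2}-\lambda|\SCheight|$ (potentially negative), a manifestly non-negative ``cross'' term of coefficient $\tfrac{1-\opttoleps}{2}$ coming from the opposite-sign half of the cell-mean split, an upstream ``neighbor'' contribution with coefficient $\lambda|\SCheight|$, and the residuals $\tfrac12 r\ge 0$. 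Non-negativity of $\Phi$ thus reduces to $\tfrac{1-\opttoleps}{2}-\lambda|\SCheight|\ge 0$ uniformly on $[-1,1]$, equivalently $\lambda\le\tfrac{1-\opttoleps}{2}$, which is the $\dimension=1$ case.

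For a uniform rectangular mesh in $\dimension$ dimensions, dimension-by-dimension linear reconstruction yields $\momentscellmean{\cellind}=\tfrac12(\moments_{\cellind,k}^- + \moments_{\cellind,k}^+)$ for each coordinate direction $k=1,\dots,\dimension$, and the $k$-th kinetic flux splits along $\SC_k>0$ versus $\SC_k<0$. Running the previous argument in each direction and collecting the self contributions at cell $\cellind$, the analogue of $\Phi$ reads as a chosen representing density for $\momentscellmean{\cellind}^n$ minus $\sum_{k=1}^{\dimension}\lambda_k|\SC_k|\,\hat\psi^{\opttol,\mathrm{side}}_{\cellind,k}(\SC)$, plus non-negative neighbor and residual terms, where ``side'' is $+$ if $\SC_k>0$ and $-$ otherwise. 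Bundling the $\dimension$ per-direction cell-mean splits into one representing density and using strict positivity of $\hat\psi^{\opttol} = \exp(\basis^T\multipliers)>0$, non-negativity of $\Phi$ reduces to the single $\SC$-pointwise inequality $\sum_k\lambda_k|\SC_k|\le\tfrac{1-\opttoleps}{2}$. For $\lambda_k=\lambda$, the Cauchy--Schwarz bound $\sum_{k=1}^{\dimension}|\SC_k|\le\sqrt{\dimension}\,|\SC|=\sqrt{\dimension}$ on $\sphere$ then gives $\sqrt{\dimension}\,\lambda\le\tfrac{1-\opttoleps}{2}$, which is precisely the stated CFL.

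The main obstacle is the multi-dimensional assembly: a naive uniform split of $\momentscellmean{\cellind}^n$ into $\dimension$ equal pieces (one per coordinate reconstruction) decouples the directions into $\dimension$ independent per-direction conditions $\lambda|\SC_k|\le\tfrac{1-\opttoleps}{2\dimension}$, yielding only the weaker bound $\lambda\le\tfrac{1-\opttoleps}{2\dimension}$. Recovering the $\sqrt{\dimension}$-improvement requires choosing the cell-mean representing density so that its pointwise lower bound simultaneously controls every per-direction self contribution, after which a single application of Cauchy--Schwarz on the unit sphere closes the argument. The neighbor-cell fluxes and the residuals $r$ from the inexact optimizer play no role in this bottleneck, and the regularization procedure of \secref{subsection:OptimizationProblem} affects only the (already realizable) input $\momentscellmean{\cellind}^n$ and not the subsequent Euler update.
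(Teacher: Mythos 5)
Your slab-geometry argument is correct and is essentially the paper's own: writing the representing density supplied by \eqref{eq:thirdstoppingcriterion} as $(1-\opttoleps)\,\hat\psi^{\opttol}+r$ with $r\ge 0$ is the same device as the bound $\hat\psi^{\opttol}\le \distribution/(1-\opttoleps)$ used in \eqref{eq:schemepreservingproof}, the cell-mean split $\momentscellmean{\cellind}=\tfrac12\bigl(\momentspv{\cellind+\frac12}^-+\momentspv{\cellind-\frac12}^+\bigr)$ is identical, and both routes land on $\dtstepsize<\tfrac{1-\opttoleps}{2}\dzstepsize$ for $\dimension=1$.

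The gap is in the multi-dimensional step, and it sits exactly where you yourself place the ``main obstacle''. Your reduction of pointwise non-negativity of $\Phi$ to the single inequality $\sum_k\lambda_k\abs{\SC_k}\le\tfrac{1-\opttoleps}{2}$ requires one \emph{fixed} non-negative function $\Psi$ with $\ints{\basis\Psi}=\momentscellmean{\cellind}$ that simultaneously dominates $\tfrac{1-\opttoleps}{2}\,\hat\psi^{\opttol,\mathrm{out}}_{k}$ for \emph{every} direction $k$ in the given octant. The per-direction splits only furnish, for each $k$ separately, a representing density dominating that one direction's outflow ansatz; ``bundling the $\dimension$ per-direction cell-mean splits into one representing density'' is precisely the step that needs a construction, and none is given. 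A constant convex combination $\sum_k w_k\tfrac{\psi^{\mathrm{out}}_k+\psi^{\mathrm{in}}_k}{2}$ is a legitimate representing density but, as you note, yields only $\dtstepsize<\tfrac{1-\opttoleps}{2\dimension}\dxstepsize$; a simultaneous dominator of all $\dimension$ outflow ansatz densities need not exist in general, since these can have nearly disjoint angular supports while each carries density up to about $2\density(\momentscellmean{\cellind})$, so their pointwise maximum can carry more mass than any representing density of the cell mean admits. Hence the Cauchy--Schwarz finish is applied to an object that has not been shown to exist. For comparison, the paper closes this step by keeping the convex-combination form but optimizing the partition-of-unity weights pointwise in $\SC$, choosing $w_l=\SC_l/\norm{\SC}{1}$, which turns the per-direction constraints into $\lambda\,\norm{\SC}{1}\le\tfrac{1-\opttoleps}{2}$ and then uses $\norm{\SC}{1}\le\sqrt{\dimension}$; note that an $\SC$-dependent weighting raises the same question you would have to answer (whether the weighted combination still has the cell-mean moments), so this is genuinely the delicate point of the lemma rather than a routine assembly. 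As written, your proposal establishes the statement only with $\tfrac{1-\opttoleps}{2\dimension}$ in place of $\tfrac{1-\opttoleps}{2\sqrt{\dimension}}$.
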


\begin{proof}
  Adapted from \cite[Theorem~3.19]{Schneider2016}. As we are using time stepping schemes that consist of a
  convex combination of Euler forward steps, it
  is enough to show realizability preservation in a single Euler forward step.  Consider the
  one-dimensional ($\dimension=1$) case first. The update formula in one
  step is
  \begin{align*}
    \momentstime{j}{\timeindex+1}
     & = \momentstime{j}{\timeindex} - \frac{\timestep}{\gridwidthx}
    \left( \kineticFlux(\momentspv{j+\frac12}^{-,\opttol},\momentspv{j+\frac12}
      ^{+,\opttol})-\kineticFlux(\momentspv{j-\frac12}^{-,\opttol},\momentspv{
    j-\frac12 } ^{+,\opttol}) \right)                                                        \\
     & = \momentstime{j}{\timeindex} - \frac{\timestep}{\gridwidthx}
    \left( \intp{\SCheight\basis\ansatz[j+\frac12]^{-,\opttol}} +
    \intm{\SCheight \basis \ansatz[j+\frac12]^{+,\opttol}} -
    \intp{\SCheight\basis\ansatz[j-\frac12]^{-,\opttol}} - \intm{\SCheight \basis
    \ansatz[j-\frac12]^{+,\opttol}} \right) \nonumber                                        \\
     & = \ints{\basis\distribution[j]^{(\timeindex)}} -
    \frac{\timestep}{\gridwidthx} \bigg( \ints{ \max(\SCheight, 0) \basis
    \left(\ansatz[j+\frac12]^{-,\opttol} - \ansatz[j-\frac12]^{-,\opttol} \right)}           \\
     & \hspace{7cm}+ \ints{ \min(\SCheight, 0) \basis \left(\ansatz[j+\frac12]^{+,\opttol} -
    \ansatz[j-\frac12]^+ \right)} \bigg)  \nonumber                                          \\
     & = \ints{\basis \left(\distribution[j]^{(\timeindex)} -
      \frac{\timestep}{\gridwidthx} \left(\max(\SCheight, 0)
      \left(\ansatz[j+\frac12]^{-,\opttol} - \ansatz[j-\frac12]^{-,\opttol} \right) +
      \min(\SCheight, 0) \left(\ansatz[j+\frac12]^{+,\opttol} -
    \ansatz[j-\frac12]^{+,\opttol} \right) \right) \right)}  \nonumber                       \\
     & \eqqcolon \ints{\basis
      \distribution[j]^{(\timeindex+1)}} \nonumber
  \end{align*}
  where $\distribution[j]^{(\timeindex)}$ is an arbitrary representing density for
  $\momentstime{j}{\timeindex}$ and $\ansatz[j-\frac12]^{+,\opttol}$ is the
  ansatz distribution obtained from the approximate solution of the optimization
  problem. To preserve realizability, we have to ensure that
  $\distribution[j]^{(\timeindex+1)} \geq 0$ for all $\SCheight \in [-1, 1]$ and all
  cells $j$.

  For $\SCheight > 0$, after stripping away positive terms and using
  $\SCheight \leq 1$,
  we have
  \begin{equation}\label{eq:schemepreservingproof}
    \distribution[j]^{(\timeindex+1)} \geq \ansatz[j]^{(\timeindex)} -
    \frac{\timestep}{\gridwidthx} \ansatz[j+\frac12]^{-,\opttol}
    \geq \ansatz[j]^{(\timeindex)} -
    \frac{\timestep}{\gridwidthx} \frac{\distribution[j+\frac12]^-}{1-\opttoleps},
  \end{equation}
  where $\distribution[j+\frac12]^-$ is the distribution from
  \eqref{eq:distributionrequirement}.

  We have that
  \begin{equation}
    \momentspv{j\pm\frac12}^{\mp} = \momentstime{j}{\timeindex} \pm \frac12
    \moments[\cellindex]'
  \end{equation}
  where $ \moments[\cellindex]'$ is the (limited) slope on cell $\cellindex$.
  Thus we have
  \begin{equation}
    \momentstime{j}{\timeindex} = \frac{\momentspv{j+\frac12}^{-} +
      \momentspv{j-\frac12}^{+}}{2}
  \end{equation}
  and therefore a representing density for $\momentstime{j}{\timeindex}$ is
  $\frac{\distribution[j+\frac12]^- + \distribution[j-\frac12]^+}{2}$.
  Inserting this in
  \eqref{eq:schemepreservingproof} gives
  \begin{equation}\label{eq:schemepreservingproof2}
    \distribution[j]^{(\timeindex+1)}
    \geq \frac{\distribution[j+\frac12]^- +
      \distribution[j-\frac12]^+}{2} - \frac{\timestep}{\gridwidthx}
    \frac{\distribution[j+\frac12]^-}{1-\opttoleps}
    = \left(\frac12 - \frac{\timestep}{\gridwidthx(1-\opttoleps)}\right)
    \distribution[j+\frac12]^- +
    \frac{\distribution[j-\frac12]^+}{2}
  \end{equation}
  This is positive under the time step restriction
  \begin{equation}
    \timestep < \frac{(1-\opttoleps)}{2} \gridwidthx.
  \end{equation}
  The case $\SCheight \leq 0$ follows in a similar way.

  In $\dimension$ dimensions, the update formula changes to
  \begin{equation}
    \momentstime{\mathbf{j}}{\timeindex+1}
    = \momentstime{\mathbf{j}}{\timeindex} - \sum_{l=1}^{\dimension}
    \frac{\timestep}{\gridwidthx_l} \left(
    \kineticFlux_l(\momentspv{j_l+\frac12}^-,\momentspv{j_l+\frac12}
      ^+)-\kineticFlux_l(\momentspv{j_l-\frac12}^-,\momentspv{j_l-\frac12}^+) \right)
  \end{equation}
  where $\mathbf{j} = (1, \ldots, \dimension)^T$ is an index tuple. As in one
  dimension, we define the representing density
  $\distribution[\mathbf{j}]^{(\timeindex+1)}$ and only regard the case $\SC_l >
    0\, \forall l$, the other cases follow
  similarly. After stripping away positive
  terms we are left with
  \begin{equation}\label{eq:schemepreservingproof3d}
    \distribution[\mathbf{j}]^{(\timeindex+1)} \geq
    \distribution[\mathbf{j}]^{(\timeindex)} -
    \frac{\timestep}{\gridwidthx} \sum_{l=1}^{\dimension} \SC_l
    \frac{\distribution[j_l+\frac12]^-}{1-\opttoleps},
  \end{equation}
  where $\gridwidthx = \min_l \gridwidthx_l$.
  We proceed as in one dimension and note that
  \begin{equation}
    \distribution[\mathbf{j}]^{(\timeindex)} = \sum_{l=1}^\dimension w_l
    \frac{\distribution[j_l+\frac12]^- + \distribution[j_l-\frac12]^+}{2}
  \end{equation}
  is a representing density for $\momentstime{\mathbf{j}}{\timeindex}$ for
  any
  partition of unity $\sum_{l=1}^\dimension w_l = 1$.
  Inserting this ansatz in \eqref{eq:schemepreservingproof3d} gives
  \begin{equation}
    \distribution[\mathbf{j}]^{(\timeindex+1)}
    \geq \sum_{l=1}^{\dimension} \left(\frac{w_l}{2} - \SC_l
    \frac{\timestep}{\gridwidthx(1-\opttoleps)}\right)
    \distribution[j_l+\frac12]^- + \sum_{l=1}^{\dimension} w_l
    \frac{\distribution[j_l-\frac12]^+}{2}
  \end{equation}
  This is positive if
  \begin{equation}\label{eq:timesteprestrictionrealizability3d}
    \frac{\timestep}{\gridwidthx} < \min_l \frac{1-\opttoleps}{2} \frac{w_l}{\SC_l}
    \hspace{0.5 cm} \forall \, \SC \text{ with } \SC_l > 0 \ \forall\,
    l=1,\ldots,\dimension.
  \end{equation}
  So for given $\SC$ we have to find a partition of unity $\mathbf{w}$
  such
  that the right-hand side of \eqref{eq:timesteprestrictionrealizability3d} is
  maximal, i.e., we want to find
  \begin{equation}\label{eq:partitionofunityoptproblem}
    \min_{\norm{\boldsymbol{\SC}}{2} \leq 1} \
    \max_{\norm{\mathbf{w}}{1} = 1} \ \min_{l \in \{1,\ldots,\dimension\}} \
    \frac{w_l}{\SC_l}
  \end{equation}
  Obviously, the maximum is attained if $\frac{w_{l_1}}{\SC_{l_1}}
    = \frac{w_{l_2}}{\SC_{l_2}}$ for all $l_1, l_2$
  (otherwise we could
  increase the $w_l$ which belongs to the minimum and decrease the other ones a
  little). Taking the partition of unity property into account, we thus have to
  choose $w_l = \frac{\SC_l}{\norm{\SC}{1}}$. Inserting this in
  \eqref{eq:partitionofunityoptproblem} gives
  \begin{equation*}
    \min_{\norm{\boldsymbol{\SC}}{2} \leq 1} \
    \max_{\norm{\mathbf{w}}{1} = 1} \ \min_{l \in \{1,\ldots,\dimension\}} \
    \frac{w_l}{\SC_l}
    =
    \min_{\norm{\boldsymbol{\SC}}{2}\leq 1}
    \frac{1}{\norm{\boldsymbol{\SC}}{1}}
    \leq
    \min_{\boldsymbol{\SC}}
    \frac{\norm{\boldsymbol{\SC}}{2}}{\norm{\boldsymbol{\SC}}{1}}
    = \frac{1}{\sqrt{\dimension}}.
  \end{equation*}

  Using this in \eqref{eq:timesteprestrictionrealizability3d}, we end up with the
  time-step restriction
  \begin{equation*}
    \timestep < \frac{1-\opttoleps}{2\sqrt{\dimension}} \gridwidthx.
  \end{equation*}
\end{proof}

\section{Implementation details}\label{section:ImplementationDetails}
We implemented the whole scheme in the generic \Cpp framework 
\texttt{DUNE}~\cite{Bastian2008,Bastian2008a}, more specifically in the \texttt{DUNE}
generic
discretization toolbox \texttt{dune-gdt}~\cite{Schindler2017} and the
\texttt{dune-xt}-modules~\cite{Milk2017, Milk2017a}.

As mentioned above, we advance the flux system in time using Heun's method, which is a second-order
strong-stability preserving
Runge-Kutta scheme~\cite{Gottlieb2005}. In each stage of the Runge-Kutta scheme, we perform the
following steps:
\begin{enumerate}
  \item Solve the optimization problem for the cell means \(\momentscellmean{\cellindex}\) in each
        grid cell (see \cref{subsection:OptimizationProblemDetails}). If regularization is needed, replace
        \(\momentscellmean{\cellindex}\) by its regularized
        version\footnote{This formally destroys the consistency of the scheme.
          However, since regularization rarely occurs (and only near the realizability boundary), this effect can usually be neglected in practice.}
        (see \cref{sec:regularization}).
  \item Reconstruct the values at the cell interfaces using linear reconstruction in
        characteristic variables (see \cref{subsubsec:polynomial_reconstruction}), using the
        solution of the optimization problems from step 1 to calculate the Jacobians.
  \item Perform the realizability limiting (see \cref{sec:Limiting}).
  \item Solve the optimization problem for all reconstructed values \(\moments[\cellindex\pm\frac{1}{2}]\).
        If the solver fails for a reconstructed value, disable the
        linear reconstruction in that cell.\label{linear_reconstruction_step}
  \item Evaluate the kinetic flux~\eqref{eq:kineticFlux} and update the stage values according
        to~\eqref{eq:FV1}.
\end{enumerate}

In the following, we will give some details on the implementation of these steps.

\subsection{Implementation of the minimum-entropy solver}\label{subsection:OptimizationProblemDetails}
Our solver for the optimization problem is based on the algorithm
from~\cite{Alldredge2014}. It uses a Newton-type algorithm
with Armijo line search, i.e.\
to find a minimizer of the objective function \(\optObjective\) (see \eqref{eq:optObjective}),
we are searching for a root of the
gradient \(\optGradient\) (see \eqref{eq:optGradient})
in the Newton direction \(\optNewtonDirection(\multipliers)\) which solves
\begin{equation}
  \optHessian(\multipliers)\optNewtonDirection(\multipliers) = -\optGradient(\multipliers).
\end{equation}
and then update the multipliers as
\begin{equation}
  \multipliers[k+1] = \multipliers[k] + \zeta_k \optNewtonDirection(\multipliers[k])
\end{equation}
where \(\zeta_k\) is determined by a backtracking line search such that
\begin{equation}
  \optObjective(\multipliers[k+1]) < \optObjective(\multipliers[k])
  + \xi \zeta_k {\optGradient(\multipliers[k])} \cdot \optNewtonDirection(\multipliers[k])
\end{equation}
with \(\xi \in (0,1)\).

We stop the optimization if the new iterate \(\multipliers[k+1]\) satisfies the
stopping criteria \eqref{eq:stoppingcriteria}, except that we use
\begin{equation}
  \norm{\optGradient_{\normalizedmoments}(\multiplierstilde)}{2} < \min(\opttolmod, \opttol)
\end{equation}
as the first stopping criterion instead of simply using~\eqref{eq:firststoppingcriterion}.
This avoids numerical difficulties for moments
with small density, where \(\opttolmod\) is in the order of \(1\) and thus some
iterates \(\multiplierstilde\) with very large (in absolute values) entries might
fulfil the stopping criterion by chance. Moreover, checking the second stopping
criterion~\eqref{eq:thirdstoppingcriterion} might be
quite expensive (depending on the basis \(\basis\)). We therefore check this criterion only if
additionally
\begin{equation}\label{eq:secondstoppingcriterion}
  1-\opttoleps < \exp(-\left(\norm{\newtonDirection(\multiplierstilde)}{1} +
    \abs{\log{\densityMult(\multiplierstilde)}} \right))
\end{equation}
holds. This criterion approximately ensures~\eqref{eq:distributionrequirement}
(see~\cite{Schneider2016, Alldredge2014}) but, in general, is much easier
to
evaluate than~\eqref{eq:thirdstoppingcriterion}. For the \(\HFMN\) models, however,
checking realizability is just checking positivity, so in that case we do not
need to check~\eqref{eq:secondstoppingcriterion} first.

To improve the performance and stability of the algorithm, we use
several additional techniques which we will detail in the following.
The values of the algorithms' parameters that we use in all computations are given in
\tabref{tab:parameters}.
\begin{table}[htb]
  \renewcommand{\arraystretch}{1.3}
  \begin{center}
    \addtolength{\tabcolsep}{-1.5pt}
    \begin{tabular}{cccccccc}
      \toprule
      \multicolumn{8}{c}{Newton algorithm}                                                                                                                    \\
      \cmidrule{1-8}
      $k_0$                 & $k_{max}$ & $\opttoleps$   & $\epsilon$               & $\chi$ & $\xi$            &
      $\tau$                & $\{r_l\}$                                                                                                                       \\
      $500$                 & $1000$    & $10^{-2}$      & $2^{-52}$                & $1/2$  & $10^{-3}$        & $10^{-9} $ &
      $\{0,10^{-8},10^{-6},10^{-4}, 10^{-3}$                                                                                                                  \\
                            &           &                &                          &        &                  &            & $10^{-2}, 0.05, 0.1, 0.5, 1\}$ \\
      \toprule
                            &           & \multicolumn{2}{c}{Realizability limiter} &        & \multicolumn{2}{c}{Minima}                                     \\
      \cmidrule{3-4} \cmidrule{6-7}
                            &           & $\limitereps $ & $\limiterepstilde$       &        & $\densityvacuum$ &
      $\distributionvacuum$ &                                                                                                                                 \\
                            &           & $10^{-11}$     & $10^{-11}$               &        & $10^{-8}$        &
      $\densityvacuum/\ints{1}$                                                                                                                               \\
      \bottomrule
    \end{tabular}
    \caption{Parameter choice for the different aspects of the simulation. Notation for the
      Newton algorithm as in \cite{Alldredge2014}.}
    \label{tab:parameters}
  \end{center}
\end{table}

\subsubsection{Adaptive change of basis}\label{sec:adaptivechangeofbasis}
Though the Hessian \(\optHessian(\multipliers)\) is positive definite and thus invertible, it may be
very badly conditioned,
especially for multipliers \(\multipliers\) corresponding to moments \(\moments(\multipliers)\)
close to the boundary
of the realizable set. Moreover, in general, the integral in the definition~\eqref{eq:optHessian}
of \(\optHessian\)
can only be calculated approximately using a numerical quadrature (see \cref{sec:QuadratureRules}).
If the quadrature is not sufficiently accurate,
the approximate Hessian may have a significantly worse condition or may even be numerically
singular.

To improve this situation, a change of
basis can be performed after each Newton iteration such that
the Hessian at the current iterate becomes the unit matrix
in the new basis \cite{Alldredge2014}. We use this procedure in our
implementation for all bases except for the
hat function bases \(\hfbasis\).

For the hat function basis, all matrices and vectors required in the optimization algorithm
are sparse and exploiting this fact in the implementation greatly speeds up the
computations. Including the change of basis destroys the sparsity and thus harms
performance. In theory, this could be compensated by faster convergence and thus less iterations of
the algorithm due to the condition improvements. Further, the algorithm with change of basis
might use regularization less frequently and thus introduce less errors in the solution, as shown
for the full moments in~\cite{Alldredge2014}. We thus compared the algorithm with and without
change of basis in several test problems. The differences in the results were negligible in all
tests cases and the version without change of basis was significantly faster. We thus do not use
the adaptive change of basis for the hat functions.

The first-order partial moments have a similarly simple structure as the hat functions, so the
adaptive change of
basis might also not be needed for these models.
However, the change of basis does not have a significant performance impact in this case
as the support of each basis function is
restricted to a single interval or spherical triangle, and thus all matrix
operations can be performed on the \(2\times2\) or \(4\times4\)
submatrices corresponding to an
interval in 1d and a spherical triangle in 3d, respectively. Similar, quadrature
evaluations can be performed for each interval or spherical triangle separately.
For this reason, we include the adaptive change of basis in our optimization
algorithm for the partial moments.

For details on the change
of basis algorithm see~\cite{Alldredge2014}.
Note that the stopping criteria \eqref{eq:stoppingcriteria} are
computed in the original basis such that
knowledge of the change of basis algorithm is not required
to understand the presentation in this paper.

\subsubsection{Regularization}\label{sec:regularization}
For tests with strong absorption,
the local particle density may become very small in parts of the domain.
As a consequence, also the entries
of the Hessian \(\optHessian\) become very small
which may cause numerical problems.
We thus choose a ``vacuum'' density
\(\distributionvacuum\) with corresponding local particle density
\(\densityvacuum = \ints{\distributionvacuum}\). We then enforce
a minimum local particle density of \(\densityvacuum\)
by replacing moments \(\moments\) with local particle density
\(\density(\moments) < \densityvacuum\) by the
isotropic moment with vacuum density \(\densityvacuum\).
Obviously, this approach leads to a violation of the conservation properties of the scheme.
However, since we only replace moments with
very small local particle densities by moments with slightly larger
but still very small densities, the effect should be negligible in practice.

Additionally,
if the optimizer fails for a moment vector \(\moments\)
(for example, by reaching a maximum number of iterations or being
unable to solve for the Newton direction)
we use the isotropic-regularization
technique from~\cite{Alldredge2014}, i.e.\
we replace \(\moments\) by the regularized moment vector
\begin{equation}\label{eq:RegularizedMoments}
  \regularizedmoments \coloneqq (1 - \regularizationParameter) \moments + \regularizationParameter \isomatrix
  \moments.
\end{equation}
and retry the optimization.
If the optimizer still fails,
we increase \(\regularizationParameter\) until the optimizer succeeds,
which is guaranteed at least for \(\regularizationParameter = 1\) where
\(\regularizedmoments\) is isotropic.
As the regularized moment vector \(\regularizedmoments\) always has the same local particle
density as the original moment vector \(\moments\),
this technique does not violate the mass conservation of the scheme
but it may potentially completely alter the solution.
In practice, regularization is only used rarely and if it is used, a
small regularization parameters is usually sufficient.

\subsubsection{Caching}\label{subsec:caching}
We use two types of caching.
First, for each grid cell we
store the moment vector 
from the last time step and
the corresponding multiplier 
obtained by entropy minimization. In this way we do not have to
solve the optimization problem again if the moment vector in that grid cell
did not change during the last time step. In addition, we store the
last few solutions of the minimization problem with corresponding input moment vectors
per thread of execution, so if several grid cells contain the
same values,
we only have to perform the optimization once and then use the cached values. If we encounter a
moment
vector that can not
be found in the caches, we take the moment vector that is closest to the input vector (in one-norm)
and use the corresponding multiplier
as an initial guess.

\subsubsection{Linear solvers}\label{sec:linearsolvers}
In each iteration of the Newton scheme,
we have to apply the inverse
of a positive definite Hessian matrix. We assemble the matrices
using the quadratures described in \cref{sec:QuadratureRules}.
Inversion is then done by computing a Cholesky factorization
of the assembled matrix. For the full moment models,
the Hessian matrices are dense, so we use
the \texttt{LAPACK}~\cite{Anderson1999} routine \texttt{dpotrf}
to compute the factorization and then use
\texttt{dtrsv} to actually invert the linear systems.
For the \(\PMMN\) models,
the Hessian is block-diagonal (each
block corresponds to one interval/triangle of the partition)
such that we can perform the Cholesky decomposition
independently for each block. For the \(\HFMN\)
models in one dimension, the Hessian matrices are
tridiagonal, so we can use the specialized \texttt{LAPACK}
algorithms \texttt{dpttrf} and \texttt{dpttrs}.
In three dimensions, the \(\HFMN\)
Hessians are not tridiagonal anymore
but still sparse, so we use the sparse
\texttt{SimplicialLDLT} solver from
the \texttt{Eigen} library~\cite{Guennebaud2010}.

\subsection{Solving the eigenvalue problems}\label{sec:eigenvalueproblems}
To avoid spurious oscillations, the reconstruction has to be performed in characteristic
coordinates (see \cref{subsubsec:polynomial_reconstruction}). For that reason, we have to
compute the eigenvectors of the flux Jacobians
\begin{equation}\label{eq:fluxjacobian}
  \Flux'(\momentscellmean{\cellindex})
  = \velocityHessian(\momentscellmean{\cellindex}) \optHessian^{-1}(\momentscellmean{\cellindex})
\end{equation}
where
\begin{equation}
  \velocityHessian(\moments) \coloneqq
  \ints{\SCheight\basis\basis^T\ld{\entropy}''\left(\basis \cdot \multipliers(\moments)\right)}
\end{equation}
and
\begin{equation}
  \optHessian(\moments) \coloneqq \optHessian(\multipliers(\moments)) = \ints{\basis \basis^T \ld{\entropy}''\left(\basis \cdot \multipliers(\moments)\right)}
\end{equation}
(compare \cref{subsection:OptimizationProblem}).
Note that, in general,
the Jacobian~\eqref{eq:fluxjacobian} is not symmetric. However,
since \(\velocityHessian\) is symmetric and \(\optHessian\) symmetric positive definite
(see \cref{subsection:OptimizationProblem}),
we can see that \(\Flux'(\momentscellmean{\cellindex})\) is similar to
a symmetric matrix, i.e.\
\begin{equation*}
  \optHessian^{-\frac{1}{2}}\Flux'\optHessian^{\frac{1}{2}}
  = \optHessian^{-\frac{1}{2}}\velocityHessian\optHessian^{-1}\optHessian^{\frac{1}{2}}
  = \optHessian^{-\frac{1}{2}}\velocityHessian\optHessian^{-\frac{1}{2}},
\end{equation*}
and thus has real eigenvalues. In our implementation, we explicitly compute the matrix
representation
and then use an eigensolver for non-symmetric matrices (\texttt{LAPACK}'s
\texttt{dgeevx}) to
obtain the eigen decomposition.
Unfortunately, though the Jacobian is a real matrix with real eigen values
and thus also admits a set of real eigenvectors, the
standard solvers for non-symmetric eigen problems (apart from \texttt{dgeevx},
we also tested the \texttt{EigenSolver} of the \texttt{Eigen}
library~\cite{Guennebaud2010})
often return complex eigenvectors. We thus add a step to compute
real eigenvectors from the complex ones. Note that if
\begin{equation}
  \Set{\Vz_{\basisindex} = \Vy_{2\basisindex} + \imaginaryunit \Vy_{2\basisindex+1} \given
    \basisindex = 0, \ldots, k-1}
\end{equation}
is a set of linearly independent complex eigenvectors to the same
eigenvalue \(\eigenvalue\) for the Jacobian
\(\Flux'\), where \(\imaginaryunit\) is the imaginary unit
and \(\Vy_{\basisindexvar} \in \R^{\momentnumber}\) are real vectors,
then
\begin{equation}\label{eq:eigenvectorset}
  \Set{\Vy_{\basisindexvar} \given \basisindexvar = 0,  \ldots, 2k-1}
\end{equation}
is a set of \(2k\) real eigenvectors for \(\Flux'\).
Moreover, there are at least \(k\) linearly independent vectors in this set. To see that,
assume the opposite, i.e.\ that any \(k\) vectors from the set~\eqref{eq:eigenvectorset}
are linearly dependent. Without loss of generality, we assume that
every vector in~\eqref{eq:eigenvectorset} can be written as a linear combination of
the first \(k-1\) vectors, i.e.\
\begin{equation}
  \Vy_{\basisindexvar} = \sum_{r=0}^{k-2} a_{\basisindexvar,r}
  \Vy_{r},
  \quad \basisindexvar=0,\ldots,2k-1,
\end{equation}
with coefficients \(a_{\basisindexvar,r} \in \R\).
Then, the \(k\) vectors \(\Vz_{\basisindexvar}\) can also be written
as (complex) linear combinations of these \(k-1\) real vectors
\begin{equation}
  \Vz_{\basisindex}
  =  \Vy_{2\basisindex} + \imaginaryunit \Vy_{2\basisindex+1}
  = \sum_{r=0}^{k-2} \left(a_{2\basisindex,r} + \imaginaryunit\, a_{2\basisindex + 1,r}\right) \Vy_{r},
  \quad \basisindex = 0, \ldots, k-1,
\end{equation}
and thus cannot be linearly independent.

Consequently, to get real eigenvectors for \(\Flux'\) from the complex ones
computed by the eigensolver, we first sort the eigenvectors into sets belonging
to the same eigenvalue and then perform a Gram-Schmidt process with the real and imaginary parts
for each of these sets.
\begin{remark}
  While this procedure works reasonably well, a better approach would probably be to use the
  structure
  of the Jacobian and, instead of solving the non-symmetric eigenvalue problem
  \begin{equation}
    \left(\Flux'\right) \Vz = \velocityHessian \optHessian^{-1} \Vz = \eigenvalue \Vz,
  \end{equation}
  solve the symmetric generalized eigenvalue problem~\cite{Martin1971,BunseGerstner1984}
  \begin{equation}
    \velocityHessian \tilde{\Vz} = \eigenvalue \optHessian \tilde{\Vz}
  \end{equation}
  and then get the eigenvectors as \(\Vz = \optHessian\tilde{\Vz}\).
  Since the matrices \(\velocityHessian\) and \(\optHessian\)
  are both symmetric and \(\optHessian\) is positive definite,
  we can use a specialized algorithm like \texttt{LAPACK}'s \texttt{dsygv}
  and directly obtain real eigenvectors.
  Moreover, we can take advantage of the sparsity of these matrices and, e.g.,
  use a generalized eigenvalue algorithm aimed at band matrices like \texttt{dsbgv}.
  In contrast, explicit assembly of the term \(\velocityHessian\optHessian^{-1}\)
  might destroy the structure and result in a dense matrix even if the
  two factor matrices are sparse.
\end{remark}
For the partial-moment models, the eigen decomposition
can be done block-wise on the \(2\times2\) or \(4\times4\) matrix blocks
which reduces the cubic complexity of the eigen decomposition~\cite{Pan1999} to a linear
complexity
for increasing number of moments and thus greatly accelerates computations
for large problems.

\subsection{Realizability limiting}\label{sec:Limiting}
The linear reconstruction process in the finite volume scheme does not guarantee preservation of
realizability. Thus, we need an additional limiting step \eqref{eq:realizabilitylimiting} to
ensure that we are able to solve the optimization problem \eqref{eq:OptProblem} for the
reconstructed values. Since, in general, we cannot solve the integrals occurring in the
optimization
problem analytically and have to approximate them by a numerical quadrature
$\angularQuadrature$,
the admissible moment vectors are further restricted to the numerically realizable set
($\angularQuadrature$-realizable set)
\begin{equation}\label{eq:numericallyrealizableset2}
  \RQ{\basis} = \left\{\moments~:~\exists \distribution(\SC)\ge 0,\, \density =
  \intQ{\distribution} > 0,
  \text{ such that } \moments =\intQ{\basis\distribution} \right\} \subset
  \closure{\RD{\basis}{}},
\end{equation}
where
for an integrable function $\angularQuadratureFunction$, $\intQ{\angularQuadratureFunction}
  = \sum_{\angularQuadratureIndex=0}^{\angularQuadratureNumber-1}
  \angularQuadratureWeights{\angularQuadratureIndex}
  \angularQuadratureFunction(\angularQuadratureNodesS{\angularQuadratureIndex})\approx\ints{
    \angularQuadratureFunction}$ is the
approximation of the corresponding integral $\ints{\cdot}$ with
the quadrature rule $\angularQuadrature$. In general, the numerically realizable set is a strict
subset of the analytically realizable set.

The numerically realizable set can be described as the convex hull of the basis function values at
the quadrature nodes (see \cite{Alldredge2014} for the Legendre basis, the proof can be easily
adapted
for the other bases)
\begin{equation}\label{eq:convexhullrealizableset}
  \RQ{\basis}|_{\density = 1} =
  \interior{\convexhull{\{\basis(\SC_\angularQuadratureIndex)\}_{
  \angularQuadratureIndex=0}^{\angularQuadratureNumber-1}
  \}} }.
\end{equation}
If $\density$ depends linearly on $\moments$ it follows
\begin{equation}\label{eq:convexhullrealizablesetless1}
  \RQ{\basis}|_{\density < 1} =
  \interior{\convexhull{\mathbf{0}, \{\basis(\SC_\angularQuadratureIndex)\}_{
  \angularQuadratureIndex=0}^{\angularQuadratureNumber-1}
  \}} }.
\end{equation}
We do not want the limited moments to be too close to to the boundary of the numerically realizable
set as we are not able to solve the optimization problem \eqref{eq:OptProblem} in that case (see
\cite{Alldredge2012}).
Moving the limited value away from the boundary can be done in several ways.
A simple but often sufficient method can be employed for all limiters presented in this section. We
simply add a small parameter $\limiterepstilde$ to the final limiter variable
$\limitervariable$
\cite{Schneider2016}. A problem with this approach is that the connecting line between
$\moments$
and $\momentscellmean{}$ might be almost parallel to the boundary which possibly results in a
limited moment that is still too close to the boundary.
Another approach is to require a fixed distance $\limitereps$ to the boundary of
$\RQ{\basis}$,
i.e., to limit to the $(\angularQuadrature,\limitereps)$-realizable set
\begin{equation}\label{eq:epsnumericallyrealizableset}
  \RQeps{\basis} = \left\{\moments \in \RQ{\basis}
  \text{ such that } \distancebd(\moments, \partial\RQ{\basis}) \geq \limitereps \right\},
\end{equation}
where $\distancebd(\cdot, \partial\RQ{\basis})$ is the Euclidian distance to $\partial\RQ{\basis}$.
Limiting to this set is possible whenever $\momentscellmean{}$ is farther than
$\limitereps$ away
from the boundary. If $\momentscellmean{}$ is already in the $\limitereps$-range of
the boundary,
we disable reconstruction in that cell.

Unfortunately, checking whether a reconstructed value lies within the numerically realizable set is
not trivial in general. In the following, we detail the limiting procedure for the different
models. For the remainder of this section, let $\momentscellmean{}$ be the moment vector before
reconstruction and $\moments$ a reconstructed moment vector \footnote{In one dimension, there are
  always two reconstructed values per grid cell (one at each interface), so each of the limiters
  described in the following is applied to both values and the larger $\limitervariable$ is used
  in
  \eqref{eq:realizabilitylimiting}. In several dimensions, both reconstruction and limiting are
  performed independently for each coordinate direction.}.
Let further
$\momentcompcellmean{\basisindex}$ and $\momentcomp{\basisindex}$ be the $\basisindex$-th component of
$\momentscellmean{}$ and $\moments$, respectively.

\subsubsection{\texorpdfstring{$\Mn$ models}{MN models}}
In~\cite{Alldredge2015,Schneider2016},
the half space representation
for the convex hull \eqref{eq:convexhullrealizablesetless1}
was explicitly calculated before starting the time stepping,
yielding
\begin{equation}\label{eq:halfspacerealizableset}
  \RQone{\basis}
  = \Set{\moments\in\R^\momentnumber
    \given \RQonehalfspacematcol_{\facetindex} \cdot \moments <
    \RQonehalfspaceveccomp_\facetindex,\
    \facetindex \in \{0,\ldots,\numfacets-1\}},
\end{equation}
where \(\numfacets\) is the number of facets of the convex hull. During the time
stepping,
the intersection
of the connecting line between
\(\moments\) and \(\momentscellmean{}\) and each facet
can then be computed efficiently by
solving
\begin{equation*}
  \RQonehalfspacematcol_{\facetindex} \cdot \momentslimited{} = \RQonehalfspaceveccomp_{\facetindex}
\end{equation*}
(compare~\eqref{eq:realizabilitylimiting})
for the limiter variable. We thus obtain
\begin{align} \label{eq:halfspacelimiter}
  \limitervariable = \max_{\facetindex=0,\ldots,\numfacets-1} \limitervariable_\facetindex,
  \qquad\qquad
  \limitervariable_\facetindex & =
  \begin{cases} \frac{\RQonehalfspaceveccomp_\facetindex - \RQonehalfspacematcol_\facetindex \cdot \moments}{
      \RQonehalfspacematcol_\facetindex \cdot
      \left(\momentscellmean{} - \moments\right)}
      & \text{if } \frac{\RQonehalfspaceveccomp_\facetindex
      - \RQonehalfspacematcol_\facetindex \cdot \moments}{ \RQonehalfspacematcol_\facetindex \cdot
    \left(\momentscellmean{} - \moments\right)} \in [0,1],  \\
    0 & \text{else.}
  \end{cases}
\end{align}
If \(\density\left(\moments\right) \geq 1\) or
\(\density\left(\momentscellmean{}\right) \geq 1\), the moments can
simply be rescaled before applying the limiter~\cite{Alldredge2015,Schneider2016}.
Alternatively,
we can ignore the facet corresponding to the condition
\(\density = \multipliersone \cdot \moments \leq 1\) which gives the
half-space description
for the full numerically realizable set~\eqref{eq:convexhullrealizableset}.
In that case, no rescaling is necessary and we
can easily ensure a minimum distance of \(\limitereps\) to the realizable boundary by
moving each facet in normal direction before calculating the intersections,
resulting in
\begin{equation}\label{eq:halfspacelimitereps}
  \tilde{\RQonehalfspaceveccomp}_\facetindex  = \RQonehalfspaceveccomp_\facetindex - \limitereps
  \norm{\RQonehalfspacematcol_\facetindex}{2}
\end{equation}
instead of \(\RQonehalfspaceveccomp_\facetindex\) in~\eqref{eq:halfspacelimiter}. As for the other
limiters, we
disable reconstruction if \(\momentscellmean{}\) does not lie within the
\(\limitereps\)-realizable set, i.e.\ if
\begin{align}
  \exists\, \facetindex \text{ s.t. } \RQonehalfspacematcol_\facetindex \cdot \momentscellmean{} \geq
  \tilde{\RQonehalfspaceveccomp}_\facetindex.
\end{align}
However, explicit calculation of the convex hull is only viable for a relatively small number of
moments (such that the convex hull has to be calculated in a low-dimensional space) or very sparse
quadratures (such that the convex hull has to be calculated from a small number of points).
For a larger number of moments and a reasonable fine quadrature, the construction of the convex
hull
takes excessively long. Moreover, even when the convex hull is available, the performance of this
approach might be unacceptable as the number of facets grows rapidly with both the number of
moments
and the number of quadrature points~\cite{Schneider2016}. We thus use this approach
only for the partial moments (see \cref{sec:limitingpmmn}) where we only have
to calculate low-dimensional convex hulls.

For the \(\MN\) models, as proposed in \cite[Section~3.62]{Schneider2016}, we instead
utilize the quadrature
description \eqref{eq:numericallyrealizableset2} of the numerically realizable set and limit by
solving the linear program (LP)
\begin{subequations}
  \label{eq:convexhulllimiter}
  \begin{align}
    \min \limitervariable                 \\
    \text{s.t.} \sum\limits_{\angularQuadratureIndex=0}^{\angularQuadratureNumber}
    \tilde{\angularQuadratureWeights{\angularQuadratureIndex}}
    \basis\left(\SC_\angularQuadratureIndex\right) = (1-\limitervariable) \moments
    + \limitervariable \momentscellmean{} \\
    \label{eq:linearprogramlimiter3}
    \limitervariable \geq 0,\,
    \tilde{\angularQuadratureWeights{\angularQuadratureIndex}} >
    0,
  \end{align}
\end{subequations}
where \(\tilde{\angularQuadratureWeights{\angularQuadratureIndex}}\) should not be confused with
\(\angularQuadratureWeights{\angularQuadratureIndex}\)
but rather represents
\(\angularQuadratureWeights{\angularQuadratureIndex}\distribution(\SC_\angularQuadratureIndex)\)
for the sought representing distribution \(\distribution\).
This approach removes the prohibitively costly explicit calculation of the convex hull. However,
the runtime
cost during the time stepping algorithm might be considerably higher as a linear program has to be
solved for every reconstructed value.

Instead of using a single limiter variable $\limitervariable$,
principally, we can limit each component of $\moments$ independently. This has been
done, e.g., in
the context of the Euler equations in \cite{Zhang2010}. However, if the limiting is naively
performed in ordinary coordinates, spurious oscillations may occur, as the limiting in ordinary
coordinates may actually increase the slope in one of the characteristic components. In our
implementation, we thus limit each of the characteristic components independently. Let
$\eigenvectormatrix$ be the matrix of eigenvectors of the Jacobian $\Flux'(\momentscellmean{})$ and
let $\momentschar = \eigenvectormatrix^{-1} \moments$, be the respective moment vectors in
characteristic coordinates. Then we can find limiter variables $\boldsymbol{\limitervariable} =
  (\limitervariable_{1}, \ldots, \limitervariable_{\momentnumber})$ for each
characteristic component
by solving the LP
\begin{align}
  \label{eq:LPlimitercompwise}
  \begin{aligned}
    \min \mathbf{1} \cdot \boldsymbol{\limitervariable} \\
    \text{s.t.}
    \sum\limits_{\angularQuadratureIndex=0}^{\angularQuadratureNumber}
    \tilde{\angularQuadratureWeights{\angularQuadratureIndex}}
    \basis\left(\SC_\angularQuadratureIndex\right) =
    \eigenvectormatrix
    \begin{pmatrix}
      (1-\limitervariable_1) \momentscharcomp[1] + \limitervariable_1
      \momentscharmeancomp{1} \\
      \vdots                  \\
      (1-\limitervariable_{\momentnumber}) \momentscharcomp[\momentnumber] +
      \limitervariable_{\momentnumber}
      \momentscharmeancomp{\momentnumber}
    \end{pmatrix}                          \\
    \boldsymbol{\limitervariable} \geq \boldsymbol{0},\,
    \tilde{\angularQuadratureWeights{\angularQuadratureIndex}} >
    0
  \end{aligned}
  \qquad\Longleftrightarrow\qquad
  \begin{aligned}
    \min \mathbf{1} \cdot \boldsymbol{\limitervariable}                     \\
    \text{s.t.}
    \begin{pmatrix}
      \mathbf{B} & \tilde{\eigenvectormatrix}
    \end{pmatrix}
    \begin{pmatrix}
      \boldsymbol{\angularQuadratureWeights{}} \\
      \boldsymbol{\limitervariable}
    \end{pmatrix} = \eigenvectormatrix \momentschar = \moments \\
    \begin{pmatrix}
      \boldsymbol{\angularQuadratureWeights{}} \\
      \boldsymbol{\limitervariable}
    \end{pmatrix} \geq \mathbf{0}
  \end{aligned}
\end{align}
where the matrix $\tilde{\eigenvectormatrix}$ is defined as $\tilde{V}_{ij} =
  V_{ij} \left(\momentscharcomp[j] - \momentscharmeancomp{j}\right)$ and the
$i$-th column of
$\mathbf{B}$ is $\basis\left(\SC_\angularQuadratureIndex\right)$.

For the LP-based limiter, it is not clear how to ensure a fixed distance
$\limitereps$ to the boundary. We thus use the method of adding a small parameter
$\limiterepstilde$ to the final limiter variable by replacing $\limitervariable
  \geq 0$ by
$\limitervariable \geq -\limiterepstilde$ in
\eqref{eq:linearprogramlimiter3} and using $\limitervariable+ \limiterepstilde$
instead of $\limitervariable$ if it is in the interval $[-\limiterepstilde, 1 -
      \limiterepstilde]$.

For checking realizability of $\moments$, which is needed for the stopping
criterion \eqref{eq:thirdstoppingcriterion} in the optimization algorithm, we
solve the simpler LP
\begin{subequations}
  \begin{align}
    \label{eq:LPrealizabilitycheck}
    \min 0                                                         \\
    \text{s.t.} \
    \mathbf{B} \boldsymbol{\angularQuadratureWeights{}} = \moments \\
    \boldsymbol{\angularQuadratureWeights{}} \geq \mathbf{0}.
  \end{align}
\end{subequations}
Note that the limiters presented thus far are not restricted to the $\Mn$ models
but could be used
for any basis $\basis$. However, for the $\HFMN$ and
$\PMMN$ models, due to the simpler
realizability conditions, limiters that are both faster and easier to implement can be used.

\subsubsection{\texorpdfstring{$\HFMN$ models}{HFMN models}}
For the hat functions the numerically realizable set and the realizable set agree for suitable
quadratures \cite{SchneiderLeibner2020}. As a consequence, we can use a limiter based on the
analytical
realizability conditions which only require component-wise positivity (see
\cref{sec:realizability}). We thus calculate the
limiter
variable $\limitervariable$ (limiting to $\RQeps{\hfbasis})$ as
\begin{align}\label{eq:positivitylimiter}
  \limitervariable_{\basisindex} & =
  \begin{cases}
    1 & \text{if } \momentcompcellmean{\basisindex} < \limitereps \\
    \frac{\limitereps-
      \momentcomp{\basisindex}}{\momentcompcellmean{\basisindex}-
      \momentcomp{\basisindex}}
      & \text{else if }
    \frac{\limitereps-
      \momentcomp{\basisindex}}{\momentcompcellmean{\basisindex}-
    \momentcomp{\basisindex}} \in [0,1]                           \\
    0 & \text{else}
  \end{cases},\qquad\qquad
  \limitervariable = \max_i \limitervariable_{\basisindex}.
\end{align}

\subsubsection{\texorpdfstring{$\PMMN$ models}{PMMN models}}\label{sec:limitingpmmn}

In one dimension, $\RQ{\pmbasis} = \RD{\pmbasis}{}$ for suitable quadratures, so
a limiter based on the analytical realizability conditions \eqref{eq:pmrealizabilityconds}
can be used. We use a limiter variable $\limitervariable_{\cellindex}$ per interval
$\cell{\cellindex} =
  [\SCheight_{\cellindex}, \SCheight_{\cellindex+1}]$. If we require a distance of at least $\limitereps$
to the boundary, the realizability conditions \eqref{eq:pmrealizabilityconds} become
\begin{equation}\label{eq:realizabilitylimitingpm1deps}
  \momentcomp{0,\cellindex} \geq \limitereps \quand
  \SCheight_{\cellindex}{\momentcomp{0,\cellindex}} + \limitereps
  \sqrt{\SCheight_{\cellindex}^2+1} \leq \momentcomp{1,\cellindex} \leq
  \SCheight_{\cellindex+1}{\momentcomp{0,\cellindex}} - \limitereps
  \sqrt{\SCheight_{\cellindex+1}^2+1}.
\end{equation}
If $\momentscellmean{\cell{\cellindex}}$ is already not $\limitereps$-realizable, we
disable reconstruction for that interval. This results in the following limiter for one-dimensional
partial moments
\begin{align}\label{eq:dglimiter1d}
  \limitervariable_{\cell{\cellindex}}
   & =
  \begin{cases}
    1       & \text{if } \momentscellmean{\cell{\cellindex}} \text{ does not fulfill }
    \eqref{eq:realizabilitylimitingpm1deps}                                            \\
    \max\left(\limitervariable_{\cell{\cellindex}}^0,
    \limitervariable_{\cell{\cellindex}}^1,
    \limitervariable_{\cell{\cellindex}}^2
    \right) & \text{else}
  \end{cases}
\end{align}
where
\begin{align*}
  \limitervariable_{\cell{\cellindex}}^0
   & =
  \begin{cases}
    \frac{\limitereps-\momentcomp{\cellindex,0}}{
      \momentcompcellmean{\cellindex,0}-\momentcomp{\cellindex,0}}
      & \text{if }
    \frac{\limitereps-\momentcomp{\cellindex,0}}{
    \momentcompcellmean{\cellindex,0}-\momentcomp{\cellindex,0}} \in [0,1] \\
    0 & \text{else}
  \end{cases} \\
  \limitervariable_{\cell{\cellindex}}^1
   & =
  \begin{cases}
    \frac{\momentcomp{\cellindex,0}
      \SCheight_{\cellindex} - \momentcomp{\cellindex,1} + \limitereps
      \sqrt{\SCheight_{\cellindex}^2+1}}{
      (\momentcompcellmean{\cellindex,1}-\momentcomp{\cellindex,1})
      -(\momentcompcellmean{\cellindex,0}-\momentcomp{\cellindex,0})
      \SCheight_{\cellindex}}
      & \text{if }
    \frac{\momentcomp{\cellindex,0}
      \SCheight_{\cellindex} - \momentcomp{\cellindex,1} + \limitereps
      \sqrt{\SCheight_{\cellindex}^2+1}}{
      (\momentcompcellmean{\cellindex,1}-\momentcomp{\cellindex,1})
      -(\momentcompcellmean{\cellindex,0}-\momentcomp{\cellindex,0})
    \SCheight_{\cellindex}} \in [0,1] \\
    0 & \text{else}
  \end{cases} \\
  \limitervariable_{\cell{\cellindex}}^2
   & =
  \begin{cases}
    \frac{\momentcomp{\cellindex,0}
      \SCheight_{\cellindex+1} - \momentcomp{\cellindex,1} - \limitereps
      \sqrt{\SCheight_{\cellindex+1}^2+1}}{
      (\momentcompcellmean{\cellindex,1}-\momentcomp{\cellindex,1})
      -(\momentcompcellmean{\cellindex,0}-\momentcomp{\cellindex,0})
      \SCheight_{\cellindex+1}}
      & \text{if }
    \frac{\momentcomp{\cellindex,0}
      \SCheight_{\cellindex+1} - \momentcomp{\cellindex,1} - \limitereps
      \sqrt{\SCheight_{\cellindex+1}^2+1}}{
      (\momentcompcellmean{\cellindex,1}-\momentcomp{\cellindex,1})
      -(\momentcompcellmean{\cellindex,0}-\momentcomp{\cellindex,0})
    \SCheight_{\cellindex+1}} \in [0,1] \\
    0 & \text{else}
  \end{cases}
\end{align*}
For the partial moment basis in three dimensions, the analytical and numerical
realizable set differ. However, note that \eqref{eq:convexhullrealizablesetless1} holds separately
for each spherical triangle (see \cite[Lemma 5.13]{SchneiderLeibner2020}), so we can explicitly
calculate the half space representation \eqref{eq:halfspacerealizableset} for each
spherical triangle.
Instead of calculating a convex hull in $\momentnumber$ dimensions, as would be needed
for the full moment models, we only
have to calculate
$\frac{\momentnumber}{4}$ convex hulls in $4$ dimensions, which is considerably
faster and usually finished within a few seconds in our implementation (remember that this
calculation has to be done only once before the time stepping).

\subsection{Implementation of quadrature rules}
\label{sec:QuadratureRules}
In one dimension, we use Gauss-Lobatto quadratures on each interval. These
quadratures include the endpoints of the interval, which ensures that the
numerically realizable set (see \eqref{eq:numericallyrealizableset2}) equals the
analytically realizable set for hat functions and partial moments, see
\cite{SchneiderLeibner2020}.
To choose a suitable quadrature order,
we solved some of our numerical test
cases for different quadrature orders and calculated the errors with respect to
the reference solution (see Section S2 in the supplementary materials).
As suggested by this analysis,
for the first-order models, we use a quadrature of order 15
per interval of the partition \(\generalpartition\).
For the full moment \(\MN\) models, we split
the domain in the two intervals \([-1,0]\) and
\([0,1]\) that
are needed for calculation of the kinetic flux and
use a quadrature of order \(2\momentorder+40\) on each interval.

In three dimensions, for partial moments and hatfunctions, we are using Fekete
quadratures \cite{Taylor2000} (from the \texttt{TRIANGLE\_FEKETE\_RULE} library
\cite{jburkardtfekete}) mapped to the spherical triangles. The library provides
seven Fekete quadratures of order \(3\), \(6\), \(9\), \(12\), \(12\), \(15\) and \(18\),
using \(10\), \(28\), \(55\),
\(91\), \(91\), \(136\), \(190\) quadrature points, respectively. The second rule of
order \(12\) contains some negative quadrature weights, so we do not use
that quadrature. If we want to improve the approximation, we subdivide each
spherical triangle in several smaller ones as in \cite{boal2004adaptive} and use
the mapped Fekete quadrature on each subtriangle. The Fekete rules correspond to
Gauss-Lobatto rules on the triangle edges and thus also include the vertices of
each triangle \cite{Taylor2000}, which simplifies the realizability
preservation (see \cite{SchneiderLeibner2020}).
As suggested by our quadrature sensitivity analysis (see Section S2),
we use a quadrature order of \(15\) for the \(\HFMN[6]\) and
\(\PMMN[32]\)
models and a quadrature order of \(9\) for the other \(\HFMN\) and \(\PMMN\)
models. For the \(\MN\) models, we use tensor-product
quadrature rules of order \(2\momentorder+8\) on the octants of the sphere.

For the hat function basis in one dimension, we alternatively explicitly calculate all
integrals needed in the Newton algorithm using the analytical formulas
and Taylor expansion at the numerical singularities of the analytical formulas
(see \cite[Appendix A1]{Leibner2021} for the explicit formulas).
The
Taylor expansion is performed in a neighborhood of radius $0.1$ around the
singularity, up to vanishing remainder or a maximal order of \(200\). This
completely removes the need for quadrature rules. Note that the same approach
could be used for the partial moments in one dimension. However, as the
quadrature-based adaptive-change-of-basis algorithm is very efficient for
partial moments, we did not implement the analytical formulas for partial
moments.

For the hat function basis in three dimensions, integrals cannot be evaluated analytically anymore.
We experimented with an approach where the integrals are expanded in a Taylor series representation
(see Section S1 in the supplementary materials). However, it turned out that the Taylor series had
to
be computed up to a prohibitively high order. For this reason, we dismissed that approach
and use the Fekete quadrature approach described above also for the
hat functions in three dimensions.

\subsection{Implementation of initial and boundary conditions}\label{sec:initialandboundaryfvscheme}
The initial values for the finite volume scheme
are computed by integration of the kinetic equation's initial values \eqref{eq:kineticequationinitial}.
\begin{equation*}
  \moments[\gridindex]^{0} =
  \frac{1}{\gridwidthz}\int\limits_{\z_{\cellindex-\frac12}}^{\z_{\cellindex+\frac12}}
  \ints{\distributiontzero(\spatialvar,\SC) \basis} \mathrm{d}\spatialvar
\end{equation*}
Since the initial values in our test cases are isotropic (see \cref{sec:results}), i.e.\
\(\distributiontzero(\spatialvar,\SC) = \distributiontzero(\spatialvar)\), we only have
to compute the velocity integral of the basis \(\ints{\basis}\). For this integral, we
use the
same quadratures as in \cref{sec:QuadratureRules} to ensure that the result is numerically
realizable.
Except for the plane-source and point-source tests, the initial values are constant in each grid
cell,
so we use the midpoint quadrature to evaluate the spatial integral.
For the plane-source test, we always use an even number of grid cells and distribute
the Dirac delta at \(\spatialvar=0\) into the two adjacent grid cells, i.e.\ the initial
value in these grid cells is set
to the constant \(\distributiontzero(\spatialvar) = \distributionvacuum + \frac{1}{2\gridwidth}\).
For the point-source test, we use a Gauss-Legendre tensor product quadrature of order 20 to
evaluate the spatial integrals for the initial values.

Boundary conditions for the moment equations are implemented
by replacing the ansatz function
\(\ansatz[{\moments[\gridindexalt]}]\) belonging to a grid cell \({\gridindexalt}\)
outside of the computational domain
(such cells often called ``ghost cells'') by the boundary
condition \(\distributionboundary\) of the kinetic equation \eqref{eq:kineticequationboundary}
in the computation of the kinetic flux \(\eqref{eq:kineticFlux}\).

\section{Numerical results}\label{sec:results}
We want to apply our moment models to several test cases in the one- and
three-dimensional setting.
We follow the FAIR guiding principles for scientific research \cite{Wilkinson2016} and publish
the
code that generates the following results in \cite{SchneiderLeibner2019DataversePart2}.
As already mentioned (see \cref{section:ImplementationDetails}), the scheme was implemented
in the \texttt{DUNE} generic discretization toolbox
\texttt{dune-gdt}~\cite{Schindler2017}.
The computations were done on a varying number of nodes of a distributed
memory computer cluster\footnote{Each node encloses two Intel
  Intel Skylake Xeon Gold 6140 CPUs (\(2 \times 18\) cores) and \(92\)GB RAM.}.
Communication between nodes was done via MPI (Message Passing Interface)~\cite{MPIF2015}.
On each node, we used a work-stealing task-based shared-memory parallelization
which was implemented using Intel \texttt{TBB}~\cite{Intel2020}.

\subsection{Slab geometry (1D)}
\subsubsection{Plane source}
\label{sec:Planesource}
In this test case an isotropic distribution with all mass concentrated in the middle of an infinite
domain $\z \in
  (-\infty, \infty)$ is defined as initial condition, i.e.
\begin{equation*}
  \distributiontzero(\z, \SCheight) = \distributionvacuum + \delta(\z),
\end{equation*}
where the small parameter $\distributionvacuum = 0.5 \cdot 10^{-8}$ is used to
approximate a vacuum.
In practice, a bounded domain must be used which is large
enough that the boundary should have only negligible effects on the
solution. For the final time $\tf = 1$, the domain is set to $\domain = [-1.2, 1.2]$
(recall that for all presented models the maximal speed of propagation is bounded in absolute value
by one).

At the boundary the vacuum approximation
\begin{equation*}
  \distributionboundary(\timevar,\zL,\SCheight) \equiv \distributionvacuum \quand
  \distributionboundary(\timevar,\zR,\SCheight) \equiv \distributionvacuum
\end{equation*}
is used again. Furthermore, the physical coefficients are set to
$\scattering \equiv 1$, $\absorption \equiv 0$ and $\source \equiv 0$.

All solutions are computed with an even number of cells, so the initial Dirac
delta lies on a cell boundary. Therefore it is approximated by splitting it into
the cells immediately to the left and right. In all figures below, only positive
$\z$ are shown since the solutions are always symmetric around
$\z = 0$.

Note that since the method of moments is indeed a type of spectral method, it
can be expected that due to the non-smoothness of the initial condition the
convergence towards the kinetic solution of this test case is slow (note that
$\distributiontzero(\cdot, \SCheight)\notin\Lp{p}$ for any $p$).
Nevertheless, it is an often-used benchmark revealing many properties of a
moment model (see e.g.\ \cite{Garrett2014}).

Some exemplary solutions at the final time are shown in
\cref{fig:Planesource1,fig:Planesource2,fig:Planesource3}.
Remember that the full-moment models are indexed by the basis order \(\momentorder\)
while the piecewise linear bases are indexed by
the number of moments \(\momentnumber\).
Since, in one dimension, a basis of the space of polynomials
up to order \(\momentorder\) has \(\momentnumber = \momentorder + 1\)
elements, we compare the \(\PN\) and \(\MN\) models to the piecewise linear models
with \(\momentorder + 1\) moments.

As expected, there are strong oscillations
about the reference solution (the analytical solution from~\cite{Ganapol2001}) for all tested
models.
With increasing number of moments,
the number of peaks increases while their height decreases.
The oscillations are considerably stronger for the linear models
than for the corresponding minimum-entropy models.
The \(\MN\) models are closest to the reference solution, particularly for the low-order models.
\begin{figure}
  \centering
  \externaltikz{Planesource1}{\input{Images/PlanesourceIsotropicCutsFull}}
  \caption[Local particle density in the plane-source test for full-moment models]{Local particle
    density \(\density\) in the plane-source test case at time \(\tf = 1\) for different orders of the
    full-moment models.}\label{fig:Planesource1}
\end{figure}
\begin{figure}
  \centering
  \externaltikz{Planesource2}{\input{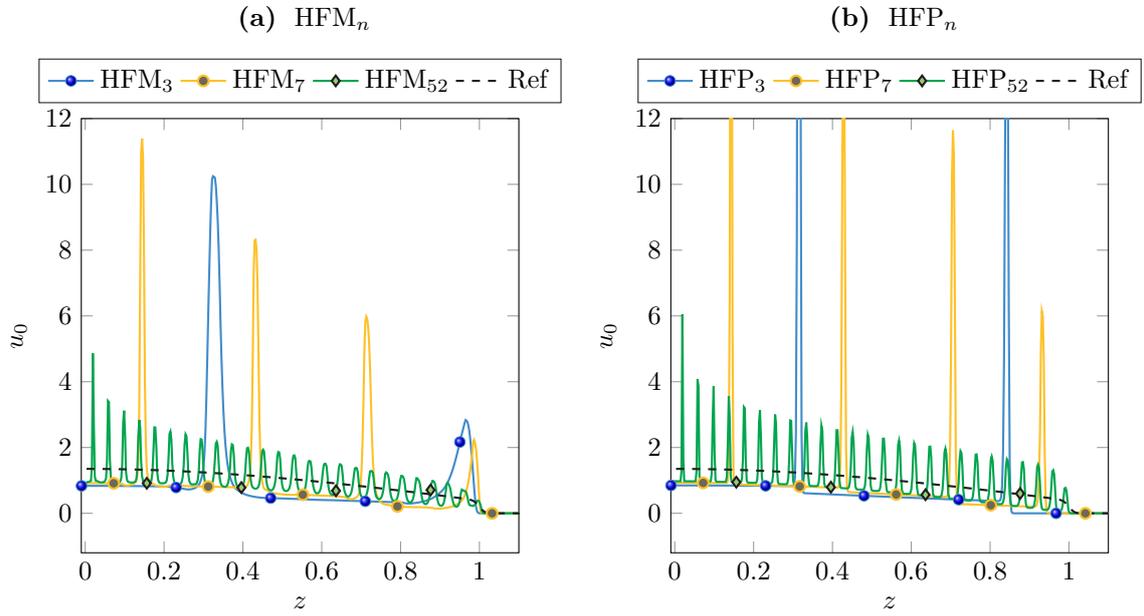}}
  \caption[Local particle density in the plane-source test for hat function moment models]{Local particle
    density \(\density\) in the plane-source test case at time \(\tf = 1\) for different orders of the
    hat function moment models.}\label{fig:Planesource2}
\end{figure}
\begin{figure}
  \centering
  \externaltikz{Planesource3}{\input{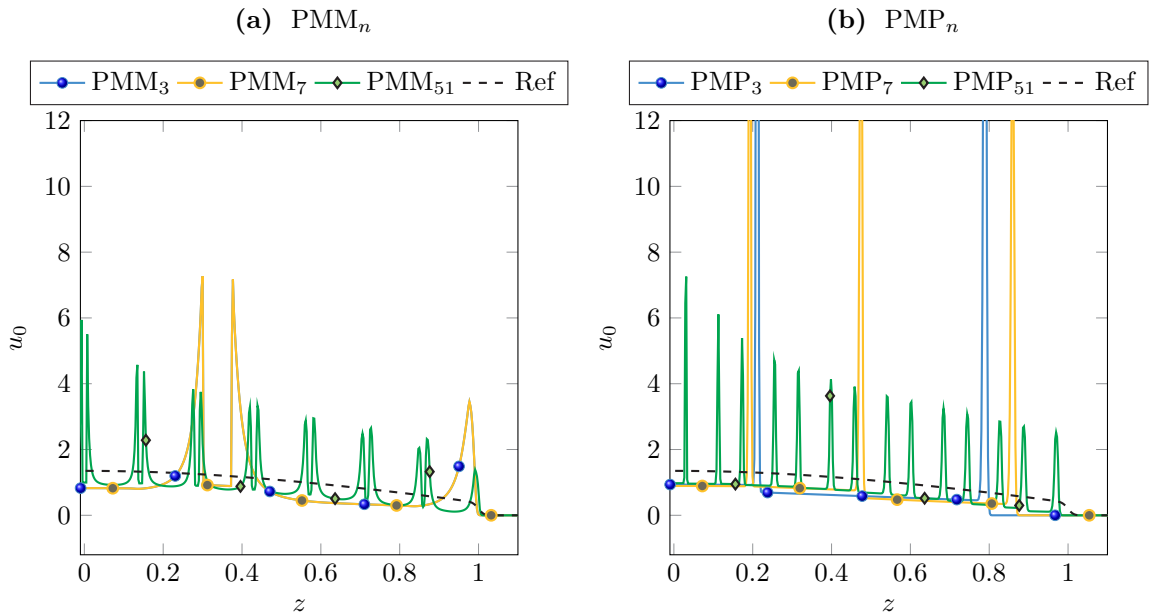}}
  \caption[Local particle density in the plane-source test for partial-moment models]{Local particle
    density \(\density\) in the plane-source test case at time \(\tf = 1\)
    for different orders of the partial-moment models.}\label{fig:Planesource3}
\end{figure}
\begin{figure}
  \centering
  \externaltikz{PlanesourceError}{\input{Images/PlanesourceError}}
  \caption{Convergence of the local particle density $\density$ in the
    plane-source test case for different models.}
  \label{fig:PlanesourceError}
\end{figure}

This is also reflected in the convergence results, which can be found in
\cref{fig:PlanesourceError}. Depicted are the \(\Lp{1}\) and
\(\Lp{\infty}\) errors
between the local particle densities \(\density(\moments)\) of the moment
models and the analytic reference solution from~\cite{Ganapol2001} at the final time \(\tf\).
As expected, overall convergence is slow. The \(\HFPN\), \(\PMPN\) and \(\PN\) models
show very similar \(\Lp{1}\) errors at all orders.
With respect to \(\Lp{\infty}\) norm, the \(\PMPN\) models are slightly better than the
\(\PN\) models. As observed before for the \(\PN\) models~\cite{Schneider2016b,Schneider2016},
\(\HFPN\) models with odd \(n\) show a higher
\(\Lp{\infty}\) error than models with even \(n\) due to a zero eigenvalue of the flux
jacobian. A similar but much less pronounced behaviour can be seen for the
\(\PMPN\) models (for odd and even number of intervals \(\frac{\momentnumber}{2}\)). For odd
\(\momentnumber\),
\(\HFPN\) \(\Lp{\infty}\) errors are close to the corresponding \(\PN\) error. For even
\(\momentnumber\)
the \(\HFPN\) models perform better than the \(\PN\) models and similar to the
\(\PMPN\) model.

The entropy-based \(\HFMN\), \(\PMMN\) and \(\Mn\) models show lower errors than
their linear counterparts both in \(\Lp{1}\) and \(\Lp{\infty}\) norm.
The \(\PMMN\) models
perform slightly better than \(\HFMN\) models of the same order. The difference
between odd and even orders/number of intervals is much more pronounced than for
the \(\HFPN\) and \(\PMPN\) models. The \(\Mn\) models give the lowest errors of all
tested models. However, the errors are still high and the rate of convergence
is equally bad for all models although the convergence rate seems to
improve with higher orders, especially for the minimum-entropy-based models.

\subsubsection{Source beam}
\label{sec:SourceBeam}
The discontinuous version of the source-beam problem from
\cite{Hauck2013} is presented.
The spatial domain is $\domain = [0,3]$, and
\begin{gather*}
  \absorption(\z) = \begin{cases}
    1 & \text{ if } \z\leq 2, \\
    0 & \text{ else},
  \end{cases} \quad
  \scattering(\z) = \begin{cases}
    0  & \text{ if } \z\leq 1,   \\
    2  & \text{ if } 1<\z\leq 2, \\
    10 & \text{ else}
  \end{cases} \quad
  \source(\z) = \begin{cases}
    \frac12 & \text{ if } 1\leq \z\leq 1.5, \\
    0       & \text{ else},
  \end{cases}
\end{gather*}
with initial and boundary conditions
\begin{gather*}
  \distributiontzero(\z, \SCheight) \equiv \distributionvacuum, \\
  \distributionboundary(\timevar,\zL,\SCheight) = \cfrac{e^{-10^5(\SCheight-1)^2}}{\ints{e^{-10^5(\SCheight-1)^2}}}
  \quand
  \distributionboundary(\timevar,\zR,\SCheight) \equiv \distributionvacuum.
\end{gather*}
The final time is $\tf = 2.5$ and the same vacuum approximation $\distributionvacuum$
as in the plane-source problem is used.

\begin{figure}[htbp]
  \centering
  \externaltikz{SourceBeamError}{\input{Images/SourceBeamError}}
  \caption{Convergence of the local particle density $\density$ in the source beam test case for
    different models.}
  \label{fig:SourceBeamError}
\end{figure}

Convergence results can be found in \cref{fig:SourceBeamError}.
The reference solution for this test case is computed from a direct finite difference
discretization of the kinetic
equation on a grid with \(21000\times14000\) elements.
As expected due to the higher regularity of the test case\footnote{\(\distribution\) is ``only'' a discontinuous function
  compared the distributional setting in the plane-source test.}, the convergence
for all tested
models is much better than in the plane-source test.
As a consequence, most of the models are relatively
close to the reference solution which is why we do not show exemplary solutions plots.
The
piecewise linear models and the \(\MN\) models converge with second and first
order in \(\Lp{1}\) and \(\Lp{\infty}\) norm, respectively.
The \(\PN\) models show spectral convergence which
is reflected in comparatively high errors and a slow rate of convergence for the lower-order models
and then a rapid convergence for the high-order models.
Despite the eventual high rate of convergence, for the maximal moment number
\(\momentnumber=50\) regarded here the \(\PN\) models are
mostly outperformed by the other models.
Again, the minimum-entropy-based models perform better than their linear
counterparts although the rate of convergence is the same.
In \(\Lp{\infty}\) norm, the \(\MN\), partial-moment and \(\HFPN\)
models again show a zig-zag pattern where, e.g.,
the \(\HFPN\) models using an odd number
of intervals (even number of moments \(\momentnumber\)) perform better
than the ones using an even interval number (odd \(\momentnumber\)).
For the partial-moment models, an even number of intervals gives better results.
Notably, the \(\HFMN\) do not show such an alternating behaviour. \enlargethispage{1\baselineskip}

\subsection{Three dimensions}
We now consider numerical results in three spatial dimensions with velocities on the unit sphere.
\subsubsection{Point source}
The point-source test is the three-dimensional analogue of the plane-source test
(\secref{sec:Planesource}) in slab geometry. Due to the limitations in the
resolution we use a smoothed version of the initial Dirac delta:
\begin{equation*}
  \distributiontzero(\spatialvar, \SC) = \distributionvacuum +
  \frac{1}{4\pi^4\sigma^3}\exp\left(-\frac{\abs{\spatialvar}^2}{\pi\sigma^2}
  \right),
\end{equation*}
where $\sigma = 0.03$, $\distributionvacuum = \frac{10^{-8}}{4\pi}$. As before, we choose
$\scattering\equiv 1$, $\absorption\equiv 0$ and $\source \equiv 0$.
All models are calculated on $\domain = [-1,1]^3$ to the final time $\tf = 0.75$. The
grid size is
chosen to be $\gridwidthx=\gridwidthy=\gridwidthz = 0.02.$
The point-source test is well-suited to demonstrate symmetries (or symmetry breaks) appearing in
the
solution. We show some selected models in Figures \ref{fig:Pointsource1} and
\ref{fig:Pointsource2},
where we use the endcap geometry $[0,1]\times[-1,1]\times[-1,1]$ for the isosurfaces.
\begin{figure}
  \centering
  \externaltikz{Pointsource1}{\input{Images/Pointsource1}}
  \caption{Two-dimensional cuts and selected isosurfaces for some models in the point-source test.}
  \label{fig:Pointsource1}
\end{figure}
\begin{figure}
  \centering
  \externaltikz{Pointsource2}{\input{Images/Pointsource2}}
  \caption{Two-dimensional cuts and selected isosurfaces for some models in the point-source test.}
  \label{fig:Pointsource2}
\end{figure}

The reference solution itself is rotationally symmetric and can be computed analytically using the
formulas by Ganapol \cite{Ganapol2001}. It can be observed that the hat functions have a
preferred directions of propagation, directly related to the position of the vertices in the
spherical triangulation (e.g., the octahedron that defines the $\HFMN[6]$ basis can be
easily identified in \figref{fig:Pointsource1}). Similar effects occur for the partial moments.
However, the discontinuity of their basis is also reflected in the peaks along the boundaries of
the spherical triangles (compare $\PMMN[32]$). In contrast to this, the full-moment
models preserve the rotational symmetry (compare $\Mn[3]$, where small irregularities
in the solution arise due to the spherical quadrature rule) but adding more waves to the solution.

Finally, we show error plots for our models in \figref{fig:PointsourceError}. The models show the
expected slow convergence in the $\Lp{1}$-norm, similar to the plane-source test
(\secref{sec:Planesource}). All first-order models show roughly order $\frac12$,
whereas the full-moment models have varying convergence rates. In the $\Lp{\infty}$-norm,
the first-order models show order $1$ convergence in the beginning, which then
slows down to order $\frac12$ as well. The full-moment models are showing no (or very
slow) convergence, which is the well-known Gibbs phenomenon.

Note that the \(\PMMN\) models clearly outperform the other methods. In particular,
they are as good as or even slightly better
than the \(\Mn\) models whose calculation is significantly more
expensive for the same degrees of freedom (see \cref{sec:Timings}).

\begin{figure}
  \centering
  \externaltikz{PointsourceError}{\input{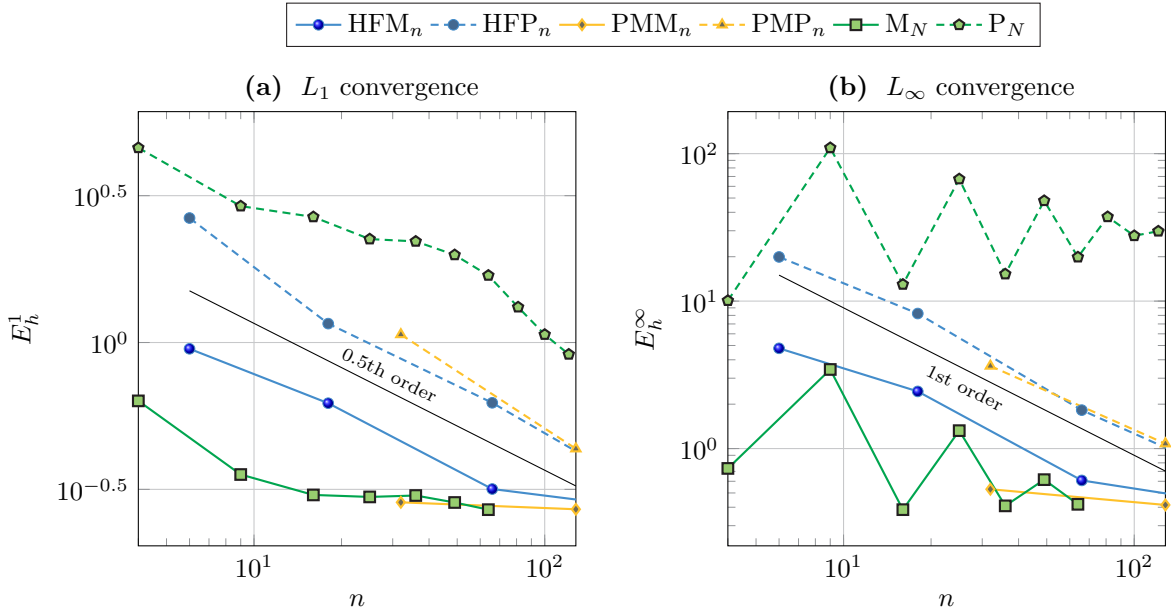}}
  \caption{Convergence of the local particle density $\density$ in the
    point-source test for different models.}
  \label{fig:PointsourceError}
\end{figure}

\subsubsection{Checkerboard}
The checkerboard test case is a lattice problem which is loosely based on a part of a reactor core
\cite{Brunner2005b}. We extend it in a straightforward manner to the three-dimensional case.
The used geometry is shown in \figref{fig:CheckerboardGeometry}. There are scattering (orange and green) and
highly absorbing (black) regions. The parameters are chosen to be the following.
\begin{itemize}
  \item Domain: $\domain = [0,7]^3$, subdivided into the three regimes
        \begin{align*}
          \domain_a         & = \Set*{\spatialvar = (\x,\y,\z)^T \in[1,6]^3 \given \begin{aligned}        & (\floor{\x}+\floor{\y}+\floor{\z})\bmod 2=1,              \\
                       & \spatialvar\notin [3,4]^3\cup [3,4]\times[5,6]\times[3,4]\end{aligned}} , \\
          \domain_{s}       & = \domain\setminus\domain_a,                                                       \\
          \domain_{\source} & = [3,4]^3,
        \end{align*}
  \item Final time: $\tf = 3.2$,
  \item Parameters (compare \figref{fig:CheckerboardGeometry}):
        \begin{align*}
          \scattering(\spatialvar) = \begin{cases}
            1 & \text{ if } \spatialvar\in\domain_s, \\
            0 & \text{ else},
          \end{cases},~
          \absorption(\spatialvar) = \begin{cases}
            0  & \text{ if } \spatialvar\in\domain_s, \\
            10 & \text{ else},
          \end{cases},~
          \source(\spatialvar) = \begin{cases}
            \frac{1}{4\pi} & \text{ if } \spatialvar\in \domain_{\source}, \\
            0              & \text{ else}.
          \end{cases}
        \end{align*}
  \item Initial condition: $\distributiontzero(\spatialvar,\SC) = \distributionvacuum := \cfrac{10^{-8}}{4\pi}$ (approx. vacuum),
  \item Boundary conditions: $\distributionboundary(\timevar,\spatialvar,\SC) = \distributionvacuum$.
\end{itemize}

\begin{figure}
  \centering
  \externaltikz{CheckerboardGeometry}{
    \input{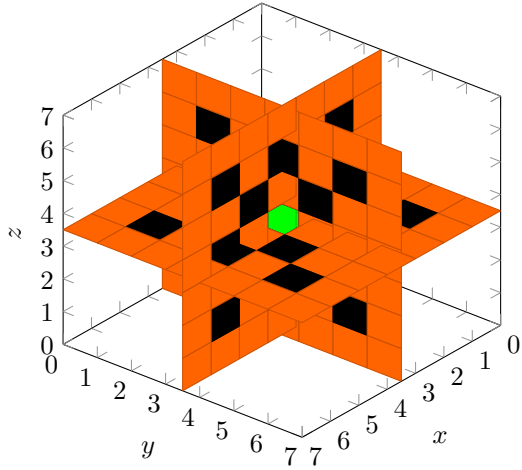}
  }
  \caption{Geometry of the checkerboard test case. Orange and green spots are scattering, black spots are absorbing. The source is located in the green spot.}
  \label{fig:CheckerboardGeometry}
\end{figure}
Due to the discontinuous nature of the physical parameters, this test case is a
challenging task for a numerical solver. We align our grid with the
discontinuities of the parameters by using a multiple of $7$ (usually
$70$)
regularly spaced grid points in each direction.

Solution plots for selected models can be found in the supplementary materials (Figures
S3.1--S3.3).
The \(\PMPN[32]\) model and the \(\PN\) models
of order \(\momentorder\in\{2,\ldots,9\}\) (only \(\momentorder=9\) shown) have negative particle
densities \(\density\).
Surprisingly, the hat function basis \(\HFPN\) has positive densities for all \(\momentnumber\)
that we calculated.

We compare our models to a discrete ordinate implementation~\cite{larsen2010advances,Garrett2014} of second
order. \(\Lp{1}\)- and
\(\Lp{\infty}\)-errors of the local particle density \(\density\) can be found
in \cref{fig:CheckerboardError}. All tested models converge with about first order both in
\(\Lp{1}\) and \(\Lp{\infty}\) norm. Again, \(\PMMN\) and \(\HFMN\)
models are comparable to the \(\Mn\)
models.
\begin{figure}
  \centering
  \externaltikz{CheckerboardError}{\input{Images/CheckerboardError}}
  \caption{Convergence of the local particle density $\density$ in the
    checkerboard test case for different models.}
  \label{fig:CheckerboardError}
\end{figure}

\subsubsection{Shadow}
The shadow test case represents a particle stream that is partially blocked by
an absorber, resulting in a shadowed region behind the absorber. The used
geometry is shown in \figref{fig:ShadowGeometry}.
\begin{figure}[htbp]
  \centering
  \externaltikz{ShadowGeometry}{
    \input{Images/ShadowGeometry.tex}
  }
  \caption{Setup of the shadow test. The absorbing region is depicted in
    black.}
  \label{fig:ShadowGeometry}
\end{figure}
The parameters are chosen to
be the following.
\begin{itemize}
  \item Domain: $\domain = [0,12]\times[0,4]\times[0,3]$
  \item Final time: $\tf = 20$,
  \item Parameters:
        \begin{align*}
          \scattering(\spatialvar) & = \source(\spatialvar) = 0   \\
          \absorption(\spatialvar) & = \begin{cases}
            50 & \text{ if } \spatialvar\in[2,3]\times[1,3]\times[0,2] \\
            0  & \text{ else},
          \end{cases}
        \end{align*}
  \item Initial condition: $\distributiontzero(\spatialvar,\SC) =
          \distributionvacuum := \cfrac{10^{-8}}{4\pi}$ (approx. vacuum),
  \item The isotropic particle stream with density $\density = 2$ enters the
        region via the boundary condition at $\x = 0$. At all other boundaries, vacuum
        boundary conditions are used.
        \begin{equation*}
          \distributionboundary(\timevar,\spatialvar,\SC) = \begin{cases}
            \cfrac{2}{4\pi}     & \text{ if } \x = 0 \\
            \distributionvacuum & \text{ else. }
          \end{cases}
        \end{equation*}
\end{itemize}
We show slices and isovalues of several models at the final time in the supplementary materials
(Figures S.4--S.6).
Again, several of the linear models (e.g.\ \(\PMPN[32]\)
or \(\PN[22]\)) show negative values (depicted in red). As in the previous test
case, the partial
moments perform very well. Compare, for example, the
linear \(\PMPN[512]\) model to \(\PN[22]\) (which has roughly the
same number of degrees of freedom). The partial-moment model approximates the reference much
better,
especially in the far field where also the small
oscillations are captured accurately.
A similar tendency can be observed for the hat function model \(\HFPN[258]\).

Both hat function and discontinuous minimum-entropy models show a good approximation of the
absorber (compare \(\PMMN[32]\) and
\(\HFMN[6]\)). However, they are not able to provide a reasonable approximation in
the far field. Further repartitioning of the
sphere yields much better results in this case.

Investigating again the convergence towards the reference solution (see \cref{fig:ShadowError}),
we see that the full-moment models are slightly superior in the beginning, but convergence slows
down for higher \(\momentnumber\). Both \(\HFMN\) as well as \(\PMMN\) show a similar convergence
behaviour. Again, taking running time into account, both models
outperform the classical \(\Mn\) model in
terms
of efficiency (see \cref{sec:Timings}). Note that we only
computed \(\MN\) models up to a order of \(\momentorder = 4\)
since the higher-order models did not finish in
the available computation time. The same is true for the \(\HFMN\)
models with \(\momentnumber > 66\).
Distributing the workload among more nodes of the distributed cluster
did not improve
computation times for these models, which is
probably due to load-balancing issues.
While the task-stealing algorithm (see above)
ensures that the work is distributed evenly on each node,
there is no load-balancing \emph{between} nodes.
In our implementation, the grid is distributed to the nodes using a
decomposition of the domain in connected parts.
Since the optimization problems are particularly challenging
near the absorber (due to low
particle densities and very anisotropic distributions),
the node(s) 
containing the absorber often
need much longer than the other nodes to solve the optimization problems.
Here, an MPI-based load-balancing implementation may be required
which would, however, significantly increase the communication overhead.
As an alternative, in \cite{Leibner2020},
we investigate a numerical scheme that avoids
the non-linear optimization problems and thus does
not show the same load-balancing issues.
\begin{figure}
  \centering
  \externaltikz{ShadowError}{\input{Images/ShadowError}}
  \caption{Convergence of the local particle density $\density$ in the
    shadow test case for different models.}
  \label{fig:ShadowError}
\end{figure}

In conclusion, moment models based on piecewise first-order continuous (\(\HFPN\), \(\HFMN\))
or discontinuous (\(\PMPN\), \(\PMMN\)) basis functions often approximate the true solution as
good as or even better than the standard models (\(\PN\), \(\Mn\)) using polynomial bases.
In contrast to the standard models, however, these models can be implemented
very efficiently. This is especially true for the entropy-based models since
the necessary
realizability limiting can be based on the analytical realizability conditions (see
\cref{sec:QuadratureRules}). In addition, in case of the discontinuous models,
all matrix operations can be performed on small matrix blocks which provides further
performance advantages (also for the \(\PMPN\) models, see \cref{sec:Timings}).

\subsection{Timings}\label{sec:Timings}
Performance measurements can be found in \cref{fig:Timings2}.
The times were measured without parallelization. Displayed times are the minimum of three runs.
Quadratures were chosen as described in \cref{sec:QuadratureRules}. Measurements were done
both for the first-order scheme without linear reconstruction and
for the realizability-preserving second-order scheme (see \cref{sec:scheme}). Profiling
shows that the first-order scheme spends most of the time solving the optimization problems. For
the
second-order scheme, solving the eigen problems for the reconstruction in characteristic
coordinates
also has a large impact on the execution time.
Here, computation times could probably be improved by using a generalized eigen solver which
takes the structure of the Jacobians into account (see \cref{sec:eigenvalueproblems}).
Both the adaptive-change-of-basis
scheme and the eigensolver have third-order complexity. We thus asymptotically expect third-order
complexity in \(\momentnumber\) for both the first-order and the second-order scheme for the
\(\Mn\)
models. For the \(\PMMN\) models, all operations (including the solution of the eigen problems) can
be done
block-wise, so we expect first-order complexity in that case. Regarding only the
optimization problem, the same is true for \(\HFMN\) models as all matrices involved are
tridiagonal
(in slab geometry) or very sparse (in three dimensions). However, the Jacobian of
the flux function is not sparse in general for the \(\HFMN\) models,
so the eigen problems are currently
solved with a standard third-order-complex eigensolver.
These models would particularly benefit from an improved implementation
of the eigensolver which exploits
the fact that the Jacobians are products of two sparse symmetric matrices (see
\cref{sec:eigenvalueproblems}).

In slab geometry, we used a reduced version of the plane-source test case (\(1000\) grid cells,
final
time \(\tf = 0.1\)). For the \(\HFMN\) models, two different implementations were tested: the
backtracking
Newton solver without change of basis (see \cref{sec:adaptivechangeofbasis})
using quadratures to calculate the integrals and the same
backtracking Newton solver where all needed integrals were solved using the analytical formulas
(and
Taylor expansion at the singularities, see \cref{sec:QuadratureRules}). In three dimensions, we used a
reduced version of the point-source test case (\(\gridsize = 10^3\) grid cells, single Runge-Kutta
step).

As can be seen in \cref{fig:Timings2}, for the first-order scheme, results are as expected
except
that the \(\Mn\) models show second-order complexity in slab geometry, probably because the
matrices
are relatively small here and thus the third-order matrix operations do not dominate the execution
time. The \(\HFMN\) implementation using analytic integrals is faster than the quadrature version
but the difference is negligible in practice.
Given that the implementation using analytic integrals is considerably more complex
and that analytic realizability conditions can also be used for the quadrature-based version,
we suggest to generally use a quadrature-based implementation also for the \(\HFMN\) models.
\begin{figure}[htbp]
  \centering
  \externaltikz{Timings}{\input{Images/Timings}}
  \caption{Execution wall time for the minimum-entropy models in one and
    three dimensions. Times were measured in serial computations (no
    parallelization). Plotted is the minimum of three runs. a) Plane-source test case
    ($1000$ grid cells, $\tf = 0.1$). b) Point-source test case ($10^3$ grid
    cells,
    single Runge-Kutta step).}
  \label{fig:Timings2}
\end{figure}

For the second-order scheme, as expected, the \(\PMMN\) models show first-order complexity both
in slab geometry and in three dimension and thus are several orders of magnitudes faster than the
other models. Curiously, the \(\HFMN\) models are close to second-order complexity also in three
dimensions. For even larger
\(\momentnumber\), we expect the \(\HFMN\) models to also increase with third-order due to the
eigensolver
but the results show that the \(\HFMN\) models are much faster than the \(\Mn\)
models for a long time.\enlargethispage{1\baselineskip}

Note that though the \(\PMMN[2]\) and \(\Mn[2]\) models are
equivalent, the measured times are different
as the \(\Mn[2]\) model uses the convex-hull based realizability limiter while the
\(\PMMN[2]\) model
uses the limiter based on the analytical realizability conditions to be consistent with the models
with higher \(\momentnumber\).

\section{Conclusions and outlook}
We derived two classes of minimum-entropy moment models based on a continuous finite element basis
as well as a discontinuous piece-wise linear basis. Both types of models are realizable, i.e.,
generated by a non-negative ansatz, such that important physical properties like positivity of mass
are preserved. We demonstrated in various numerical tests in one and three dimensional geometry
that
those models are qualitatively competitive with the classical full-moment $\Mn$
models of the same
number of degrees of freedom if the solution of the kinetic equation only has a limited smoothness
(since otherwise the $\Mn$ models typically show spectral convergence).
Additionally, the new models
are much cheaper (with respect to running time) than the full moment models since the non-linear
problems that have to be solved locally
are much smaller and typically much easier to solve as well. Consequently, the new
models are considerably more efficient in the sense that they reach the same
approximation error with much less computation effort.
In particular, the partial moments
$\PMMN$ show a linear relation between wall time and number of moments, which is also
true for the hat function basis if only a first-order scheme is used. If a higher-order
discretization in space and time is required, the partial moments appear to be the model of choice.
However, in some cases the discontinuity in the basis functions may lead to severe problems, for
example when collision is modeled with the Laplace-Beltrami operator \cite{Schneider2014}. In such
cases, $\HFMN$ might be favorable.

We provided a second-order realizability-preserving scheme by using a splitting technique and
analytic
solutions of the stiff part, combined with a realizability-preserving reconstruction scheme.
Higher-order variants of this scheme can in principle be derived similarly, but we emphasize that
we
strictly focused on non-smooth problems, where the sense in applying schemes with (much) more than
second order is questionable.

If the underlying problem admits more smoothness (especially in the velocity domain), higher-order
moment models might be more appropriate to enhance the speed of convergence towards the kinetic
solution. While this is rather straight-forward to define both in slab as well as three-dimensional
geometry (partial moments can be constructed immediately while the hat-function basis can be
extended to higher-order splines on the unit interval/unit sphere,respectively
\cite{alfeld1996bernstein}), special care is required since the realizability conditions are needed
in order to use our realizability-preserving scheme. Up to our knowledge, the corresponding
realizability problems are only solved for partial moments (of arbitrary order) in slab geometry
\cite{Curto1991}, while first approaches are given for second-order partial moments on
quadrants/octants of the sphere~\cite{Schneider2017}.

\bibliographystyle{siam}
\bibliography{bibliography}

\begin{thebibliography}{10}

\bibitem{alfeld1996bernstein}
{\sc P.~Alfeld, M.~Neamtu, and L.~L. Schumaker}, {\em {Bernstein-B{\'{e}}zier
  polynomials on spheres and sphere-like surfaces}}, Computer Aided Geometric
  Design, 13 (1996), pp.~333--349.

\bibitem{Alldredge2015}
{\sc G.~Alldredge and F.~Schneider}, {\em A realizability-preserving
  discontinuous {G}alerkin scheme for entropy-based moment closures for linear
  kinetic equations in one space dimension}, Journal of Computational Physics,
  295 (2015), pp.~665--684.

\bibitem{Alldredge2014}
{\sc G.~W. Alldredge, C.~D. Hauck, D.~P. O'Leary, and A.~L. Tits}, {\em
  Adaptive change of basis in entropy-based moment closures for linear kinetic
  equations}, Journal of Computational Physics, 258 (2014), pp.~489--508.

\bibitem{Alldredge2012}
{\sc G.~W. Alldredge, C.~D. Hauck, and A.~L. Tits}, {\em High-order
  entropy-based closures for linear transport in slab geometry {II}: {A}
  computational study of the optimization problem}, SIAM Journal on Scientific
  Computing, 34 (2012), pp.~B361--B391.

\bibitem{Anderson1999}
{\sc E.~Anderson, Z.~Bai, C.~Bischof, S.~Blackford, J.~Demmel, J.~Dongarra,
  J.~Du~Croz, A.~Greenbaum, S.~Hammarling, A.~McKenney, and D.~Sorensen}, {\em
  {LAPACK} Users' Guide}, Society for Industrial and Applied Mathematics,
  Philadelphia, PA, third~ed., 1999.

\bibitem{Babuska1992}
{\sc I.~Babu{\v{s}}ka and B.~Guo}, {\em {The h, p and h-p version of the finite
  element method; basis theory and applications}}, Advances in Engineering
  Software, 15 (1992), pp.~159--174.

\bibitem{Bastian2008a}
{\sc P.~Bastian, M.~Blatt, A.~Dedner, C.~Engwer, R.~Kl{\"{o}}fkorn,
  R.~Kornhuber, M.~Ohlberger, and O.~Sander}, {\em {A Generic Grid Interface
  for Parallel and Adaptive Scientific Computing. Part II: Implementation and
  Tests in DUNE}}, Computing, 82 (2008), pp.~121--138.

\bibitem{Bastian2008}
{\sc P.~Bastian, M.~Blatt, A.~Dedner, C.~Engwer, R.~Kl{\"{o}}fkorn,
  M.~Ohlberger, and O.~Sander}, {\em {A Generic Grid Interface for Parallel and
  Adaptive Scientific Computing. Part I: Abstract Framework}}, Computing, 82
  (2008), pp.~103--119.

\bibitem{Blanco1997}
{\sc M.~A. Blanco, M.~Fl{\'{o}}rez, and M.~Bermejo}, {\em {Evaluation of the
  rotation matrices in the basis of real spherical harmonics}}, Journal of
  Molecular Structure, 419 (1997), pp.~19--27.

\bibitem{boal2004adaptive}
{\sc N.~Boal and F.-J. Sayas}, {\em {Adaptive numerical integration on
  spherical triangles}}, in Proceedings of VIII International Zaragoza--Pau
  Conference on Applied Mathematics and Statistics (MC L{\{}{\'{o}}{\}}pez de
  Silanes et al, eds). Monograf{\{}$\backslash$i{\}}as Sem. Mat. G Galdeano,
  vol.~31, 2004, pp.~61--69.

\bibitem{Brunner2005b}
{\sc T.~A. Brunner and J.~P. Holloway}, {\em {Two-dimensional time dependent
  Riemann solvers for neutron transport}}, Journal of Computational Physics,
  210 (2005), pp.~386--399.

\bibitem{BunseGerstner1984}
{\sc A.~Bunse-Gerstner}, {\em An algorithm for the symmetric generalized
  eigenvalue problem}, Linear Algebra and its Applications, 58 (1984), pp.~43
  -- 68.

\bibitem{jburkardtfekete}
{\sc J.~Burkardt}, {\em {TRIANGLE\_FEKETE\_RULE}}.
\newblock
  \url{https://people.sc.fsu.edu/~jburkardt/cpp_src/triangle_fekete_rule/triangle_fekete_rule.html},
  2014.

\bibitem{Buss2001}
{\sc S.~R. Buss and J.~P. Fillmore}, {\em {Spherical averages and applications
  to spherical splines and interpolation}}, ACM Transactions on Graphics, 20
  (2001), pp.~95--126.

\bibitem{Chidyagwai2017}
{\sc P.~Chidyagwai, M.~Frank, F.~Schneider, and B.~Seibold}, {\em {A
  Comparative Study of Limiting Strategies in Discontinuous Galerkin Schemes
  for the $M_1$ Model of Radiation Transport}}, Journal of Computational and
  Applied Mathematics, 342 (2018), pp.~399--418.

\bibitem{cravero2018cool}
{\sc I.~Cravero, G.~Puppo, M.~Semplice, and G.~Visconti}, {\em Cool weno
  schemes}, Computers \& Fluids, 169 (2018), pp.~71--86.

\bibitem{Curto1991}
{\sc R.~E. Curto and L.~A. Fialkow}, {\em Recursiveness, positivity, and
  truncated moment problems}, Houston Journal of Mathematics, 17 (1991),
  pp.~603--635.

\bibitem{DubFeu99}
{\sc B.~Dubroca and J.-L. Feugeas}, {\em {Entropic Moment Closure Hierarchy for
  the Radiative Transfer Equation}}, C. R. Acad. Sci. Paris Ser. I, 329 (1999),
  pp.~915--920.

\bibitem{DubKla02}
{\sc B.~Dubroca and A.~Klar}, {\em {Half-moment closure for radiative transfer
  equations}}, Journal of Computational Physics, 180 (2002), pp.~584--596.

\bibitem{Frank2006}
{\sc M.~Frank, B.~Dubroca, and A.~Klar}, {\em {Partial moment entropy
  approximation to radiative heat transfer}}, Journal of Computational Physics,
  218 (2006), pp.~1--18.

\bibitem{Ganapol2001}
{\sc B.~D. Ganapol, R.~S. Baker, J.~A. Dahl, and R.~E. Alcouffe}, {\em
  {Homogeneous infinite media time-dependent analytical benchmarks}}, tech.
  rep., Tech. Rep. LA-UR-01-1854. Los Alamos National Laboratory, 2001.

\bibitem{Garrett2014}
{\sc C.~K. Garrett and C.~D. Hauck}, {\em {A Comparison of Moment Closures for
  Linear Kinetic Transport Equations: The Line Source Benchmark}}, Transport
  Theory and Statistical Physics,  (2013).

\bibitem{Gottlieb2005}
{\sc S.~Gottlieb}, {\em {On High Order Strong Stability Preserving
  Runge–Kutta and Multi Step Time Discretizations}}, Journal of Scientific
  Computing, 25 (2005), pp.~105--128.

\bibitem{Guennebaud2010}
{\sc G.~Guennebaud, B.~Jacob, et~al.}, {\em Eigen v3}.
\newblock http://eigen.tuxfamily.org, 2010.

\bibitem{Hauck2010}
{\sc C.~D. Hauck}, {\em {High-order entropy-based closures for linear transport
  in slab geometry}}, Commun. Math. Sci. v9,  (2010).

\bibitem{Hauck2013}
{\sc C.~D. Hauck, M.~Frank, and E.~Olbrant}, {\em {Perturbed, entropy-based
  closure for radiative transfer}}, SIAM Journal on Applied Mathematics, 6
  (2013).

\bibitem{Intel2020}
{\sc Intel}, {\em Threading building blocks}.

\bibitem{Jiang1995}
{\sc G.~S. Jiang and C.-W. Shu}, {\em {Efficient implementation of weighted ENO
  schemes.}}, Journal of Computational Physics, 228 (1995), pp.~202--228.

\bibitem{Ketcheson2008}
{\sc D.~I. Ketcheson}, {\em {Highly efficient strong stability-preserving
  Runge-Kutta methods with low-storage implementations}}, SIAM Journal on
  Scientific Computing, 30 (2008), pp.~2113--2136.

\bibitem{Langer2006}
{\sc T.~Langer, A.~Belyaev, and H.-P. Seidel}, {\em {Spherical barycentric
  coordinates}}, Proceedings of the fourth Eurographics symposium on Geometry
  processing,  (2006), pp.~81--88.

\bibitem{larsen2010advances}
{\sc E.~W. Larsen and J.~E. Morel}, {\em Advances in discrete-ordinates
  methodology}, in Nuclear Computational Science, Springer, 2010, pp.~1--84.

\bibitem{Leibner2021}
{\sc T.~Leibner}, {\em Model reduction for kinetic equations: moment
  approximations and hierarchical approximate proper orthogonal decomposition},
  PhD thesis, WWU Münster, 2021.

\bibitem{Leibner2020}
{\sc T.~Leibner and M.~Ohlberger}, {\em A new entropy-variable-based
  discretization scheme for minimum entropy moment models for a linear kinetic
  equation}, arXiv,  (2020).

\bibitem{SchneiderLeibner2019DataversePart2}
{\sc T.~Leibner and F.~Schneider}, {\em {Replication Data for: First-order
  con\-tin\-u\-ous and dis\-con\-tin\-u\-ous Galerkin moment models for a
  linear kinetic equation: realizability-preserving splitting scheme and
  numerical analysis}}, Harvard Dataverse,  (2019).

\bibitem{Levermore1996}
{\sc C.~D. Levermore}, {\em Moment closure hierarchies for kinetic theories},
  Journal of Statistical Physics, 83 (1996), pp.~1021--1065.

\bibitem{Lewis1984}
{\sc E.~E. Lewis and W.~F. Miller, Jr.}, {\em {Computational Methods in Neutron
  Transport}}, John Wiley and Sons, New York, 1984.

\bibitem{Martin1971}
{\sc R.~S. Martin and J.~H. Wilkinson}, {\em Reduction of the Symmetric
  Eigenproblem Ax=$\lambda$Bx and Related Problems to Standard Form}, Springer
  Berlin Heidelberg, Berlin, Heidelberg, 1971, pp.~303--314.

\bibitem{MPIF2015}
{\sc {Message Passing Interface Forum}}, {\em {MPI}: A message-passing
  interface standard (version 3.1)}.
\newblock \url{http://www.mpi-forum.org/docs/mpi-3.1/mpi31-report.pdf}, 2015.

\bibitem{Milk2017}
{\sc R.~Milk, F.~Schindler, and T.~Leibner}, {\em {dune-xt}}.
\newblock \url{http://github.com/dune-community/dune-xt-super}, 2017.

\bibitem{Milk2017a}
{\sc R.~Milk, F.~Schindler, and T.~Leibner}, {\em Extending dune: The dune-xt
  modules}, Archive of Numerical Software, 5 (2017), pp.~193--216.

\bibitem{Min78}
{\sc G.~N. Minerbo}, {\em {Maximum entropy Eddington factors}}, J. Quant.
  Spectrosc. Radiat. Transfer, 20 (1978), pp.~541--545.

\bibitem{Olbrant2012}
{\sc E.~Olbrant, C.~D. Hauck, and M.~Frank}, {\em {A realizability-preserving
  discontinuous Galerkin method for the M1 model of radiative transfer}},
  Journal of Computational Physics, 231 (2012), pp.~5612--5639.

\bibitem{Pan1999}
{\sc V.~Y. Pan and Z.~Q. Chen}, {\em The complexity of the matrix
  eigenproblem}, in Proceedings of the Thirty-First Annual ACM Symposium on
  Theory of Computing, STOC '99, New York, NY, USA, 1999, Association for
  Computing Machinery, p.~507–516.

\bibitem{Ritter2016}
{\sc J.~Ritter, A.~Klar, and F.~Schneider}, {\em {Partial-moment
  minimum-entropy models for kinetic chemotaxis equations in one and two
  dimensions}}, Journal of Computational and Applied Mathematics, 306 (2016),
  pp.~300--315.

\bibitem{Rustamov2010}
{\sc R.~M. Rustamov}, {\em {Barycentric coordinates on surfaces}}, Eurographics
  Symposium on Geometry Processing, 29 (2010), pp.~1507--1516.

\bibitem{Schar1996}
{\sc C.~Sch{\"{a}}r and P.~K. Smolarkiewicz}, {\em {A Synchronous and Iterative
  Flux-Correction Formalism for Coupled Transport Equations}}, Journal of
  Computational Physics, 128 (1996), pp.~101--120.

\bibitem{Schindler2017}
{\sc F.~Schindler}, {\em {dune-gdt}}.
\newblock \url{http://github.com/dune-community/dune-gdt}, 2017.

\bibitem{Schneider2016aCodeIMEX}
{\sc F.~Schneider}, {\em {First-order quarter- and mixed-moment realizability
  theory and Kershaw closures for a Fokker-Planck equation in two space
  dimensions: Code}}, 2016.

\bibitem{schneider2016implicit}
\leavevmode\vrule height 2pt depth -1.6pt width 23pt, {\em Implicit-explicit,
  realizability-preserving first-order scheme for moment models with
  lipschitz-continuous source terms}, arXiv:1611.01314,  (2016).

\bibitem{Schneider2016b}
\leavevmode\vrule height 2pt depth -1.6pt width 23pt, {\em {Kershaw closures
  for linear transport equations in slab geometry I: Model derivation}},
  Journal of Computational Physics, 322 (2016), pp.~905--919.

\bibitem{Schneider2016a}
\leavevmode\vrule height 2pt depth -1.6pt width 23pt, {\em Kershaw closures for
  linear transport equations in slab geometry {II}: high-order
  realizability-preserving dis\-con\-tin\-uous-{G}alerkin schemes}, Journal of
  Computational Physics, 322 (2016), pp.~920--935.

\bibitem{Schneider2016}
\leavevmode\vrule height 2pt depth -1.6pt width 23pt, {\em Moment models in
  radiation transport equations}, Verlag Dr. Hut, 2016.

\bibitem{Schneider2014}
{\sc F.~Schneider, G.~W. Alldredge, M.~Frank, and A.~Klar}, {\em {Higher Order
  Mixed-Moment Approximations for the Fokker--Planck Equation in One Space
  Dimension}}, SIAM Journal on Applied Mathematics, 74 (2014), pp.~1087--1114.

\bibitem{Schneider2015b}
{\sc F.~Schneider, G.~W. Alldredge, and J.~Kall}, {\em A
  realizability-preserving high-order kinetic scheme using weno reconstruction
  for entropy-based moment closures of linear kinetic equations in slab
  geometry}, Kinetic \& Related Models, 9 (2016), p.~193.

\bibitem{Schneider2016c}
\leavevmode\vrule height 2pt depth -1.6pt width 23pt, {\em A
  realizability-preserving high-order kinetic scheme using weno reconstruction
  for entropy-based moment closures of linear kinetic equations in slab
  geometry}, Kinetic \& Related Models, 9 (2016), p.~193.

\bibitem{SchneiderLeibner2020}
{\sc F.~Schneider and T.~Leibner}, {\em First-order continuous- and
  discontinuous-galerkin moment models for a linear kinetic equation: Model
  derivation and realizability theory}, Journal of Computational Physics, 416
  (2020), p.~109547.

\bibitem{Schneider2017}
{\sc F.~Schneider, A.~Roth, and J.~Kall}, {\em First-order quarter- and
  mixed-moment realizability theory and {K}ershaw closures for a
  {F}okker-{P}lanck equation in two space dimensions}, Kinetic and Related
  Models, 10 (2017), pp.~1127--1161.

\bibitem{Seibold2014}
{\sc B.~Seibold and M.~Frank}, {\em {StaRMAP---A Second Order Staggered Grid
  Method for Spherical Harmonics Moment Equations of Radiative Transfer}}, ACM
  Transactions on Mathematical Software, 41 (2014), pp.~1--28.

\bibitem{Taylor2000}
{\sc M.~A. Taylor, B.~A. Wingate, and R.~E. Vincent}, {\em {An Algorithm for
  Computing Fekete Points in the Triangle}}, SIAM J. Numer. Anal., 38 (2000),
  pp.~1707--1720.

\bibitem{Titarev2004}
{\sc V.~Titarev and E.~Toro}, {\em Finite-volume weno schemes for
  three-dimensional conservation laws}, Journal of Computational Physics, 201
  (2004), pp.~238 -- 260.

\bibitem{Wilkinson2016}
{\sc M.~D. Wilkinson, M.~Dumontier, I.~J. Aalbersberg, G.~Appleton, M.~Axton,
  A.~Baak, N.~Blomberg, J.-W. Boiten, L.~B. da~Silva~Santos, P.~E. Bourne,
  J.~Bouwman, A.~J. Brookes, T.~Clark, M.~Crosas, I.~Dillo, O.~Dumon,
  S.~Edmunds, C.~T. Evelo, R.~Finkers, A.~Gonzalez-Beltran, A.~J.~G. Gray,
  P.~Groth, C.~Goble, J.~S. Grethe, J.~Heringa, P.~A.~C. 't~Hoen, R.~Hooft,
  T.~Kuhn, R.~Kok, J.~Kok, S.~J. Lusher, M.~E. Martone, A.~Mons, A.~L. Packer,
  B.~Persson, P.~Rocca-Serra, M.~Roos, R.~van Schaik, S.-A. Sansone,
  E.~Schultes, T.~Sengstag, T.~Slater, G.~Strawn, M.~A. Swertz, M.~Thompson,
  J.~van~der Lei, E.~van Mulligen, J.~Velterop, A.~Waagmeester, P.~Wittenburg,
  K.~Wolstencroft, J.~Zhao, and B.~Mons}, {\em The fair guiding principles for
  scientific data management and stewardship}, Scientific Data, 3 (2016).

\bibitem{Zhang2010}
{\sc X.~Zhang and C.~W. Shu}, {\em {On positivity-preserving high order
  discontinuous Galerkin schemes for compressible Euler equations on
  rectangular meshes}}, Journal of Computational Physics, 229 (2010),
  pp.~8918--8934.

\end{thebibliography}

\end{document}


\maketitle

\noindent


\def\tikzpath{Images/}

\setcounter{equation}{0}
\setcounter{figure}{0}

\section{Evaluating the \texorpdfstring{\(\HFMN\)}{hat function} integrals in three dimensions using Taylor series}
For the hat function basis in three dimensions, integrals cannot be evaluated analytically anymore.
However, the integrals can be expanded in a Taylor series representation.
Though we dismissed that approach (since it turned out that the Taylor series had to
be computed up to a prohibitively high order) we describe it here for future reference.

Let
$\sphericaltriangle$ be a spherical triangle with vertices $\vertexA$,
$\vertexB$ and $\vertexC$ on the unit sphere $\mathcal{S}_2$. Let
$\hfbasis[\sphericaltriangle] = (\hfbasiscomp[1], \hfbasiscomp[2],
  \hfbasiscomp[3])^T$ be the barycentric basis functions on $\sphericaltriangle$.
We are interested in the integral
\begin{equation*}
  \int_{\sphericaltriangle} f(\SC) \exp(\hfbasis[\sphericaltriangle](\SC)\cdot\multipliers)~d\SC
\end{equation*}
for some function \(f\).
Since we cannot evaluate this integral analytically, we write the exponential function in its
Taylor series representation.
Let $\SC = (\SCx, \SCy, \SCz)^T$ and $\multipliers = \left(\multiplierscomp{1},\ldots,\multiplierscomp{3}\right)^T$. Using the partition
of unity property of the barycentric basis functions, we get
\begin{align*}
  \exp(\hfbasis[\sphericaltriangle]\cdot\multipliers)
   & = \exp(\sum_{i=1}^3 \hfbasiscomp[i]\multiplierscomp{i})
  = \exp(\hfbasiscomp[1]\multiplierscomp{1}+\hfbasiscomp[2]\multiplierscomp{2}+(1-\hfbasiscomp[1]-\hfbasiscomp[2])\multiplierscomp{3})               \\
   & = e^{\multiplierscomp{3}} e^{\hfbasiscomp[1](\multiplierscomp{1}-\multiplierscomp{3})+\hfbasiscomp[2](\multiplierscomp{2}-\multiplierscomp{3})}
\end{align*}
Expanding the second exponential in a Taylor series representation gives
\begin{equation*}
  \exp(\hfbasis[\sphericaltriangle](\SC)\cdot\multipliers)
  = \exp(\multiplierscomp{3}) \sum_{k=0}^{\infty} \cfrac{\left(\sum_{i=1}^2 \hfbasiscomp[i](\multiplierscomp{i}-\multiplierscomp{3})\right)^k}{k!}
  = \sum_{k=0}^{\infty} \sum_{k_1+k_2=k}\prod_{i=1}^{2}\cfrac{(\hfbasiscomp[i](\multiplierscomp{i}-\multiplierscomp{3}))^{k_i}}{k_i!}
\end{equation*}
where we used the multinomial theorem for the last equality.
Interchanging summation and integration yields
\begin{equation*}
  \int_{\sphericaltriangle} f(\SC) \exp(\hfbasis[\sphericaltriangle](\SC)\cdot\multipliers)~d\SC
  = \exp(\multiplierscomp{3}) \sum_{k=0}^{\infty} \sum_{k_1+k_2=k}
  \left(\prod_{i=1}^{2}\frac{(\multiplierscomp{i}-\multiplierscomp{3})^{k_i}}{k_i!}\right)
  \int_{\sphericaltriangle} f(\SC) \prod_{i=1}^{2} {\hfbasiscomp[i]}^{k_i} ~d\SC
\end{equation*}
The integrals
$\int_{\sphericaltriangle} f(\SC) \prod_{i=1}^{2} {\hfbasiscomp[i]}^{k_i} ~d\SC$
are independent of \(\multipliers\)
can be precomputed once and for all (up to some maximal order), if the
spherical triangle does not change. For the optimization algorithm, we need
to calculate these integrals for
\begin{equation*}
  f(\SC) \in \{1, \hfbasiscomp[i](\SC), {\SC}_k
  \hfbasiscomp[i](\SC), \hfbasiscomp[i](\SC) \hfbasiscomp[j](\SC),
  {\SC}_k \hfbasiscomp[i](\SC) \hfbasiscomp[j](\SC)
  \text{ for } i, j, k \in \{1, 2, 3\}\}.
\end{equation*}
\begin{remark}
  Note that, if the spherical triangle is contained in an octant of the sphere, none of these
  possible choices for $f(\SC)$ changes sign over the
  domain of integration. Thus, if we order the multipliers such that $\multiplierscomp{3} = \min_{i=1}^3 \multiplierscomp{i}$, all terms
  of the Taylor expansion have the same
  sign. Hence, if we precalculate the integrals for all three possible choices of the basis function
  $\hfbasiscomp[3]$, we can calculate the Taylor series without numerical cancelation.
\end{remark}
This procedure allows for using high-order quadratures to precompute the
integrals up to some maximal order (which is reasonably fast even for very fine
quadratures). When solving the optimization problems, we only have to evaluate
the Taylor series, which is considerably faster than using a quadrature of
comparable order. We tested this procedure with a maximal order of $250$ and it
indeed worked quite well for the vast majority of optimization problems.
However, for moments corresponding to anisotropic distributions,
$(\multiplierscomp{i}-\multiplierscomp{3})$ may become arbitrarily large such
that the Taylor series has to evaluated up to a prohibitively high order. This
leads to additional regularization for these moments which introduces additional errors.

\section{Quadrature sensitivity}\label{sec:appendixQuadratures}
For the minimum entropy models, we usually cannot calculate the
integrals occurring in the
minimum-entropy optimization problems analytically but have to use a
quadrature. Choosing an appropriate quadrature is not trivial as it has a great
influence on both realizability and performance. Due to realizability
considerations, in one dimension, we chose
Gauss-Lobatto quadratures. For the $\HFMN$ and $\PMMN$ models, we use
one
quadrature per interval $\cell{\cellindex}$. The $\Mn$ models do not use a
subdivision of the quadrature domain in intervals. However, for the kinetic
flux we integrate over the intervals $[-1, 0]$ and $[0, 1]$, so we use
one
quadrature on each of these two intervals. For the one-dimensional $\PMMN$ and
$\HFMN$ models, the integrals can be solved analytically, which we did for the
$\HFMN$ models in our implementation. However, for the sake of completeness, we
also tested the $\HFMN$ models using quadratures instead of the analytical
formulas.

To test which quadrature order to use and how sensitive the different models
are with respect to the quadrature order, we solved some of our numerical test
cases for different quadrature orders and calculated the errors with respect to
the reference solution. For high quadrature orders, the integrals should be
approximated very good such that the error with respect to the reference
solution should
only be due to the moment approximation, not due to errors in evaluating the
integrals. We thus expect the errors to converge to a model-dependent limit
value with increasing quadrature order.

The results for one dimension can be found in Figures \ref{fig:PMMQuadratures},
\ref{fig:HFMQuadratures} and \ref{fig:LEMQuadratures}. The $\HFMN$ and
$\PMMN$
models show similar behavior, which is expected as they are both first-order
models and thus similar integrals have to be approximated. In the source-beam
test case, both in $\Lp{1}$ and $\Lp{\infty}$ norm, the error mostly reaches
its limit value already for a quadrature order of $5$. The plane-source test
case is more sensitive to badly approximated integrals and needs a quadrature
order of about $11$ to reach its limit. Unsurprisingly, models with fewer
intervals are more sensitive to low-order quadratures. Given these results, we use a quadrature
order of 15 for the $\HFMN$ and $\PMMN$ models in our numerical
tests.

For the $\Mn$ models, obviously, higher-order models need higher-order
quadratures. For the plane-source test case, a quadrature order of $2\momentorder+40$ seems
appropriate. The source-beam test case is a special case due to the approximate
Dirac boundary value. For the $\HFMN$ and $\PMMN$ models, the
boundary value
can be evaluated analytically, yielding a (numerically) realizable moment
vector as the numerically realizable set equals the analytically realizable
set. For the Legendre basis, however, we cannot evaluate the boundary integrals
analytically and we cannot use an arbitrary high-order quadrature for the
boundary-value only as the resulting moment vector might not be numerically
realizable. We thus use the same quadrature to evaluate the boundary value as
we use in the optimization problem. To fully resolve the boundary value, we
have to use a much higher quadrature order than we need for the plane-source
test case. To be on the safe side, we use a quadrature with $100$ quadrature
points per half interval (order $197$) in this test case.

\begin{figure}[htbp]
  \centering
  \externaltikz{1dPmmQuadratures}{\input{Images/1dPmmQuadratures}}
  \caption{Analysis of the quadrature dependency of the $\PMMN$ models.}
  \label{fig:PMMQuadratures}
\end{figure}

\begin{figure}[htbp]
  \centering
  \externaltikz{1dHfmQuadratures}{\input{Images/1dHfmQuadratures}}
  \caption{Analysis of the quadrature dependency of the $\HFMN$ models.}
  \label{fig:HFMQuadratures}
\end{figure}

\begin{figure}[htbp]
  \centering
  \externaltikz{1dLemQuadratures}{\input{Images/1dLemQuadratures}}
  \caption{Analysis of the quadrature dependency of the $\Mn$ models.}
  \label{fig:LEMQuadratures}
\end{figure}

In three dimension, we use Fekete quadratures on each spherical triangle for the
$\HFMN$ and $\PMMN$ models and tensor-product rules on the octants of
the sphere
for the $\Mn$ models. To test the influence
of the quadrature, we solve the pointsource problem (using $50^3$ grid cells)
for each quadrature and calculate the error with respect to the analytical
solution of the kinetic equation. The
results for the $\HFMN$ and $\PMMN$ can be found in
\figref{fig:3dPmmAndHfmQuadratures}. The models with $8$ spherical triangles
($\HFMN[6], \PMMN[32]$) give significantly different results when a low-order
quadrature is used. A quadrature order of about $12$ is needed to fully resolve
the structure in these models. In contrast, the error graphs for the
higher-order $\HFMN$ and $\PMMN$ are mostly flat, so these models do
not profit
from quadratures with degree larger than $6$. Apparently, the finer
triangulation of the quadrature domain in these models is sufficient to properly
approximate the integrals even with low-order quadratures on each triangle. For
the following numerical experiments, we thus use a quadrature order of $15$ for
$\HFMN[6]$ and $\PMMN[32]$ and order $9$ for the other
models.

For the $\Mn$ models, the results can be found in
\figref{fig:3dShmQuadratures}). Obviously, higher-order models need
higher-order quadratures. We use a quadrature order of $2N+8$ in the numerical
experiments.

\begin{figure}
  \centering
  \externaltikz{3dPmmAndHfmQuadratures}{\input{Images/3dPmmAndHfmQuadratures}}
  \caption{$\Lp{1}$ and $\Lp{\infty}$ errors of $\PMMN$ and
    $\HFMN$
    models for different quadratures. A Fekete quadrature rule of the respective
    order is used on each spherical triangle.}
  \label{fig:3dPmmAndHfmQuadratures}
\end{figure}

\begin{figure}
  \centering
  \externaltikz{3dShmQuadratures}{\input{Images/3dShmQuadratures}}
  \caption{Analysis of the quadrature dependency of the $\Mn$ models.}
  \label{fig:3dShmQuadratures}
\end{figure}

\pagebreak
\section{Supplementary figures}
\setcounter{figure}{0}
We here include some plots of the solutions of the Checkerboard and Shadow test cases.
All models are shown as two-dimensional slices through the spatial domain, as well as isosurfaces
in an endcap geometry (i.e., some portion of the surfaces is removed to get some insight into the
interior of the solution).
\begin{figure}[hbp]
  \centering
  \externaltikz{Checkerboard1}{\input{Images/Checkerboard1}}
  \caption{Two-dimensional cuts and selected isosurfaces for some models in the checkerboard
    test, logarithmic scale.}
  \label{fig:Checkerboard1}
\end{figure}

\begin{figure}
  \centering
  \externaltikz{Checkerboard2}{\input{Images/Checkerboard2}}
  \caption{Two-dimensional cuts and selected isosurfaces for some models in the checkerboard
    test, logarithmic scale. Negative values are shown in red.}
  \label{fig:Checkerboard2}
\end{figure}
\begin{figure}
  \centering
  \externaltikz{Checkerboard3}{\input{Images/Checkerboard3}}
  \caption{Two-dimensional cuts and selected isosurfaces for some models in the checkerboard
    test, logarithmic scale. Negative values are shown in red.}
  \label{fig:Checkerboard3}
\end{figure}

\begin{figure}
  \centering
  \externaltikz{Shadow1}{\input{Images/Shadow1}}
  \caption{Two-dimensional cuts and selected isosurfaces for some models in the shadow test, logarithmic scale. Negative values are shown in red.}
  \label{fig:Shadow1}
\end{figure}
\begin{figure}
  \centering
  \externaltikz{Shadow2}{\input{Images/Shadow2}}
  \caption{Two-dimensional cuts and selected isosurfaces for some models in the shadow test, logarithmic scale. Negative values are shown in red.}
  \label{fig:Shadow2}
\end{figure}

\begin{figure}
  \centering
  \externaltikz{Shadow3}{\input{Images/Shadow3}}
  \caption{Two-dimensional cuts and selected isosurfaces for some models in the shadow test, logarithmic scale. Negative values are shown in red.}
  \label{fig:Shadow3}
\end{figure}